\DeclareRobustCommand{\greektext}{%
  \fontencoding{LGR}\selectfont\def\encodingdefault{LGR}}
\DeclareRobustCommand{\textgreek}[1]{\leavevmode{\greektext #1}}
\newcommand{\lyxmathsym}[1]{\ifmmode\begingroup\def\b@ld{bold}
  \text{\ifx\math@version\b@ld\bfseries\fi#1}\endgroup\else#1\fi}
\providecommand{\tabularnewline}{\\}
\numberwithin{equation}{section}
\numberwithin{figure}{section}
\theoremstyle{plain}
\newtheorem{thm}{Theorem}
  \theoremstyle{plain}
  \newtheorem{conjecture}[thm]{Conjecture}
  \theoremstyle{plain}
  \newtheorem{prop}[thm]{Proposition}
  \theoremstyle{remark}
  \newtheorem{rem}[thm]{Remark}
  \theoremstyle{plain}
  \newtheorem{cor}[thm]{Corollary}
 \theoremstyle{definition}
  \newtheorem{example}[thm]{Example}
  \theoremstyle{definition}
  \newtheorem{problem}[thm]{Problem}
  \theoremstyle{plain}
  \newtheorem{lem}[thm]{Lemma}
  \theoremstyle{definition}
  \newtheorem{defn}[thm]{Definition}
\begin{document}

 \def\cA{{\mathcal A}} \def\cB{{\mathcal B}} \def\cC{{\mathcal C}} \def\cD{{\mathcal D}} \def\cE{{\mathcal E}} \def\cF{{\mathcal F}} \def\cG{{\mathcal G}} \def\cH{{\mathcal H}} \def\cI{{\mathcal I}} \def\cJ{{\mathcal J}} \def\cK{{\mathcal K}} \def\cL{{\mathcal L}} \def\cM{{\mathcal M}} \def\cO{{\mathcal O}} \def\cP{{\mathcal P}} \def\cQ{{\mathcal Q}} \def\cR{{\mathcal R}} \def\cS{{\mathcal S}} \def\cU{{\mathcal U}} \def\cV{{\mathcal V}} \def\cW{{\mathcal W}} \def\cX{{\mathcal X}} \def\cY{{\mathcal Y}} 
\def\bB{{\mathbb B}} \def\bC{{\mathbb C}} \def\bD{{\mathbb D}} \def\bF{{\mathbb F}} \def\bR{{\mathbb R}} \def\bP{{\mathbb P}} \def\bN{{\mathbb N}} \def\bQ{{\mathbb Q}} 
\def\A{{\bf A}}  \def\B{{\bf B}}  \def\Z{{\bf Z}}
\def\N{{\bf N}}  \def\P{{\bf P}} \def\F{{\bf F}}
\def\M{{\bf M}}  \def\R{{\bf R}}  \def\Q{{\bf Q}}
\def\C{{\bf C}}

\title{On the hyperbolicity of surfaces of general type with small $c_1 ^2$}

\author{Xavier Roulleau, Erwan Rousseau}
\subjclass[2000]{14J25, 32Q45, 14J29.}
\keywords{Hyperbolicity, Green-Griffiths-Lang conjecture, Horikawa surfaces, Orbifolds.}
\thanks{The first author is supported by grant FCT SFRH/BPD/72719/2010 and project Geometria Alg\'ebrica PTDC/MAT/099275/2008. The second author is supported by the Agence Nationale de la Recherche (ANR) through projects POSITIVE (ANR-10-BLAN-0119) and COMPLEXE (ANR-08-JCJC-0130-01).}

\maketitle

\begin{abstract}
Surfaces of general type with positive second Segre number $s_2:=c_1^2-c_2>0$ are known by results of Bogomolov to be algebraically quasi-hyperbolic i.e. with finitely many rational and elliptic curves. These results were extended by McQuillan in his proof of the Green-Griffiths conjecture for entire curves on such surfaces. 
In this work, we study hyperbolic properties of minimal surfaces of general type with minimal $c_1^2$, known as Horikawa surfaces. In principle these surfaces should be the most difficult case for the above conjecture as illustrate the quintic surfaces in $\bP^3$. Using orbifold techniques, we exhibit infinitely many irreducible components of the moduli of Horikawa surfaces whose very generic member has no rational curves or even is algebraically hyperbolic. Moreover, we construct explicit examples of algebraically hyperbolic and quasi-hyperbolic orbifold Horikawa surfaces.
\end{abstract}


\section{Introduction}

Our motivations are the following conjectures of Green-Griffiths and
Lang:
\begin{conjecture}[Lang \cite{Lang}]
\label{Lang} Let $X$ be a complex variety of general
type. Then a proper Zariski closed subset $Z$ of $X$ contains all
its subvarieties not of general type. In particular, $X$ has only
a finite number of codimension-one subvarieties not of general type. 
\end{conjecture}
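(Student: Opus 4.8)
The plan is to isolate the part of Conjecture~\ref{Lang} that is within reach and then face the obstruction head on. The statement is open in general, and even the ``codimension one'' clause is only understood for surfaces, so I would first reduce: replacing each subvariety by a desingularization and using Noetherian induction, for a surface $X$ of general type the conjecture amounts to the assertion that $X$ contains only finitely many rational and elliptic curves (these being exactly the codimension-one subvarieties not of general type), their union supplying the exceptional set $Z$. Everything thus comes down to a Bogomolov-type finiteness theorem, and the engine for it is positivity of symmetric differentials.

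When the second Segre number is positive, $s_2 = c_1^2 - c_2 > 0$, a Riemann--Roch estimate gives $h^0(X, S^m\Omega^1_X) \gtrsim m^3$, so $S^m\Omega^1_X$ is big; a rational or elliptic curve $C$ must then either lie in the base locus of $|S^m\Omega^1_X|$ for all $m$, or be invariant under the foliation $\mathcal F$ cut out by the associated Bogomolov sheaf. Bogomolov's argument bounds the number of $\mathcal F$-invariant rational and elliptic curves by intersecting with $c_1(\mathcal F)$ and invoking the Bogomolov--Miyaoka--Yau inequality together with an index-type inequality for foliated surfaces; McQuillan's refinement upgrades this to the Green--Griffiths statement for entire curves, treating separately the Riccati and other degenerate foliated cases. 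This proves the surface case of Conjecture~\ref{Lang} whenever $s_2 > 0$, and is essentially the only regime in which a clean unconditional proof is currently available; in higher dimension one has at best analogous conditional statements.

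The main obstacle is the complementary regime $c_1^2 \le c_2$, and specifically the Noether line $c_1^2 = 2p_g - 4$ of Horikawa surfaces, where $h^0(X, S^m\Omega^1_X)$ can vanish for every $m$: there are no global symmetric differentials on $X$ at all, so every argument built on positivity of the cotangent bundle collapses. This is where I expect the real work to lie, and I would change the object of study: present the canonical model of $X$ as a double cover $\pi\colon X \to Y$ of a minimal rational surface $Y$ (a Hirzebruch surface $\bF_n$ or $\bP^2$) branched along a curve $B$, and replace symmetric differentials on $X$ by \emph{orbifold} symmetric and jet differentials of the pair $(Y,\Delta)$ with $\Delta = \tfrac12 B$. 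The point is that the orbifold cotangent sheaf of $(Y,\Delta)$ may well be big even when $\Omega^1_X$ has no sections, and a rational or elliptic curve on $X$ descends to a curve on $Y$ whose orbifold genus is then strongly constrained. The content of the argument is then threefold: (i) make the Riemann--Roch/vanishing estimate for orbifold jet differentials effective on these explicit families, so as to guarantee enough sections; (ii) locate and remove the base locus; and (iii) translate invariance under the induced orbifold foliation back into finiteness --- or, in the hyperbolic examples, nonexistence --- of rational and elliptic curves upstairs. Carrying (i)--(iii) out uniformly over infinitely many components of the Horikawa moduli space, and pinning down genuinely hyperbolic members, is the substance of what follows.
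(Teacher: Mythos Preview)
The statement you are addressing is a \emph{conjecture}, not a theorem, and the paper does not prove it; it is stated precisely as an open problem that motivates the paper. Immediately after stating it the authors write that ``even in the case of surfaces, this conjecture is still open, except in some specific cases.'' So there is no ``paper's own proof'' to compare against, and your proposal cannot be a proof either --- you acknowledge as much in your first paragraph.

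What you have written is not a proof but a research outline, and as such it is an accurate summary of the paper's strategy: reduce to surfaces, invoke Bogomolov/McQuillan for $s_2>0$, identify Horikawa surfaces as the hard case, and attack them via the orbifold structure on the base of the canonical double cover. That is exactly the paper's approach. But you should be clear about what this buys: the paper does \emph{not} carry out your steps (i)--(iii) ``uniformly over infinitely many components'' to prove Lang's conjecture even for Horikawa surfaces. It proves much less --- absence of rational curves, or algebraic hyperbolicity, for \emph{very generic} members of certain specific families (Theorem~\ref{thmprincipal}), and quasi-hyperbolicity for some explicit orbifold examples. Your item (ii), controlling the base locus, and the passage from ``very generic'' to ``all'' are genuine gaps that remain open. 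So your proposal correctly identifies the line of attack, but overstates what it delivers: it is a program, not a proof, and the paper itself only executes fragments of it.
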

Even in the case of surfaces, this conjecture is still open, except in some specific cases such as surfaces of general type with irregularity at least two \cite{Lu}. It has attracted a lot of attention because of its important conjectural links with arithmetic: according to the Bombieri-Lang conjecture, the rational points on a variety of general type defined over a number field should not be Zariski dense.

A surface satisfying Conjecture \ref{Lang}, i.e. with finitely many rational and elliptic curves, is said to be \emph{algebraically quasi-hyperbolic}. Following the terminology of Demailly \cite{De95}, a projective variety $X\subset\mathbb{P}^{n}$ is called  \emph{algebraically
hyperbolic} if there exists a positive real number \textgreek{e} such
that \[
2g(C)\lyxmathsym{\textminus}2\ge\lyxmathsym{\textgreek{e}}\deg C\]
for each reduced irreducible curve $C\subset X$, where $g(C)$ and
$\deg C$ are the geometric genus and the degree of the curve $C\hookrightarrow X$
respectively. 

 The analytic version of Conjecture \ref{Lang} is:
  
\begin{conjecture}[Green-Griffiths \cite{GG80}, Lang \cite{Lang}]
\label{GGL}
Let $X$ be a variety of general type. Then a proper Zariski closed
subset of $X$ contains all images of entire curves $f:\bC\to X$. 
\end{conjecture}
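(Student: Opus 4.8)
The plan is to prove the statement through the theory of jet differentials, in the Green--Griffiths--Demailly framework, reducing the conjecture to a base-locus problem. The basic mechanism is the \emph{fundamental vanishing theorem}, a consequence of the Ahlfors--Schwarz lemma: if $A$ is an ample line bundle and $P$ is a global section of the Green--Griffiths jet bundle $E_{k,m}T_X^{*}$ twisted by $A^{-1}$ (equivalently, a jet differential of order $k$ and weight $m$ vanishing on an ample divisor), then every nonconstant entire curve $f:\bC\to X$ satisfies the differential equation $P(f,f',\dots,f^{(k)})\equiv 0$. Hence every entire curve is trapped in the common zero locus $Y\subset X$ of all such sections, and the conjecture becomes the assertion that $Y$ is a \emph{proper} Zariski closed subset containing the images of all entire curves.

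First I would establish that $Y\neq X$, i.e.\ that nonzero jet differentials twisted by a negative bundle exist. This step is available in full generality for $X$ of general type: applying holomorphic Morse inequalities to the Demailly--Semple tower $\pi_k:X_k\to X$ and its tautological bundle $\cO_{X_k}(1)$, one shows that for $k$ and $m$ large the space $H^{0}\!\big(X_k,\cO_{X_k}(m)\otimes\pi_k^{*}A^{-1}\big)$ is nonzero, with dimension of order $m^{\dim X_k}$. The bigness of $K_X$ is precisely what makes the leading term of the Morse estimate positive, so this part of the argument requires no special geometry and holds for every variety of general type.

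The heart of the proof is to show that the locus $Y$ cut out by these equations is actually proper, and then to propagate the conclusion to $Y$ itself. The standard engine here is the \emph{differentiation} of the jet equations: one equips the jet space with a sufficiently rich family of meromorphic (slanted) vector fields and applies them to a given section $P$, producing new jet differentials $\cD P$ whose simultaneous vanishing locus is strictly smaller; iterating, one aims to cut $Y$ down in dimension and finish by Noetherian induction, each stratum being again of general type or of strictly lower dimension. In the surface case this is made effective by McQuillan's Second Main Theorem along the tautological foliation defined by $P$, which directly forbids entire curves once the relevant positivity $c_1^2-c_2>0$ is present.

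The main obstacle is exactly the properness of $Y$, and this is where the statement ceases to be routine and becomes the genuinely open core of the conjecture. Demailly's Morse argument produces sections but gives no control whatsoever of their base locus; to push the dimension of $Y$ down one needs enough global vector fields on the jet bundle, and for an arbitrary variety of general type there is no known construction guaranteeing them. The difficulty is sharpest precisely when the positivity of $K_X$ is weak, as for the Horikawa surfaces and the quintic surfaces in $\bP^3$ highlighted in the abstract, where $c_1^2$ is minimal: there the jet differentials may all degenerate along a divisor and the reduction stalls, so neither the vector-field argument nor the foliated Second Main Theorem applies off the shelf. I therefore expect the honest difficulty to be concentrated entirely in establishing, uniformly across all varieties of general type, that the common zero locus of the jet differentials is a proper subvariety containing every entire curve; absent such a uniform result, the positivity-enhancing orbifold techniques developed later in the paper provide the needed control in those families where the construction can be carried out explicitly.
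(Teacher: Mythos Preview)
The statement you are attempting to prove is not a theorem in the paper but a \emph{conjecture} (Conjecture~\ref{GGL}, the Green--Griffiths--Lang conjecture). The paper does not prove it and makes no claim to; on the contrary, the text stresses that ``even in the setting of manifolds, this conjecture is widely open'' and only records partial results (Bogomolov, Lu--Yau, McQuillan, Demailly--El~Goul) under additional positivity hypotheses such as $c_1^2-c_2>0$ or $13c_1^2-9c_2>0$. There is therefore no ``paper's own proof'' to compare against.

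Your write-up is not a proof but a survey of the standard jet-differential strategy, and you yourself identify the gap correctly: the existence of global jet differentials follows from Demailly's holomorphic Morse inequalities for any variety of general type, but controlling the base locus $Y$ and showing it is proper is precisely the open problem. The vector-field/differentiation mechanism you invoke is only known to work for generic complete intersections (Siu, Diverio--Merker--Rousseau, etc.), not for arbitrary varieties of general type, and McQuillan's foliated Second Main Theorem requires the extra hypothesis $c_1^2-c_2>0$ on surfaces. So your proposal does not close the gap; it accurately locates it. The honest conclusion is that no proof of the conjecture is currently available, and the paper's contribution is not to prove Conjecture~\ref{GGL} but to verify weaker algebraic-hyperbolicity statements for specific families of Horikawa surfaces via orbifold techniques.
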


A projective variety satisfying Conjecture \ref{GGL}  is said to be \emph{quasi-hyperbolic} and a variety that contains no non-constant entire curve is said  to be \emph{hyperbolic}.

Let us recall that minimal surfaces of general type $X$ are classified according to their Chern numbers. These  Chern numbers $c_1^2,c_2$ satisfy the two famous inequalities of Noether and Bogomolov-Miyaoka-Yau (see for example \cite{Barth}): 
$$\frac{1}{5}(c_2- 36) \leq c_1^2 \leq 3 c_2.$$

If $c_1^2=3c_2$ then, by Yau \cite{Y1}, $X$ is a quotient of the unit ball. Therefore $X$ is hyperbolic.

A striking overhang toward Conjecture \ref{Lang} is
its proof by Bogomolov for surfaces whose Segre number $s_{2}=c_{1}^{2}-c_{2}$
is positive \cite{Bog77}. The key point in Bogomolov's proof is that for surfaces
with positive Segre number, the sufficiently high symmetric differentials
have global sections.

An extension of Bogomolov's result to its analytic version was obtained by Lu and Yau \cite{LuYa} proving conjecture \ref{GGL} for surfaces with $c_1^2 >2c_2$.

Then McQuillan \cite{McQ0} proved Conjecture \ref{GGL} for surfaces with positive second Segre number.

For surfaces with $s_{2}\leq0$ there is no such a good
result. Demailly and El Goul \cite{DeEG} proved Conjecture \ref{GGL} for some surfaces with $13c_1^2 >9c_2$.

In the extreme case, a surface that reaches the equality $c_{2}=5c_{1}^{2}+36$ if $c_{1}^{2}$
is even and $c_{2}=5c_{1}^{2}+30$ otherwise is called a Horikawa surface.
Thus the Horikawa surfaces are the one for which the Segre number
$s_{2}=c_{1}^{2}-c_{2}$ is the most negative as possible in the geography
of (minimal) surfaces of general type.

As the previous list of results suggests, one is naturally lead to believe that it is for these surfaces that the above conjectures will be the most difficult to establish.

Even for hypersurfaces in $\mathbb{P}^{3}$, (which have $s_2<0$), we are far from complete results.
The case of quintics ($c_1 ^2 =5, c_2 =55$) is particularly difficult to treat and this may be explained by the fact that they are Horikawa surfaces, thus with minimal second Segre number. Indeed, a conjecture by Kobayashi predicts that generic quintics should be hyperbolic. 
We know by Geng Xu \cite{Xu} that a very generic surface of degree
$d\geq5$ in $\bP^{3}$ contains no curve of geometric genus $0$
or $1$. But we still do not know that they are hyperbolic, even algebraically hyperbolic.

Moreover, we have very few examples of hyperbolic surfaces
of low degree. A striking fact is that no example of hyperbolic surface is known for $d=5$. 

In this paper we investigate the hyperbolic properties of Horikawa surfaces using their very interesting geometric properties. Indeed, we know that most Horikawa surfaces realizing the equality $c_{2}=5c_{1}^{2}+36$
appear as ramified coverings \cite{Hor76}. Therefore it is very natural to associate to such surface an orbifold.
Our philosophy is to use this orbifold to study the geometry of curves in Horikawa surfaces.

We will see that ''orbifold'' techniques as systematically introduced
by Campana (\cite{Campana04}, \cite{Campana07}, see also \cite{Rousseau})
are useful in this context.

Let $N \geq 0$ be an integer and let $\mathbb{F}_N$ be the $N^{th}$ Hirzebruch surface. The surface $\mathbb{F}_N$ has a natural fibration $\mathbb{F}_N \to \mathbb{P}^1$. We denote by $F$  a fiber, and by $T$ the section of this fibration such that  $T^{2}=N$. Any divisor $D$ on $\mathbb{F}_N$ is numerically equivalent to $aT+bF$ for $a,b$ integers. We denote by $(a,b)$ the equivalence class of $D$. The holomorphic Euler characteristic of a surface is $\chi:=\frac{c_{1}^{2}+c_{2}}{12}$. Horikawa obtained the following classification \cite{Hor76}: a Horikawa surface with even $c_1^2$ is either
\begin{enumerate}
\item a double covering of $\bP^2$ branched along an octic ($\chi=4$),
\item a double covering of $\bP^2$ branched along a curve of degree $10$ ($\chi=7$),
\item a double covering of $\bF_N$ branched along a curve of type $(6,2a)$, ($2a\geq -N$) ($\chi=3N+2a-1$).
\end{enumerate}
Here the branch curve has at most $ADE$ singularities. We show:

\begin{thm}
\label{thmprincipal}
\begin{enumerate}
\item Let $X$ be a very generic Horikawa surface of type $(1)$. Then $X$ has no rational curves.
\item Let $X$ be a very generic Horikawa surface of type $(2)$. Then $X$ is algebraically hyperbolic, in particular $X$ has no rational or elliptic curves.
\item Let $X$ be a very generic Horikawa surface of type $(3)$ with $a=3$ and $\chi=3N+5$. Then $X$ has no rational curves.
\end{enumerate}
\end{thm}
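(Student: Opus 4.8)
The plan is to work entirely inside Campana's orbifold category. Write $\pi\colon X\to S$ for the double cover, with $S=\bP^2$ in cases (1)–(2) and $S=\mathbb{F}_N$ in case (3), and let $B$ be the (reduced) branch curve; attach to $\pi$ the orbifold surface $(S,\tfrac12 B)$. Note that in all three cases $K_S+\tfrac12 B$ is ample: it equals $H$ on $\bP^2$ with an octic, $2H$ on $\bP^2$ with a decic, and $T+(N+1)F$ on $\mathbb{F}_N$ with a $(6,6)$-curve; hence $X$ is minimal of general type and $(S,\tfrac12 B)$ is an orbifold of general type — but with \emph{negative} orbifold Segre number ($\bar c_1^2-\bar c_2=\tfrac12(c_1^2(X)-c_2(X))<0$), so Bogomolov's symmetric-differential method fails on the orbifold just as it does on $X$, and genericity of $B$ must be used. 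The first real step is the dictionary: for a reduced irreducible curve $C\subset X$ with normalization $n\colon\widetilde C\to C$ and image $\bar C=\pi(C)$, the morphism $f=\pi\circ n\colon\widetilde C\to S$ is an orbifold morphism to $(S,\tfrac12 B)$, i.e. $\mathrm{mult}_p(f^*B)$ is even for every $p$ (at most two exceptions when $\pi|_C$ has degree two, forced by Riemann–Hurwitz), because locally $X=\{y^2=h\}$ with $\{h=0\}=B$, so $f^*h$ is a square; conversely any such $f$ lifts to $X$. Orbifold Riemann–Hurwitz then bounds the genus of $C$ in terms of the tangency profile of $\bar C$ with $B$. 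Thus: in cases (1) and (3) we must show that a very generic $B$ admits no rational curve $\bar C\subset S$ meeting $B$ with everywhere-even multiplicity, and in case (2) that the geometric genus of \emph{any} such orbifold curve grows at least linearly with its degree.

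The second step is a Voisin–Xu style parameter count, performed on $S$ rather than on $X$. Fix a numerical class $\beta$ on $S$ (the degree in $\bP^2$, the bidegree $(p,q)$ on $\mathbb{F}_N$) and a geometric genus $g$, let $V_{\beta,g}$ be the corresponding Severi-type variety of irreducible curves together with their normalizations, and form the incidence variety
\[
\cI_{\beta,g}=\{(\bar C,B)\ :\ \bar C\in V_{\beta,g},\ \nu_{\bar C}^*B=2D'\ \text{for some effective }D'\}\subset V_{\beta,g}\times\bP H^0(S,\mathcal O_S(B)).
\]
The condition $\nu_{\bar C}^*B=2D'$ says that the pullback of a section of $\mathcal O_S(B)$ to $\widetilde C$ (a form of degree $\beta\cdot B$) is a perfect square, which cuts out a cone of codimension $\tfrac12(\beta\cdot B)$ inside $H^0(\widetilde C,\mathcal O(\beta\cdot B))$ (up to the $g$-dimensional freedom of the Jacobian); comparing this with $\dim V_{\beta,g}$ (linear in $\beta$ plus $g$) and the fixed dimension of $\bP H^0(S,\mathcal O_S(B))$, and using that the restriction map $H^0(S,\mathcal O_S(B))\to H^0(\widetilde C,\mathcal O(\beta\cdot B))$ is as surjective as it can be, one gets $\dim\cI_{\beta,g}<\dim\bP H^0(S,\mathcal O_S(B))$ — so the second projection is not dominant. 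The codimension of full even tangency grows linearly in $\beta$ with slope $\tfrac12\deg B$, and this slope beats the growth of $\dim V_{\beta,g}$ precisely when $\deg B$ is large enough: $\deg B=8$ suffices for $g=0$ (giving (1) and, on $\mathbb{F}_N$, (3)), while $\deg B=10$ suffices for all $g$ simultaneously, which is exactly algebraic hyperbolicity (case (2)). Since there are only countably many pairs $(\beta,g)$, a very generic member of any irreducible component of the moduli of such branch curves avoids all the proper closed subsets $\overline{\mathrm{pr}_2(\cI_{\beta,g})}$, and we are done. (Equivalently one may phrase this through Voisin's universal family $\cX\to\cB$ over the space of branch curves and the positivity of $K_{\cX/\cB}$, which is the modern form of Xu's argument.)

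For case (3) there is an additional structural input worth exploiting: since $\mathbb{F}_N\to\bP^1$ has rational fibers $F$ with $B\cdot F=6$, the surface $X$ inherits a genus-two fibration $X\to\bP^1$ (the fibers being the double covers of $F\cong\bP^1$ branched at $B\cap F$), which is non-isotrivial for generic $B$. A hypothetical rational curve on $X$ cannot lie in a fiber, hence is a rational multisection; organizing the count above by the fiber-degree $q$ and invoking the constraints on rational multisections of non-isotrivial genus-two fibrations of general type streamlines the argument and makes it uniform as the class $\beta=(p,q)$ varies.

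The main obstacle is the parameter count for \emph{degenerate} curves $\bar C$: very singular rational curves, multiple covers of lower-degree curves, and — crucially, because $B$ has $ADE$ singularities — curves forced to pass through the singular points of $B$. For such $\bar C$ the restriction map $H^0(S,\mathcal O_S(B))\to H^0(\widetilde C,\mathcal O(\beta\cdot B))$ can drop rank, so the naïve codimension of "$\nu_{\bar C}^*B$ is a square" is not attained, and one needs a stratification of $V_{\beta,g}$ by the singularity type of $\bar C$ together with a stratum-by-stratum estimate showing that even the worst strata contribute too little to $\cI_{\beta,g}$; it is here that the genericity of $B$ (its smoothness, and the absence of unexpectedly tangent curves of small class, which must be checked by hand in low degree — e.g. no line or conic is everywhere tangent to a generic $B$) is actually used. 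Secondary difficulties are pushing the count through all geometric genera $g$ in case (2), and ensuring the estimates in case (3) are uniform in $N$, so that one genuinely obtains the asserted conclusion for every one of the infinitely many components $\chi=3N+5$.
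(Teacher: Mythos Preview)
Your first step --- the orbifold dictionary reducing curves on $X$ to orbifold morphisms $\widetilde C\to(S,\tfrac12 B)$, i.e.\ curves in $S$ meeting $B$ with controlled tangency --- is exactly what the paper does. The divergence is in the second step. The paper does \emph{not} run a Voisin--Xu incidence count; instead it imports Xi Chen's log algebraic hyperbolicity inequality as a black box: for very generic $D\subset\bP^2$ of degree $d$ (resp.\ $D\subset\mathbb F_N$ of type $(a,b)$) and any reduced irreducible $C\not\subset D$,
\[
2g(C)-2+i(C,D)\ \ge\ (d-4)\deg C\qquad\bigl(\text{resp.\ }\ge\min(a-3,b-2)\deg C\bigr),
\]
where $i(C,D)$ is the number of points in $\nu^*D$. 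From this the paper gets, by the short orbifold computation you also anticipate, $\deg K_{\mathcal C}\ge\bigl(d-\tfrac{d}{n}-4\bigr)\deg C$ for an $n$-cyclic cover; with $(d,n)=(8,2),(10,2)$ this gives $\ge 0$ and $\ge\deg C$ respectively, yielding (1) and (2). For $\mathbb F_N$ with $(a,b)=(6,6)$ and $n=2$ one gets $\deg K_{\mathcal C}\ge 0$, giving (3). So the paper's proof is two lines once Chen is on the table.

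Your proposed parameter count is, in effect, an attempt to reprove Chen's inequality in the special cases needed. That is a legitimate alternative in principle, but the ``main obstacle'' paragraph you wrote is precisely why Chen does \emph{not} argue by a direct dimension count: for high-degree $\bar C$ the restriction $H^0(S,\mathcal O(B))\to H^0(\widetilde C,\nu^*\mathcal O(B))$ is far from surjective, and for degenerate $\bar C$ the codimension of the square locus drops; controlling all strata uniformly in $\beta$ is exactly the hard part, and you have not indicated how to close it. Chen's actual method is degeneration (specialize $D$ to a union of very generic curves of low degree, prove the inequality there where the combinatorics is tractable, then use semicontinuity), not a naive incidence count. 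If you want a self-contained argument, that is the template to follow; otherwise, citing Chen collapses your second step to a line and removes every difficulty you listed, including the uniformity in $N$ for case (3) and the need to treat all genera in case (2).
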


Here, the terminology ``very generic'' is used to indicate that the property is satisfied outside a countable union of algebraic subsets in the moduli space.
This result is obtained as a consequence of more general results on the algebraic hyperbolicity of branched covers which may be of independent interest.

One of the most natural examples of surfaces with $s_{2}<0$ are hypersurfaces
in $\mathbb{P}^{3}$. In \cite{Bogomolov2}, Bogomolov and De Oliveira
studied the hyperbolicity of hypersurfaces of degree $d>5$ with sufficiently many nodes. Translated into the langue of orbifold, their key observation is that for such a surface $X$, the natural structure of orbifold $\mathcal{X}$
has positive orbifold Segre number $s_{2}(\mathcal{X})>0$. We investigate hyperbolic properties of the quintic surfaces that are degree $5$ cyclic covers of $\bP^2$ branched over $5$ lines in general position. In this study, we develop the theory of jet differentials in the orbifold setting and, as an application, prove

\begin{thm}
\label{Quint}
Let $X$ be a quintic surface that is a  degree $5$ cyclic cover of $\bP^2$ branched over $5$ lines in general position. Then any entire orbifold curve $f: \bC \to X$ satisfies a differential equation of order $2$.
\end{thm}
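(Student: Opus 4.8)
The plan is to set up the orbifold geometry carefully and then run a jet-differential argument in the spirit of the Demailly--Semple tower, but with orbifold multiplicities along the branch divisor. Let $\pi\colon X \to \bP^2$ be the degree $5$ cyclic cover branched over the union $D$ of five lines $L_1,\dots,L_5$ in general position, so that $X$ carries a natural orbifold structure $\cX = (\bP^2, \Delta)$ with $\Delta = \sum_{i=1}^5 (1-\tfrac15)L_i$, and the orbifold $\cX$ is the ``downstairs'' object that $X$ uniformizes away from the ten intersection points $L_i\cap L_j$. First I would compute the orbifold Chern/Segre invariants: the orbifold canonical bundle is $K_\cX = K_{\bP^2} + \Delta = \cO(-3) + \tfrac45\sum L_i = \cO(1)$ (so $\cX$ is of general type, as expected since $X$ is), and then compute $c_1^2(\cX)$ and $c_2(\cX)$ (the orbifold Euler characteristic, with the usual correction terms at the ten nodes of $D$). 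The point of this computation is to check the sign of the orbifold second Segre number $s_2(\cX) = c_1^2(\cX) - c_2(\cX)$, since a positivity (or near-positivity) statement is exactly what produces global orbifold symmetric or jet differentials by a Riemann--Roch / vanishing argument.

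Next I would develop, as the excerpt promises, the theory of orbifold jet differentials: the bundle $E_{k,m}^{GG}T^*_\cX$ of orbifold jet differentials of order $k$ and weighted degree $m$ on $\cX$, whose sections pull back to genuine jet differentials on $X$ vanishing to the appropriate order along the ramification, and more importantly whose sections any entire orbifold curve $f\colon \bC \to \cX$ (equivalently, by the uniformization, any entire curve in $X$ compatible with the orbifold structure) must satisfy, with values in a suitable negative (or nef minus ample) twist, by the orbifold tautological inequality / logarithmic-type Ahlfors--Schwarz lemma. Then I would run the standard algebraic-geometric existence argument at jet order $k=2$: estimate $h^0(\cX, E_{2,m}^{GG}T^*_\cX \otimes \cA^{-1})$ for an ample $\cA$ via the algebraic Morse inequalities / Riemann--Roch on the $2$-jet bundle, showing it grows like a positive multiple of $m^{\dim}$ once the relevant intersection-number inequality — which will come down to the orbifold invariants computed in the first step, specifically something like $13\,c_1^2(\cX) - 9\,c_2(\cX) > 0$ or the analogous order-$2$ threshold — holds for this particular configuration. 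Producing such a global order-$2$ orbifold jet differential $P$ vanishing on an ample divisor then forces every entire orbifold curve $f$ to satisfy $P(f,f',f'') \equiv 0$, which is precisely the order-$2$ differential equation in the statement.

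The main obstacle I expect is making the orbifold jet-differential formalism rigorous at the ten singular points $L_i\cap L_j$ of the branch locus, where the orbifold $\cX$ is not smooth and the naive cyclic-cover uniformization degenerates (the local model there is a quotient $\bC^2/\mu_5$-type singularity on $X$, or equivalently a non-cyclic orbifold point downstairs). One must either resolve these points and track how the jet bundles and their positivity change under the resolution (controlling the discrepancies so the positivity estimate survives), or work directly with a logarithmic/orbifold jet bundle adapted to the crossing configuration and prove the tautological inequality there by hand. Concretely, I would handle this by passing to the minimal resolution $\widetilde{\cX}$, checking that the exceptional curves contribute only bounded corrections to $c_1^2$ and $c_2$, verifying that an entire orbifold curve either lands in the exceptional locus (excluded, or handled separately since those curves are rational/elliptic and one shows they cannot be images of $\bC$ in the hyperbolic regime) or lifts to $\widetilde{\cX}$, and finally checking that the order-$2$ jet differential constructed on $\widetilde{\cX}$ descends appropriately. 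The bookkeeping of multiplicities $1-\tfrac15$ along five lines through ten triple-or-higher-incidence-free points, and the resulting fractional intersection numbers in the Morse inequality, is the part that needs care but is otherwise routine once the framework is in place.
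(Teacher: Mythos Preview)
Your approach is essentially the same as the paper's: compute the orbifold Chern numbers, verify the order-$2$ threshold $13c_1^2(\cX)-9c_2(\cX)>0$, invoke the orbifold Riemann--Roch/semistability estimate for $h^0(E_{2,m})$, and conclude via the orbifold tautological vanishing theorem. The one difference is cosmetic: you compute on the downstairs orbifold $(\bP^2,\Delta)$ with $\Delta=\tfrac{4}{5}\sum L_i$, whereas the paper computes on the upstairs orbifold $\cX=(X,0)$, where $X$ is the singular quintic with its ten $A_4$ points. Since $X\to(\bP^2,\Delta)$ is a degree-$5$ orbifold cover, the Chern numbers scale by $5$; the paper gets $c_1^2=5$, $c_2=7$ (via the ADE formula of Proposition~\ref{ChernNumberOrbi} applied to the ten $A_4$'s on the resolved quintic), while your computation would give $c_1^2=1$, $c_2=7/5$, and in either case $13c_1^2-9c_2=2$ (resp.\ $2/5$) is positive.

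Your final ``main obstacle'' is a non-issue, and worrying about it obscures an otherwise clean argument. The ten crossing points of $\lceil\Delta\rceil$ are ordinary nodes, so $\Delta$ has normal-crossing support and $(\bP^2,\Delta)$ is already a smooth orbifold in the sense of the paper (Example~\ref{nc}): the local uniformizing chart is $(u,v)\mapsto(u^5,v^5)$ with abelian isotropy $(\bZ/5)^2$. Equivalently, upstairs the $A_4$ singularities of $X$ are quotient singularities and hence orbifold points with no extra $\Delta$. No resolution, discrepancy bookkeeping, or separate treatment of exceptional curves is needed; the orbifold jet bundles, Riemann--Roch, and the Ahlfors--Schwarz-type vanishing all apply directly on either model.
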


Using Nevanlinna theory and constructions of Persson \cite{Persson} of Horikawa surfaces with maximal Picard number, we exhibit some explicit
examples of quasi-hyperbolic Horikawa orbifold surfaces. 
\begin{thm}
For $\chi$ equal to $4$ and $2k-1$ (for any integer $k>2$), there exists  quasi-hyperbolic orbifold Horikawa surfaces
whose minimal resolutions have Euler characteristic $\chi$.
\end{thm}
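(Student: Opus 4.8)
\emph{Proof proposal.} The plan is to produce, for each prescribed value of $\chi$, a specific \emph{singular} Horikawa surface $Y$ realized as a double cover $\phi\colon Y\to S$ of a rational surface branched along a curve $B$ with only $ADE$ singularities, and to run the hyperbolicity argument not on $Y$ but on the Campana orbifold $\cX=(Y,\Delta)$ whose orbifold structure records the local isotropy groups at the rational double points of $Y$. For $\chi=4$ we take $S=\bP^2$ and $B$ a degenerate octic (type $(1)$); for $\chi=2k-1$ with $k>2$ we take $S=\bP^1\times\bP^1$ and $B$ of bidegree $(6,2k)$ (type $(3)$ with $N=0$, $a=k$). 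In both cases $B$ is one of the very degenerate branch curves used by Persson \cite{Persson} to realize Horikawa surfaces with maximal Picard number, so that its $ADE$ singularities are as numerous, and as deep, as the geography allows. The minimal resolution $X$ of $Y$ is then a smooth Horikawa surface, and since resolving rational double points does not change $\chi(\cO)$ while Horikawa's classification recalled above fixes $\chi$ from the numerical type of $(S,B)$, one gets $\chi(X)=4$, resp.\ $\chi(X)=3\cdot0+2k-1=2k-1$, as required.

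First I would carry out a Chern-number computation for $\cX$. Because $X\to Y$ is crepant one has $c_1^2(\cX)=c_1^2(X)$, whereas the orbifold Euler number differs from $c_2(X)=e(X)$ by an explicit negative contribution at each singular point — an $A_n$ point contributing $-\frac{n(n+2)}{n+1}$, with analogous expressions in the $D$ and $E$ cases — so that
\[
s_2(\cX)=c_1^2(\cX)-c_2(\cX)=s_2(X)+\sum_{p\in\mathrm{Sing}(Y)}\frac{n_p(n_p+2)}{n_p+1}+(D,E\text{ terms}).
\]
The key assertion is that Persson's highly singular $B$ makes this positive correction outweigh the very negative Horikawa value $s_2(X)$ (for instance $s_2(X)=2-46=-44$ when $\chi=4$), so that $s_2(\cX)>0$. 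This is the same mechanism that Bogomolov--De Oliveira \cite{Bogomolov2} exploit for nodal hypersurfaces, transported to these double covers.

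Next I would deduce quasi-hyperbolicity of $\cX$ from $s_2(\cX)>0$; this is where Nevanlinna theory enters. Once $s_2(\cX)>0$, Riemann--Roch on the orbifold produces, for $m$ large, many global orbifold symmetric differentials of degree $m$ twisted down by an ample class — the orbifold analogue of Bogomolov's theorem. An entire orbifold curve $f\colon\bC\to\cX$ is then tangent to the holomorphic foliation cut out by such a section, and McQuillan's argument — the orbifold version of the tautological (logarithmic derivative) inequality together with the second main theorem for the leaves of a foliation on a surface of general type, all genuinely Nevanlinna-theoretic — forces $f$ to be algebraically degenerate, with Zariski closure a rational or elliptic (orbifold) curve. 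As there are only finitely many such, all entire orbifold curves on $\cX$ lie in a proper Zariski closed subset; equivalently, every entire curve on $Y$ compatible with the orbifold structure lies in a proper subvariety of $Y$. Hence $\cX$ is a quasi-hyperbolic orbifold Horikawa surface whose minimal resolution has Euler characteristic $\chi$.

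The main obstacle is the quantitative input to the second step: $B$ has bounded degree, hence bounded arithmetic genus, which tightly constrains the total $\delta$-invariant of its singularities, and one must verify that Persson's particular configurations nonetheless push $\sum_p\frac{n_p(n_p+2)}{n_p+1}$ past $|s_2(X)|$ — $>44$ in the $\chi=4$ case, and a linear bound in $k$ for the family $\chi=2k-1$ — typically by trading clusters of nodes for single deeper singularities (tacnodes, ordinary triple points, $\dots$), which contribute more to $s_2(\cX)$ per unit of $\delta$-invariant. A secondary difficulty is to set up the orbifold analogues of Bogomolov's vanishing theorem and of McQuillan's foliated second main theorem carefully enough that they apply in the presence of the stacky points of $\cX$.
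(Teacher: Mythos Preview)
Your overall architecture is right and matches the paper's: pick Persson's maximally degenerate configurations, compute the orbifold second Segre number $s_2(\cX)=c_1^2(\cX)-c_2(\cX)$, check it is positive, and then invoke McQuillan's theorem for orbifold surfaces of general type with $s_2>0$ (Theorem~\ref{segre} here). For $\chi=4$ your description coincides with the paper's: the octic is the Steiner quartic together with the three cuspidal tangents and the bitangent, giving $6A_1+2A_3+D_4+3E_7$, and one gets $c_1^2(\cX)=1$, $c_2(\cX)=\tfrac{11}{32}$ downstairs (equivalently $2$ and $\tfrac{11}{16}$ on your cover $Y$), so $s_2>0$.

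The substantive divergence is in the $\chi=2k-1$ family. The paper does \emph{not} work on the singular double cover $Y$, nor with a $(6,2k)$ branch curve whose singularities grow with $k$. Instead it places the orbifold \emph{downstairs} on $\mathbb F_0$, with a divisor
\[
\Delta=\Bigl(1-\tfrac{1}{k}\Bigr)(F_1+F_2)+\Bigl(1-\tfrac{1}{2}\Bigr)(C_1+C_2+E_0+E_1+E_2+E_3)
\]
of fixed support of type $(6,4)$ and \emph{mixed multiplicities $2$ and $k$}, reflecting Persson's realization of these Horikawa surfaces as $\mathbb Z/2\mathbb Z\times\mathbb Z/k\mathbb Z$-covers rather than plain double covers. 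The singularity list of $\lceil\Delta\rceil$ ($6A_1+4D_4+D_6$) is then independent of $k$; only the local isotropy orders in Lemma~\ref{Beta} vary, and one computes directly $c_1^2(\cX)=2(1-\tfrac2k)$, $c_2(\cX)=\tfrac{33}{32}-\tfrac2k+\tfrac1{k^2}$, hence $s_2(\cX)=\tfrac{31}{32}-\tfrac2k-\tfrac1{k^2}>0$ for $k>2$. Your upstairs orbifold on $Y$ is the $2k$-fold orbifold cover of this $(\mathbb F_0,\Delta)$ (as the paper notes in the Remark after Proposition~\ref{pro:Horikawa 2k-1}), so the signs of $s_2$ agree and your route would eventually succeed; but your ``$B$ of bidegree $(6,2k)$ used by Persson'' is not literally Persson's curve---it is the pullback of the fixed $(6,4)$ configuration under the intermediate $\mathbb Z/k\mathbb Z$-cover---and carrying out your ADE bookkeeping on it for all $k$ is exactly the ``linear-in-$k$'' obstacle you flag. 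The paper's mixed-multiplicity orbifold sidesteps that entirely. Finally, the Nevanlinna/McQuillan step you describe is simply black-boxed as Theorem~\ref{segre}; it is not re-proved here.
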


To finish this introduction, let us remark that  in characteristic $p>0$ the analog of the Lang conjecture is false. In \cite{Liedtke}, unirational Horikawa surfaces are constructed over fields of positive characteristic, these unirational surfaces have maximal Picard number.

The paper is structured as follows. In section $2$ we recall the definition of orbifolds and the classical results we will use. In section $3$ we prove our main Theorem \ref{thmprincipal} and some further results on surfaces that are cyclic ramified covers of the plane. In section $4$ we prove Theorem \ref{Quint}. Section $5$ is devoted to the construction of quasi-hyperbolic Horikawa orbifold surfaces.

\textbf{Acknowledgements}. Part of this research was done during the authors stay in Strasbourg University. We thank Matthias Sch\"utt for many discussions on singularities of quintic surfaces, Julien Grivaux for discussions on orbifolds, Beno\^it Claudon and Jevgenija Pavlova for their drawings. We also thank the referee for his remarks and suggestions.

\section{Orbifold set-up}

\subsection{Orbifolds}

As in \cite{GK}, we define orbifolds as a particular type of log pairs. The data $(X,\Delta)$ is a log pair if $X$ is a normal algebraic variety (or a normal complex space) and $\Delta=\sum_id_iD_i$ is an effective $\bQ$-divisor where the $D_i$ are distinct, irreducible divisors and $d_i \in \bQ$. 

For orbifolds, we need to consider only pairs $(X,\Delta)$ such that $\Delta$ has the form
$$\Delta=\sum_i \left(1-\frac{1}{m_i}\right) D_i,$$
where the $D_i$ are prime divisors and $m_i \in \bN$. 

\begin{defn}
An {\em orbifold chart} on $X$ compatible with $\Delta$ is a Galois covering $\varphi : U \to \phi(U)\subset X$ such that
\begin{enumerate}
\item $U$ is a domain in $\bC^n$ and $\varphi(U)$ is open in $X$,
\item the branch locus of $\varphi$ is $\lceil \Delta \rceil \cap \varphi(U)$,
\item for any $x \in U'':=U\setminus \varphi^{-1}(X_{sing}\cup \Delta_{sing})$ such that $\varphi(x) \in D_i$, the ramification order of $\varphi$ at $x$ verifies $ord_{\varphi}(x)=m_i.$
\end{enumerate}
\end{defn}

\begin{defn}
An orbifold $\mathcal{X}$ is a log pair $(X,\Delta)$ such that $X$ is covered by orbifold charts compatible with $\Delta.$
\end{defn}

\begin{rem}
\begin{enumerate}
\item In the language of stacks, we have a smooth Deligne-Mumford stack $\pi: \mathcal{X} \rightarrow X$, with coarse moduli space $X$.
\item More generally, Campana introduces {\em geometric orbifolds} in \cite{Campana04} as pairs of this type but which are not necessarily locally uniformizable.
\end{enumerate}
\end{rem}

\begin{example}\label{nc}
Let $X$ be a complex manifold and $\Delta=\sum_{i} (1-\frac{1}{m_{i}})D_{i}$ with a support $\lceil \Delta \rceil$ which is a normal crossing divisor, i.e. for any point $x\in X$ there is a holomorphic coordinate system $(V,z_1,\dots,z_n)$ such that $\Delta$ has equation $$z_{1}^{(1-\frac{1}{m_{1}})}\dots z_{n}^{(1-\frac{1}{m_{n}})}=0.$$ Then $(X,\Delta)$ is an orbifold. Indeed, fix a coordinate system as above. Set
$$\varphi: U \to V, \text{   } \phi(x_1,\dots,x_n)=(x_1^{m_1},\dots,x_n^{m_n}).$$
Then $(U,\phi)$ is an orbifold chart on $X$ compatible with $\Delta$.
\end{example}

In the case of surfaces we have more examples of orbifolds looking at singularities that naturally appear in the logarithmic Mori program.
\begin{defn}
Let $(X,\Delta)$, $\Delta=\sum_i\left(1-\frac{1}{m_i}\right)C_i$, be a pair where $X$ is a normal surface and $K_X+\Delta$ is $\mathbb{Q}$-Cartier. Let $\pi: \tilde{X} \to X$ be a resolution of the singularities of $(X,\Delta)$, so that the exceptional divisors, $E_i$ and the components of $\tilde{\Delta}$, the strict transform of $\Delta$, have normal crossings and
$$K_{\tilde{X}}+\tilde{\Delta}+\sum_iE_i=\pi^*(K_X+\Delta)+\sum_ia_iE_i.$$

We say that $(X,\Delta)$ is \emph{klt} (Kawamata log terminal) if $m_i<\infty$ and $a_i > 0$ for every exceptional curve $E_i$.

\end{defn}

Thanks to a result of \cite{Nak89} (see also the appendix of \cite{Campana10}), we have
\begin{example}\label{klt}
Let $(X,\Delta)$, $\Delta=\sum_i\left(1-\frac{1}{m_i}\right)C_i$, be a klt pair with $X$ a surface, then $(X,\Delta)$ is an orbifold.
\end{example}

\subsection{Chern classes}

\label{SubsectionChernClasses}

Let $\pi: \mathcal{X} \to (X, \Delta)$ be a two dimensional orbifold. Let $S$ be the singular
points of $X$ and of the divisor $\left\lceil \Delta\right\rceil$, $\Delta=\sum(1-\frac{1}{m_{i}})D{}_{i}$. The orbifold canonical line bundle is:
$K_\mathcal{X}=\pi^*(K_X+\Delta),$ therefore:
$$c_{1}(\mathcal{X})=-\pi^{*}(K_{X}+\sum(1-\frac{1}{m_{i}})D{}_{i}).$$

To each
point $p$ of $S$, there is a well defined integer $\beta(p)$, the order of the isotropy group,
such that by the orbifold Gauss-Bonnet formula \cite{Toen}:
 
$$c_{2}(\mathcal{X})=e(X)-\sum(1-\frac{1}{m_{i}})e(D_{i}\setminus S)-\sum_{p\in S}(1-\frac{1}{\beta(p)}).$$

Let $p\in S$ be a smooth point of $X$.  

\begin{lem}
\label{Beta} 
Suppose that $p$ is an $ADE$ singularity of $\left\lceil \Delta\right\rceil$. We have :
 
\begin{tabular}{|c|c|}
\hline 
Type & $\beta(p)$\tabularnewline
\hline 
$A_{1}$ & $m_{i}m_{j}$\tabularnewline
\hline 
$A_{2n}$ & $\frac{2}{2n+1}(\frac{1}{m_{i}}+\frac{1}{2n+1}-\frac{1}{2})^{-2}$\tabularnewline
\hline 
$A_{2n-1}$ & $\frac{4}{n}(\frac{1}{m_{i}}+\frac{1}{m_{j}}+\frac{1}{n}-1)^{-2}$,
$n\geq2$\tabularnewline
\hline 
$D_{2n+2}$ & $\frac{4}{n}(\frac{1}{m_{i}}+\frac{1}{m_{j}}+\frac{1}{nm_{k}}-1)^{-2}$\tabularnewline
\hline 
$D_{2n+3}$ & $2(2n+1)m_{i}^{2}$, $n\geq1$\tabularnewline
\hline 
$E_{7}$ & $96$\tabularnewline
\hline
\end{tabular} 

where the branches are as follows:
\begin{figure}[!h]
\setlength{\unitlength}{1.2mm}
\begin{picture}(120,20)
\qbezier(1,1)(11,8)(1,15)\qbezier(11,1)(1,8)(11,15)\put(6,8){\circle*{1}}
\put(7,9){$$}\put(-1,9){$ $}\put(-1,16){$m_i$}\put(10,16){$m_j$}

\qbezier(24,1)(31,6)(31,15)\qbezier(31,15)(31,6)(38,1)\put(31,15){\circle*{1}}
\put(28,17){$$}\put(27,1){$m_i$}
\qbezier(48,1)(58,8)(48,15)\qbezier(58,1)(48,8)(58,15)\put(53,8){\circle*{1}}\put(48,8){\line(1,0){10}}
\put(54,9){$$}\put(46,9){$m_k$}\put(46,16){$m_i$}\put(57,16){$m_j$}
\qbezier(68,1)(75,6)(75,15)\qbezier(75,15)(75,6)(82,1)\put(75,15){\circle*{1}}\put(68,15){\line(1,0){14}}
\put(72,17){$ $}\put(83,15){$m_i$}\put(71,1){$2$}
\qbezier(92,1)(99,6)(99,15)\qbezier(99,15)(99,6)(106,1)\put(99,15){\circle*{1}}\put(99,1){\line(0,1){14}}
\put(96,17){$ $}\put(100,1){$2$}\put(105,3){$2$}
\end{picture}

\begin{picture}(120,5)
\put(6,1){$A_{2n+1} $}
\put(27,1){$A_{2n} $}
\put(49,1){$D_{2n+2}$}
\put(72,1){$D_{2n+1}$}
\put(97,1){$E_{7}$}
\end{picture}
\caption{ADE singularities and conditions on the multiplicities}
\end{figure}
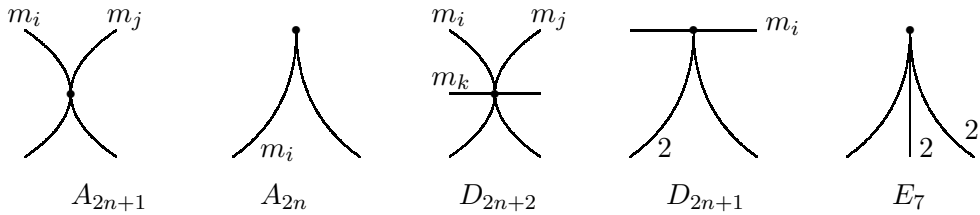

\end{lem}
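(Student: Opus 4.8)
The plan is to identify $\beta(p)$ with the order of the local uniformizing group at $p$ and then to compute that order. Since $p$ is a smooth point of $X$ over which $\mathcal X$ is locally uniformizable, an orbifold chart $\varphi\colon U\to V$ around $p$ presents the germ $(X,p)\simeq(\bC^2,0)$ as a quotient $(\bC^2,0)/G$ for a finite subgroup $G\subset\mathrm{GL}_2(\bC)$; the fiber of $\varphi$ over $p$ is a single point, as $p$ is the most singular point of $\lceil\Delta\rceil\cap V$, so the isotropy group at that point is all of $G$ and $\beta(p)=|G|$. Because the coarse space $\bC^2/G$ is \emph{smooth}, the Chevalley--Shephard--Todd theorem forces $G$ to be generated by pseudo-reflections, i.e. $G$ is a finite rank-two complex reflection group: its mirror lines map onto the branches $D_i$ of $\lceil\Delta\rceil$ at $p$, and the order of the cyclic group of pseudo-reflections pointwise fixing a given mirror equals the ramification index $m_i$ along the corresponding $D_i$. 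The hypothesis that $\mathcal X$ is an orbifold guarantees that such a finite $G$ exists; only its order remains to be determined.

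To do this I would combine the classification of rank-two complex reflection groups with a resolution picture that makes the table transparent. In the transverse case $A_1$ (normal crossings, Example~\ref{nc}) one has $G=\mathbb{Z}/m_i\times\mathbb{Z}/m_j$ acting diagonally, hence $\beta(p)=m_im_j$. In every other $ADE$ case, resolving the curve germ $\lceil\Delta\rceil$ inside $(\bC^2,0)$ singles out a distinguished exceptional $\bP^1$ --- the component created when the branches finally separate, meeting the remaining components and the strict transforms in exactly three special points --- onto which the cover $U\to V$ essentially fibers, equipped with a triangle orbifold structure $(\bP^1;\mu_1,\mu_2,\mu_3)$ whose cone orders are read off from $m_i,m_j,m_k$ and the self-intersections of the resolution graph. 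Finiteness of $G$ is then exactly the condition that this triangle orbifold be spherical, i.e. $\frac1{\mu_1}+\frac1{\mu_2}+\frac1{\mu_3}>1$, which unwinds to positivity of the ``defects'' $\frac1{m_i}+\frac1{m_j}+\frac1n-1$, $\frac1{m_i}+\frac1{2n+1}-\frac12$, $\frac1{m_i}+\frac1{m_j}+\frac1{nm_k}-1$ appearing in the table, and $|G|$ comes out as the order of the corresponding binary dihedral/polyhedral group times a factor recording the self-intersection of the exceptional curve --- yielding the expressions $\frac2{2n+1}(\cdots)^{-2}$, $\frac4n(\cdots)^{-2}$, $2(2n+1)m_i^2$ and, for the parameter-free $E_7$ germ, the value $96$ carried by a primitive rank-two reflection group of that order. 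One verifies case by case that the concrete $G$ is $\mathbb{Z}/m_i\times\mathbb{Z}/m_j$, a dihedral group, a monomial group $G(k,d,2)$, or one of the primitive groups, the relevant one being pinned down by the multiplicities.

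The real work is bookkeeping rather than any isolated difficulty: one must check that the rank-two reflection groups whose mirror configuration has the combinatorial type of an $ADE$ curve germ are precisely the ones in the table (in particular that $E_6$ and $E_8$ never appear), set up correctly the dictionary between the intrinsic data of $G$ and the branch multiplicities $m_i,m_j,m_k$, and determine the exact parameter ranges ($n\ge2$ for $A_{2n-1}$, $n\ge1$ for $D_{2n+3}$, and so on) in which the relevant group exists with smooth quotient. For the non-primitive cases an alternative avoiding the classification is to construct the uniformizing cover by hand --- normalize the fiber product over the branches of the cyclic covers $t\mapsto t^{m_i}$, resolve its singularities, and compute the degree --- which trades the group-theoretic input for one embedded-resolution computation per $ADE$ type.
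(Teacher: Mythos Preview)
Your sketch is mathematically sound and, in fact, goes well beyond what the paper does: the paper's ``proof'' of this lemma is a bare list of citations (\cite{Uludag2}, \cite{Uludag}, \cite{Campana10}, \cite{Kobayashi}) with no argument given. What you outline---identifying $\beta(p)$ with $|G|$ for the local uniformizing group, invoking Chevalley--Shephard--Todd to conclude that $G$ is a rank-two complex reflection group because the quotient is smooth, and then matching the mirror arrangement and pointwise-stabilizer orders to the branch configuration and multiplicities $m_i$---is exactly the mechanism underlying the tables in Uludag's papers, so in substance you are reconstructing the content of the cited references rather than offering a genuinely different route.

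Two minor comments. First, your remark that the fiber of $\varphi$ over $p$ is a single point is a little quick: what is true is that for a Galois chart the group acts transitively on the fiber, and the isotropy group of any preimage has the same order $\beta(p)$; one may as well pass to the minimal (simply connected) chart, where indeed a single point lies over $p$ with stabilizer all of $G$. Second, the ``triangle orbifold on the distinguished exceptional $\bP^1$'' picture is a nice heuristic for why the defects $\frac{1}{m_i}+\frac{1}{m_j}+\frac{1}{n}-1$ etc.\ control finiteness and why the order comes out as (small constant)$\times$(defect)$^{-2}$, but turning it into an honest computation still requires the case-by-case bookkeeping you acknowledge at the end; the alternative you mention (build the cover explicitly by normalizing a fiber product of cyclic covers and count degrees) is closer to how Uludag actually tabulates these numbers.
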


\begin{proof}
See \cite{Uludag2} Table 2.3, \cite{Uludag}, \cite{Campana10} and
\cite{Kobayashi}.
\end{proof}
Let $C_{1},\dots$ be disjoint reduced divisors on $X$ whose irreducible
components are $(-2)$-curves. There exist a surface $X'$ and map
$X\rightarrow X'$ such that the $C_{i}$'s are contracted onto
$ADE$-singularities and that is an isomorphism outside. Let $a_{n}$
(resp $d_{n}$, $e_{n}$) be the number of $A_{n}$ (resp. $D_{n},\, E_{n}$)
singularities on $X'$. 

\begin{prop}
\label{ChernNumberOrbi}
(\cite{Megyesi}, 1.8). The surface $X'$ has a natural structure of orbifold $\mathcal{X}$
and its Chern numbers are $c_{1}^{2}(\mathcal{X})=c_{1}^{2}(X)$ and:

\begin{equation}
\label{Chernnumb}
c_{2}(\mathcal{X})=c_{2}(X)-\sum(n+1)(a_{n}+d_{n}+e_{n})+\sum\frac{a_{n}}{n+1}+\frac{d_{n}}{4(n-2)}+\frac{e_{6}}{24}+\frac{e_{7}}{48}+\frac{e_{8}}{120}.
\end{equation}

\end{prop}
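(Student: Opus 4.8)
The plan is to build the orbifold structure on $X'$ from the local form of the singularities and then read off the two Chern numbers separately. Recall that a connected reduced configuration of $(-2)$-curves whose dual graph is an $ADE$ Dynkin diagram contracts to a rational double point, which analytically is a quotient singularity $\bC^{2}/G$ for a finite subgroup $G\subset SL_{2}(\bC)$: the cyclic group of order $n+1$ for $A_{n}$, the binary dihedral group of order $4(n-2)$ for $D_{n}$, and the binary tetrahedral, octahedral and icosahedral groups, of orders $24,48,120$, for $E_{6},E_{7},E_{8}$. Since such $G$ acts freely in codimension one, the quotient map $\bC^{2}\to\bC^{2}/G$ is unramified away from the origin, hence an orbifold chart compatible with the empty boundary divisor. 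Together with the identity charts on the smooth locus, these charts cover $X'$ and define the orbifold $\mathcal{X}=(X',0)$, whose isotropy group at a singular point $p$ is the corresponding binary polyhedral group, so that $\beta(p)=|G|$ equals $n+1,\ 4(n-2),\ 24,\ 48,\ 120$ according to the type (the same orders that underlie the table in Lemma \ref{Beta}).

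For the first Chern number one uses that rational double points are canonical with discrepancy zero: if $f\colon X\to X'$ is the minimal resolution then $K_{X}=f^{*}K_{X'}$. Since $K_{\mathcal{X}}=\pi^{*}K_{X'}$, we obtain $c_{1}^{2}(\mathcal{X})=(\pi^{*}K_{X'})^{2}=K_{X'}^{2}=(f^{*}K_{X'})^{2}=K_{X}^{2}=c_{1}^{2}(X)$; equivalently, contracting $(-2)$-curves leaves $K^{2}$ unchanged.

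For the second Chern number, apply the orbifold Gauss--Bonnet formula of Subsection \ref{SubsectionChernClasses}. As the boundary divisor is empty it reduces to $c_{2}(\mathcal{X})=e(X')-\sum_{p\in S}\bigl(1-\tfrac{1}{\beta(p)}\bigr)$, the sum running over the $ADE$ points of $X'$. To evaluate $e(X')$ topologically, note that $f$ collapses each connected component of $\bigcup_{i}C_{i}$ to a point, and that a component whose dual graph is an $ADE$ diagram with $n$ vertices is a tree of $n$ smooth rational curves meeting transversally with no triple point, hence of Euler characteristic $2n-(n-1)=n+1$. Therefore
\begin{equation*}
e(X')=e(X)-\sum_{n}(n+1)(a_{n}+d_{n}+e_{n})+\sum_{n}(a_{n}+d_{n}+e_{n})=e(X)-\sum_{n}n\,(a_{n}+d_{n}+e_{n}).
\end{equation*}
Substituting the values of $\beta(p)$ and using $c_{2}(X)=e(X)$, the isotropy term contributes $-\sum_{n}(a_{n}+d_{n}+e_{n})+\bigl(\sum_{n}\tfrac{a_{n}}{n+1}+\tfrac{d_{n}}{4(n-2)}+\tfrac{e_{6}}{24}+\tfrac{e_{7}}{48}+\tfrac{e_{8}}{120}\bigr)$, and adding this to the expression for $e(X')$ yields exactly formula \eqref{Chernnumb}.

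The only nonformal ingredient is the identification of the local uniformizing groups of $ADE$ singularities and their orders; this is classical (McKay correspondence, binary polyhedral groups) and is what the references of Lemma \ref{Beta} provide. Everything else is bookkeeping: crepancy of rational double point resolutions for the $c_{1}^{2}$ statement, and additivity of the Euler characteristic together with the combinatorics of $ADE$ trees for $c_{2}$. A minor point worth checking is that disjointness of the $C_{i}$ lets one perform the contractions simultaneously and produce exactly the claimed $ADE$ singularities, and that the charts above glue to an honest orbifold atlas on $X'$; both are standard.
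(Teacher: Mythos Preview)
The paper does not actually prove this proposition: it is stated with a citation to \cite{Megyesi} and no argument is given. Your proof is correct and is precisely the standard argument one would supply---the orbifold structure comes from the quotient description of $ADE$ singularities by binary polyhedral groups, the equality $c_{1}^{2}(\mathcal{X})=c_{1}^{2}(X)$ from crepancy of the minimal resolution, and formula~\eqref{Chernnumb} from the orbifold Gauss--Bonnet formula together with the elementary Euler-characteristic count for a tree of $n$ rational curves. Nothing is missing.
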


The denominators $4(n-2)$, $24,\,48,\,120$ are the order of the
binary dihedral $BD_{4(n\text{\textminus}2)}$, the binary tetrahedral,
the binary octahedral, and the binary icosahedral group respectively.

\subsection{Orbifold Riemann-Roch}

Let $L$ be an orbifold line bundle on the orbifold $\mathcal{X}$ of dimension $n$. We will use Kawazaki's orbifold Riemann-Roch theorem \cite{Kawa} or To\"en's for Deligne-Mumford stacks \cite{Toen} using intersection theory on stacks.

\begin{thm}[\cite{Toen}]
Let $\mathcal{X}$ be a Deligne-Mumford stack with quasi-projective coarse moduli space and which has the resolution property (i.e every coherent sheaf is a quotient of a vector bundle). Let $E$ be a coherent sheaf on $\mathcal{X}$ then
$$\chi(\mathcal{X},E)=\int_{\mathcal{X}}\widetilde{ch}(E)\widetilde{Td}(T_{\mathcal{X}}).$$
\end{thm}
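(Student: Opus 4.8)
The plan is to prove this as the absolute case, over the base field $k$, of a Grothendieck--Riemann--Roch formula for the structure morphism $\mathcal{X}\to \mathrm{Spec}\,k$, following the strategy of reducing an intrinsic stacky statement to equivariant $K$-theory by means of a global quotient presentation (this is the route of To\"en). The first task is to pin down the target of the Riemann--Roch transformation: the tildes signal that $\widetilde{ch}$ and $\widetilde{Td}$ do \emph{not} live on $\mathcal{X}$ but on its inertia stack $I\mathcal{X}$ (equivalently the cyclotomic inertia). Concretely, the restriction of $E$ to a component of $I\mathcal{X}$ carries a canonical automorphism of finite order; I would decompose it into eigenbundles $E=\bigoplus_{\zeta}E_{\zeta}$ indexed by roots of unity and set
\[
\widetilde{ch}(E)=\sum_{\zeta}\zeta\cdot ch(E_{\zeta})\in A^{*}(I\mathcal{X})\otimes\overline{\bQ},
\]
while $\widetilde{Td}(T_{\mathcal{X}})$ is the ordinary Todd class of the fixed part of $T_{\mathcal{X}}|_{I\mathcal{X}}$ corrected by the inverse twisted Chern character of the moving part, and $\int_{\mathcal{X}}$ is read as $\int_{I\mathcal{X}}$. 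The formal groundwork is to check that these classes are well defined and additive on short exact sequences, so that both sides descend to a homomorphism out of $K_{0}(\mathcal{X})$.

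The key geometric input is the hypothesis of the resolution property together with the quasi-projectivity of the coarse moduli space. By the theorem of Totaro (and Edidin--Hassett--Kresch--Vistoli) these two conditions force an equivalence $\mathcal{X}\simeq[Z/G]$ for a quasi-projective scheme $Z$ equipped with a linearized action of $G=GL_{N,k}$. This is precisely where the resolution hypothesis is indispensable: it is what guarantees a global quotient presentation and thereby lets me trade the intrinsic problem for a $G$-equivariant one. Under this identification one has $K_{0}(\mathcal{X})=K_{0}^{G}(Z)$, the Euler characteristic $\chi(\mathcal{X},E)$ becomes the virtual dimension of the $G$-invariant part of the equivariant cohomology, and the components of $I\mathcal{X}$ are read off from the loci of pairs $(g,z)$ with $gz=z$, organized by conjugacy.

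I would then invoke equivariant localization. Applying the Atiyah--Segal/Thomason concentration theorem, or equivalently the Baum--Fulton--MacPherson--Edidin--Graham equivariant Riemann--Roch transformation, computes $\chi$ as a sum of contributions supported on the fixed loci indexed by elements of finite order---exactly the components of $I\mathcal{X}$. The eigenvalue twisting appearing in the localization denominators is what manufactures the factors $\zeta$ in $\widetilde{ch}$, while the Todd classes of the normal (moving) directions to the fixed loci assemble into the correction defining $\widetilde{Td}$. Matching this localization output, component by component of $I\mathcal{X}$, against $\int_{I\mathcal{X}}\widetilde{ch}(E)\,\widetilde{Td}(T_{\mathcal{X}})$ yields the asserted formula.

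The main obstacle is twofold. First, one must show that the class produced by localization is genuinely intrinsic, i.e. independent of the chosen presentation $[Z/G]$; this rests on the naturality of the Riemann--Roch transformation under change of group and on the compatibility of the construction with the double-coset description of the inertia stack. Second, and more delicate, is the precise bookkeeping of the twisted Todd contributions from the moving parts of $T_{\mathcal{X}}$: one must verify that the eigenvalue-corrected denominators coming out of localization reassemble, after summation over $\zeta$, into exactly the class denoted $\widetilde{Td}(T_{\mathcal{X}})$, with no stray factors arising from the relation between the tangent complex $T_{\mathcal{X}}$ on the stack and $T_{Z}$ on the presenting scheme. It is this matching of twisted Todd factors, rather than the formal setup, that carries the genuine content of the proof.
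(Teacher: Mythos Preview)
The paper does not prove this theorem; it is stated as a citation of To\"en's result and is used as a black box to derive the asymptotic $\chi(\mathcal{X},L^k)=\frac{c_1(L)^n}{n!}k^n+O(k^{n-1})$. There is therefore no ``paper's own proof'' to compare your proposal against.

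That said, your sketch is a reasonable outline of the strategy in the cited reference: pass to the inertia stack, use the resolution property and quasi-projectivity of the coarse space to obtain a global quotient presentation $[Z/G]$, identify the problem with equivariant $K$-theory, and apply equivariant localization/Riemann--Roch to produce the twisted Chern character and Todd contributions indexed by components of $I\mathcal{X}$. The two points you flag as obstacles---independence of the presentation and the bookkeeping of the twisted Todd factors---are indeed where the work lies in To\"en's paper, but none of this is carried out in the present article.
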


From this formula, we obtain the asymptotic:
$$\chi(\mathcal{X}, L^k)=\frac{c_1(L)^n}{n!}k^n+O(k^{n-1}),$$
using orbifold Chern classes.

It should be remarked that we use here the fact that we work only with effective orbifolds: the stabilizers are generically trivial therefore the correction terms of the orbifold Riemann-Roch do not affect the leading term.

We will apply this result to the case of orbifold surfaces $\mathcal{X}$ of general type with big orbifold cotangent bundle $\Omega_{\mathcal{X}}$. Indeed, let $T_\mathcal{X}$ be the orbifold tangent bundle. Then $\bP(T_\mathcal{X})$ is naturally an orbifold and we can apply the previous Riemann-Roch formula to the tautological line bundle $\mathcal{O}_{\bP(T_\mathcal{X})}(1)$. Orbifold Serre duality gives $$h^2(\mathcal{X}, S^m\Omega_{\mathcal{X}})=h^0(\mathcal{X}, S^m\Omega_{\mathcal{X}} \otimes  K_{\mathcal{X}}^{1-m}) \hookrightarrow h^0(\mathcal{X}, S^m\Omega_{\mathcal{X}}).$$

As a corollary, if $c_1^2(\mathcal{X})-c_2(\mathcal{X})>0$ then  $$H^0(S^m\Omega_{\mathcal{X}}) \geq c.m^3$$ for a positive constant $c$.

\subsection{Orbifold hyperbolicity}
In the context of manifolds, complex hyperbolicity is concerned with the geometry of rational, elliptic curves and more generally entire curves i.e. holomorphic maps from $\bC$. For orbifold hyperbolicity, algebraic curves are replaced by orbifold Riemann surfaces $\mathcal{C}$ i.e. the data of a Riemann surface $C$ of genus $g\geq 0$ and $r$ points $p_1, \dots, p_r \in X$ marked by orders of stabilizer groups $m_1,\dots,m_r \geq 2$ corresponding to the $\bQ$-divisor $\Delta= \sum_i \left(1-\frac{1}{m_i}\right)p_i$.

\begin{defn}
An orbifold Riemann surface $\mathcal{C}$ is \emph{ rational} (resp. \emph{elliptic}) if \\
$\deg(K_\mathcal{C})<0$ i.e. $2g-2+\sum_i \left(1-\frac{1}{m_i}\right) <0$ (resp. $\deg(K_\mathcal{C})=0$).
\end{defn}

\begin{example}
A case-by-case check gives that $(C,\Delta)$ is elliptic if it is isomorphic to one of the following orbifold curves:
\begin{itemize}
\item $(E,\emptyset)$ where $E$ is an elliptic curve.
\item $(\bP^1, \left(1-\frac{1}{m_1} \right)\{0\}+\left(1-\frac{1}{m_2} \right)\{1\}+\left(1-\frac{1}{m_3} \right)\{\infty\}$ where $(m_1, m_2, m_3)$ is either $(2, 3, 6), (2, 4, 4), (3, 3, 3)$.
\item $(\bP^1, \left(1-\frac{1}{2} \right)\{0\}+\left(1-\frac{1}{2} \right)\{1\}+\left(1-\frac{1}{2} \right)\{p_3\}+\left(1-\frac{1}{2} \right)\{\infty\}$ with $p_3 \in \bC \setminus \{0, 1\}$.
\end{itemize}
\end{example}

Recall that an \emph{orbifold  map} between orbifolds $f: \mathcal{X}_1 \to \mathcal{X}_2$ is a map between the underlying spaces $X_1$ and $X_2$ which lifts to an equivariant map in orbifold charts.

\begin{defn}
An orbifold map $p: \mathcal{X}_1 \to \mathcal{X}_2$ is an \emph{orbifold covering} if every $x \in X_2$ is in some $U \subset X_2$ such that for every component $V$ of $p^{-1}(U)$ the corresponding orbifold chart $\phi_1: \tilde{V} \to V$ verifies that $p\circ \phi_1: \tilde{V} \to U$ is an orbifold chart for $U$.
\end{defn}

\begin{defn}
Let $\mathcal{X}$ be an orbifold. An orbifold rational (resp. elliptic) curve in  $\mathcal{X}$ is the image of an orbifold map $f: \mathcal{C} \to \mathcal{X}$ where $\mathcal{C}$ is rational (resp. elliptic).
\end{defn}

A hyperbolic orbifold should not have rational and elliptic curves and more generally:

\begin{defn}
An orbifold $\mathcal{X}$ is \emph{(Brody-)hyperbolic} if there is no non-constant orbifold map $f: \bC \to \mathcal{X}$.
An orbifold $\mathcal{X}$ is \emph{quasi-hyperbolic} if the Zariski closure of the union of the images of orbifold maps $f: \bC \to \mathcal{X}$ is a proper sub-variety.
\end{defn}

\begin{example}
From the uniformization theorem available in the orbifold setting (see for example \cite{Far}), we have that an orbifold curve $\mathcal{C}$ is hyperbolic if and only if $\deg(K_\mathcal{C})>0$.
\end{example}

In the algebraic setting, we can generalize the notion of algebraic hyperbolicity to orbifolds:
\begin{defn}
A compact orbifold $\mathcal{X}$, with $\omega$ an orbifold hermitian metric, is \emph{algebraically hyperbolic} if there exists $\epsilon>0$ such that for any orbifold morphism $f: \mathcal{C} \to \mathcal{X}$ with $\mathcal{C}$ compact,
$$\deg(K_\mathcal{C})\geq \epsilon\int_C f^*\omega.$$
\end{defn}

We suggest the following generalization of Green-Griffiths-Lang conjecture in the orbifold context

\begin{conjecture}[Orbifold Green-Griffiths-Lang conjecture]\label{OGG}
Let $\mathcal{X}$ be an orbifold of general type. Then there exists a proper sub-orbifold $\mathcal{Z} \subset \mathcal{X}$ which contains the image of all holomorphic maps $f: \bC \to \mathcal{X}$.
\end{conjecture}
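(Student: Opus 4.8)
The plan is to transport Bogomolov's symmetric-differentials method and its refinement by McQuillan to the orbifold setting, aiming first at the natural special case of Conjecture \ref{OGG}: an orbifold surface $\mathcal{X}$ of general type with positive orbifold Segre number $s_2(\mathcal{X})=c_1^2(\mathcal{X})-c_2(\mathcal{X})>0$ should have all its orbifold rational and elliptic curves, and all images of orbifold maps $f\colon\bC\to\mathcal{X}$, contained in a proper sub-orbifold. The first step is essentially done above: by the orbifold Riemann--Roch estimate, $s_2(\mathcal{X})>0$ forces $h^0(\mathcal{X},S^m\Omega_{\mathcal{X}})\geq c\,m^3$, and combining this with orbifold Serre duality one produces, for $m\gg 0$, a nonzero orbifold symmetric differential $\omega\in H^0(\mathcal{X},S^m\Omega_{\mathcal{X}}\otimes\pi^*A^{-1})$ with $A$ ample on the coarse space $X$.

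The second step is the orbifold vanishing statement: every orbifold map $f\colon\bC\to\mathcal{X}$, and a fortiori every orbifold rational or elliptic curve, satisfies $f^*\omega\equiv 0$. In the manifold case this is the standard consequence of the lemma on logarithmic derivatives together with the ampleness of $A$; in the orbifold case $f$ lifts in orbifold charts, so $f^*\Omega_{\mathcal{X}}$ and the associated first main theorem and logarithmic-derivative estimates make sense, and the ramification of $f$ over $\lceil\Delta\rceil$ only improves the positivity one feeds into the argument. This is precisely the mechanism behind Theorem \ref{Quint}, carried out there for jet differentials of order two on the cyclic quintic; for the present purpose one uses order-one (symmetric) differentials on a general $\mathcal{X}$.

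The third and decisive step is to pass from ``$f$ satisfies one differential equation'' to ``$f$ is algebraically degenerate''. The zero divisor of $\omega$ cuts out a foliation $\mathcal{F}$ on $\mathcal{X}$ (away from its singular set), every orbifold entire curve is a leaf of $\mathcal{F}$, and one needs the orbifold version of the Bogomolov--McQuillan theory of foliated surfaces: an orbifold tautological inequality and second main theorem for the leaf relative to $K_{\mathcal{F}}$ and $K_{\mathcal{X}}$, followed by the classification of foliations on orbifold surfaces of general type with parabolic generic leaf. As in McQuillan's proof one expects that $s_2(\mathcal{X})>0$ makes the Chern numbers of $\mathcal{F}$ too small for a Zariski-dense parabolic leaf to exist, so that the union of parabolic leaves — in particular of orbifold rational and elliptic curves, via an orbifold refinement of Bogomolov's boundedness of low-genus curves in a foliation — is a proper subvariety.

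I expect this third step to be the real obstacle, for two reasons. First, the coarse space $X$ is only mildly singular (klt, with the $ADE$ points and isotropy orders $\beta(p)$ of Lemma \ref{Beta}), but McQuillan's index and intersection computations must then be run on the stack $\mathcal{X}$, or on a resolution while carrying along the discrepancies $a_i$ of the klt condition; the Gauss--Bonnet and Riemann--Roch corrections of Proposition \ref{ChernNumberOrbi} are exactly the bookkeeping one has to insert into the Miyaoka--Bogomolov inequalities for $\mathcal{F}$. Second, the logarithmic-derivative lemma and the Ahlfors--Schwarz reparametrization must be adapted so that an entire orbifold curve is handled with the truncated counting function attached to $\lceil\Delta\rceil$; this orbifold Nevanlinna theory is what is still missing even to upgrade Theorem \ref{Quint} to an actual localization of the curve. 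In dimension $\geq 3$ the conjecture is as open as its classical counterpart, so I would aim only at the surface case, where the template of Bogomolov, Lu--Yau, Demailly--El Goul and McQuillan can plausibly be reproduced orbifold by orbifold.
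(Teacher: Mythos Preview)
The statement you are asked to prove is a \emph{conjecture} in the paper, not a theorem: the authors explicitly introduce it as ``We suggest the following generalization of Green-Griffiths-Lang conjecture in the orbifold context'' and offer no proof. There is therefore no ``paper's own proof'' to compare against. What the paper does establish is the special case $s_2(\mathcal{X})>0$ for orbifold surfaces (Theorem~\ref{segre}), and even that is not proved in the paper but attributed to McQuillan's extension of his foliation techniques to Deligne--Mumford stacks \cite{McQ1}.

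Your proposal is not a proof but a research outline, and you are candid about this: you correctly identify that step three --- the orbifold foliation theory needed to pass from a single differential equation to algebraic degeneracy --- is where the actual work lies, and you note that in dimension $\geq 3$ the problem is as open as the classical conjecture. Your outline for the surface case with $s_2(\mathcal{X})>0$ is essentially a description of what McQuillan's \cite{McQ1} is said to contain, so in that sense you have located the right argument; but you have not supplied it, and neither does the paper. For the full conjecture (arbitrary $\mathcal{X}$ of general type, no Segre hypothesis), nothing in your proposal or in the paper gives a route: without $s_2>0$ one has no symmetric differentials to start the argument, and the jet-differential approach you allude to via Theorem~\ref{Quint} only produces a single differential equation, not degeneracy. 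So the genuine gap is structural: the statement is open, and your text is a plausible plan of attack on a known special case rather than a proof of the conjecture.
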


Even in the setting of manifolds, this conjecture is widely open. An important result was obtained by McQuillan \cite{McQ0} with the confirmation of the conjecture for surfaces of general type with $c_1^2-c_2 >0$. The positivity of the second Segre number ensures by a Riemann-Roch computation as already explained above that $H^0(S^m\Omega_X)>cm^3$. McQuillan has extended his result to $2$-dimensional Deligne-Mumford stacks with projective moduli \cite{McQ1} (see also \cite{Rousseau} and \cite{Rousseau2}).

Since orbifolds are examples of DM stacks, a consequence of his result that we shall use is
\begin{thm}\label{segre}
Let $\mathcal{X}$ be an orbifold surface of general type with positive orbifold second Segre number $$c_1^2(\mathcal{X})-c_2(\mathcal{X})>0.$$ Then there exists a proper sub-orbifold $\mathcal{Z} \subset \mathcal{X}$ which contains the image of all orbifold maps $f: \bC \to \mathcal{X}$, in other terms: the orbifold $\mathcal{X}$ is quasi-hyperbolic.
\end{thm}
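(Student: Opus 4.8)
The strategy is to reduce the statement to McQuillan's theorem on the Green--Griffiths conjecture for $2$-dimensional Deligne--Mumford stacks with projective coarse moduli space \cite{McQ1}. The key point is that an orbifold $\mathcal{X}$ in the sense of the preceding subsections is, by the first Remark following the definition of orbifolds, exactly a smooth Deligne--Mumford stack with coarse moduli space the normal surface $X$; in particular McQuillan's result applies verbatim to $\mathcal{X}$ once we verify that the relevant positivity hypothesis is met. That hypothesis, as recalled above, is precisely that the sheaf of symmetric orbifold differentials grows maximally, $h^0(\mathcal{X},S^m\Omega_{\mathcal{X}})\geq c\,m^3$ for some $c>0$ and $m$ large. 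So the first step is to quote the orbifold Riemann--Roch computation already carried out in Subsection 2.3: from To\"en's Riemann--Roch formula on $\bP(T_{\mathcal{X}})$ applied to $\mathcal{O}_{\bP(T_{\mathcal{X}})}(1)$, together with orbifold Serre duality which kills $h^2$, one gets $h^0(\mathcal{X},S^m\Omega_{\mathcal{X}})\geq \frac{c_1^2(\mathcal{X})-c_2(\mathcal{X})}{?}m^3+O(m^2)$, so that the hypothesis $c_1^2(\mathcal{X})-c_2(\mathcal{X})>0$ gives exactly the required cubic growth.

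Second, I would feed this into McQuillan's theorem to conclude that there is a proper algebraic subset $Z\subsetneq X$ containing the images of all holomorphic maps $\bC\to \mathcal{X}$ (in the orbifold sense, i.e. lifting equivariantly in charts). One must then promote $Z$ to a genuine sub-orbifold $\mathcal{Z}\subset\mathcal{X}$: take the reduced subscheme structure on $Z$, restrict the divisor $\Delta$ to the components of $Z$ not contained in $\lceil\Delta\rceil$ and give the curves of $Z$ lying inside $\lceil\Delta\rceil$ the induced multiplicities; by Example \ref{klt} (or directly, since on a curve every klt pair is an orbifold) this is again an orbifold, and by construction every orbifold entire curve $f:\bC\to\mathcal{X}$ factors through $\mathcal{Z}$. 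This gives the quasi-hyperbolicity statement in the stated form.

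The main obstacle is not conceptual but one of translation and generality: one must be sure that McQuillan's foliated/tautological-inequality machinery, developed for DM stacks with \emph{projective} moduli space, genuinely covers the orbifolds considered here, whose coarse space $X$ is only assumed normal and quasi-projective (in Theorem \ref{segre} it is the surface $X'$ with $ADE$ singularities, hence projective, so this is fine in the applications). The safest route is to cite \cite{McQ1} together with \cite{Rousseau}, \cite{Rousseau2} as already done in the text, and to note explicitly that $\mathcal{X}$ being an \emph{effective} orbifold (generically trivial stabilizers) is what guarantees both that $X$ is the honest coarse moduli space and that the leading term of the orbifold Riemann--Roch is unaffected by the stacky correction terms — a point already flagged above. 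With these observations in place the proof is a matter of assembling the cited ingredients.
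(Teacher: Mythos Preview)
Your proposal is correct and matches the paper's approach: the paper does not give an independent proof of Theorem \ref{segre} but presents it as a direct consequence of McQuillan's extension \cite{McQ1} of his Green--Griffiths result to $2$-dimensional Deligne--Mumford stacks with projective moduli (with the orbifold Riemann--Roch from Subsection 2.3 supplying the cubic growth of symmetric differentials), together with the references \cite{Rousseau}, \cite{Rousseau2}. Your write-up simply makes explicit the assembly of these cited ingredients; the only thing to trim is the discussion of promoting $Z$ to a sub-orbifold $\mathcal{Z}$, which the paper leaves implicit and which is immediate once one has the proper Zariski-closed locus.
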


Let $\mathcal{C}\to \mathcal{X}$ be an orbifold rational or elliptic curve. The subjacent space to $\mathcal{C}$ is $\mathbb {P}^1$ or a smooth elliptic curve, thus such a curve gives rise to an entire orbifold curve $\bC \to \cX$. Therefore a quasi-hyperbolic orbifold is algebraically quasi-hyperbolic in the sense that it has only finitely many rational and elliptic orbifold curves.

\begin{example}
Following \cite{Bogomolov2} (see also \cite{Rousseau}), we see that nodal surfaces in $\bP^3$ of degree $d$ with $l$ nodes provide examples of orbifolds with positive orbifold second Segre number if
$$l> \frac{8}{3}\left(d^2-\frac{5}{2}d \right).$$ It should be remarked that this numerical condition can be satisfied only for $d \geq 6$. Moreover, in \cite{Bogomolov2}, the authors claim that a subspace $H^0 (\mathcal{X},S^m \Omega_{\mathcal{X}})$ of symmetric orbifold differentials can be extended to the resolutions of such surfaces but it seems to us that there is a gap in the argument of \cite{Bogomolov2} Lemma 2.2.
\end{example}

\subsection{Orbifold jet differentials}
Since Bloch \cite{Blo}, jet differentials have turned out to be a useful tool to study hyperbolic properties of complex manifolds. We would like to use these technics in the orbifold setting extending the construction of Demailly \cite{De95} for manifolds to this situation.

Start with an orbifold $\mathcal{X}$ and an orbifold subbundle $\mathcal{V} \subset T_\mathcal{X}$. Then we consider the orbifold $\tilde{\mathcal{X}}:=\bP(\mathcal{V})$ with its natural orbifold line bundle
$$\mathcal{O}_{\tilde{\mathcal{X}}}(-1).$$ We have an orbifold map $\pi: \tilde{\mathcal{X}} \to \mathcal{X}.$ We define $$\tilde{\mathcal{V}}:=\pi_*^{-1}\mathcal{O}_{\tilde{\mathcal{X}}}(-1) \subset T_{\tilde{\mathcal{X}}},$$
here $\pi_*$ is the differential map between $T_{\tilde{\mathcal{X}}}$  and $\pi^* T_{\mathcal{X}}$ (see \cite{DR}).

Therefore, as in the case of manifolds, we have an inductive process which gives the orbifold Demailly-Semple jet bundles:
$$(\mathcal{X}_0,\mathcal{V}_0)=(\mathcal{X},\mathcal{V}), (\mathcal{X}_k,\mathcal{V}_k)=(\tilde{\mathcal{X}}_{k-1},\tilde{\mathcal{V}}_{k-1}).$$

Taking $\mathcal{V}_0:=T_{\mathcal{X}_0}$, one can define jet differentials $E_{k,m}$ as the direct image sheaf
$$E_{k,m}:= (\pi_{0,k})_*\mathcal{O}_{{\mathcal{X}_k}}(m),$$
where $\pi_{0,k}:= \mathcal{X}_k \to \mathcal{X}$ is the natural projection map.

\begin{rem}
Alternatively, following \cite{De95} one could construct Green-Griffiths jet differentials $E_{k,m}^{GG}$ on orbifolds and define $E_{k,m}$ as the subbundle of orbifold jet differentials which are invariant under the reparametrization of germs of orbifold curves.
\end{rem}

One can use the orbifold Riemann-Roch to compute the Euler characteristic. In the case of orbifold surfaces, one obtains
$$\chi(\mathcal{X}, E_{k,m})=m^{k+2}(\alpha_k c_1^2 - \beta_k c_2)+ O(m^{k+1}),$$
where $c_1$ and $c_2$ denote the orbifold Chern classes and $\alpha_k$, $\beta_k$ are just real numbers depending on $k$.
 
 Using the semi-stability of $T_\mathcal{X}$ with respect to $K_\mathcal{X}$, which is also true for orbifolds (\cite{Kobayashi} and \cite{TY}), one can control the higher cohomology as in \cite{GG80}. In particular, one obtains for an orbifold surface of general type \cite{De95}
\begin{equation}\label{2jet}
h^0(\mathcal{X}, E_{2,m})\geq \frac{m^4}{648}(13c_1^2-9c_2)+O(m^3).
\end{equation}

In the case of smooth hypersurfaces of degree $d$ of $\bP^3$, one obtains that $$h^0(X, E_{2,m}) \geq C.m^4$$ for $d \geq 15$ (and $C>0$). In general, the bigger $c_1^2 /c_2$ is, the easier it is to obtain global jet differentials. This is another illustration that Horikawa surfaces are the most difficult cases to deal with. Therefore, for smooth surfaces in $\bP^3$ of degree $d=5$, one has to take $k$ much larger. In contrast, we will provide below an example of an orbifold surface of $\bP^3$ of degree $5$ where $h^0(\mathcal{X}, E_{2,m})\geq C.m^4$ (for $C>0$).

As in the case of manifolds, one obtains as a consequence of Ahlfors-Schwarz lemma:
\begin{thm}[\cite{McQ1}, \cite{Rousseau2}]\label{vanish}
Let $\pi: \mathcal{X} \to X$ be a projective orbifold and an ample line bundle $A$ on $X$ such that  $H^0(\mathcal{X}_k, \mathcal{O}_{\tilde{\mathcal{X}_k}}(m)\otimes \pi_{0,k}^*A^{-1})\simeq H^0(\mathcal{X}, E_{k,m} \otimes A^{-1})$ has a non-zero section $P$. Then every orbifold entire curve $f: \bC \to \mathcal{X}$ must satisfy the algebraic differential equation $P(f)=0$.
\end{thm}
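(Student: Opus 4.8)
The plan is to run the orbifold version of the classical argument that combines the Ahlfors--Schwarz lemma with the projection formula, exactly as in the manifold case treated by Demailly \cite{De95} and extended to Deligne--Mumford stacks in \cite{McQ1}, \cite{Rousseau2}. First I would fix a non-zero section $P \in H^0(\mathcal{X}_k, \mathcal{O}_{\tilde{\mathcal{X}_k}}(m)\otimes \pi_{0,k}^*A^{-1})$, which exists by hypothesis, and let $f: \bC \to \mathcal{X}$ be an orbifold entire curve. The key observation is that $f$ lifts canonically to an orbifold entire curve $f_{[k]}: \bC \to \mathcal{X}_k$ into the $k$-th Demailly--Semple tower: this is because the lifting construction is purely local and functorial, so it can be performed inside each orbifold chart, where it reduces to the usual lifting of $k$-jets; the equivariance conditions defining orbifold maps are preserved because derivatives of equivariant maps are again equivariant. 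One then has to check that the pullback $f_{[k]}^*\mathcal{O}_{\tilde{\mathcal{X}_k}}(1)$ relates to the derivative $f'$ in the standard way, so that $P(f) := f_{[k]}^*P$ is a holomorphic section of $f_{[k]}^*(\mathcal{O}_{\tilde{\mathcal{X}_k}}(m)\otimes \pi_{0,k}^*A^{-1}) = f_{[k]}^*\mathcal{O}_{\tilde{\mathcal{X}_k}}(m)\otimes f^*A^{-m/ \ldots}$, i.e. a function times a negative power of an ample bundle pulled back to $\bC$.

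Next I would argue by contradiction: suppose $P(f) \not\equiv 0$. The Ahlfors--Schwarz lemma (in the form used by McQuillan, valid on $\bC$ with the metric induced from the tautological bundle on the jet tower) produces an upper bound for the "jet-curvature" of $f_{[k]}$ in terms of $f^*A$; more precisely, a non-vanishing jet differential with values in $A^{-1}$ forces the entire curve to be, in a suitable Nevanlinna-theoretic sense, "slower" than any curve can be, which is a contradiction unless $f$ is constant or $P(f)\equiv 0$. The cleanest route is: the section $P$ gives a singular hermitian metric on $\mathcal{O}_{\tilde{\mathcal{X}_k}}(-1)$ pulled back along $f_{[k]}$ whose curvature dominates $c \, f^*\omega_A$ for some $c>0$ away from the zeros of $P(f)$; then the tautological identification of $f_{[k]}^*\mathcal{O}(-1)$ with (a twist of) the tangent bundle of $\bC$ gives an Ahlfors--Schwarz inequality contradicting the fact that $\bC$ carries no complete metric of curvature bounded above by a negative constant. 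Hence $P(f)\equiv 0$, which is precisely the algebraic differential equation $P(f)=0$ satisfied by $f$.

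The main obstacle, and the step deserving the most care, is justifying the Ahlfors--Schwarz estimate in the orbifold setting rather than taking it as a black box: one must make sense of the induced (possibly degenerate, orbifold) metric near the branch divisor $\lceil \Delta \rceil$ and near the singular points of $X$, and verify that the local uniformizing charts — which exist by the very definition of an orbifold given above — allow one to pull everything back to a smooth manifold picture where the classical lemma applies, then descend. Concretely, one checks the estimate upstairs on each chart $\varphi: U \to \varphi(U)$, where $\mathcal{O}_{\tilde{\mathcal{X}_k}}(1)$ and $A$ become honest line bundles and $f$ lifts (after base change) to an honest curve or a curve into $U$; the compatibility of the charts then glues the local inequalities. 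Once this is done, the conclusion $P(f)=0$ is formal. I expect no difficulty with the lifting $f \mapsto f_{[k]}$ or with the projection formula identification $H^0(\mathcal{X}_k,\mathcal{O}(m)\otimes \pi_{0,k}^*A^{-1}) \simeq H^0(\mathcal{X}, E_{k,m}\otimes A^{-1})$, since the latter is just the definition of $E_{k,m}$ as a direct image together with the projection formula; the real content is entirely in the curvature/Ahlfors--Schwarz step, which is why I would cite \cite{McQ1} and \cite{Rousseau2} for the technical heart and only indicate how the orbifold charts reduce it to the known case.
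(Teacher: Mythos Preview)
Your proposal is correct and follows exactly the approach the paper indicates: the paper does not give its own proof of this theorem but simply records it as a consequence of the Ahlfors--Schwarz lemma, citing \cite{McQ1} and \cite{Rousseau2} for the orbifold/stack version. Your plan---lift $f$ to $f_{[k]}$ in the Demailly--Semple tower via orbifold charts, pull back $P$, and apply Ahlfors--Schwarz to derive a contradiction from $P(f)\not\equiv 0$---is precisely the argument those references carry out, so you are supplying the details the paper deliberately omits.
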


The generalization of jet differentials to orbifolds can be applied to classical problems which are not yet solved by Nevanlinna theory. Among these problems, we suggest the following one. Consider $f: \bC \to \bP^n$ a holomorphic curve which ramifies with multiplicity divisible by $m_i$ over $H_1,\dots,H_q$, hypersurfaces of degrees $d_i$ of $\bP^n$ in general position. Then it defines an orbifold entire curve $f:\bC \to (\bP^n, \Delta)$ where $\Delta:= \sum_i \left(1-\frac{1}{m_i} \right) H_i.$ The condition $$K_{\bP^n}+\Delta >0,$$ is equivalent to 
$$\sum_i \left(1-\frac{1}{m_i} \right)d_i > n+1.$$ In this setting, Conjecture \ref{OGG} takes the following form

\begin{conjecture}\label{Cartan}
Let $H_1,\dots,H_q$ be hypersurfaces of degrees $d_i$ of $\bP^n$ in general position. Assume that $f: \bC \to \bP^n$ is a holomorphic curve which ramifies over $H_i$ with multiplicity divisible by $m_i$ and that
$$\sum_i \left(1-\frac{1}{m_i} \right)d_i > n+1,$$
then $f$ is algebraically degenerate i.e. its image is contained in a proper algebraic hypersurface.
\end{conjecture}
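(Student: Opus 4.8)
The statement is, after a reduction, an instance of the orbifold Green--Griffiths--Lang conjecture (Conjecture \ref{OGG}), so the plan is to set up that reduction and then attack $\mathcal{X}=(\bP^n,\Delta)$ with the tools assembled above. Since the $H_i$ are smooth and in general position, $\lceil\Delta\rceil=\sum_iH_i$ has normal crossings and, by Example \ref{nc}, $\mathcal{X}:=(\bP^n,\Delta)$ with $\Delta=\sum_i(1-\frac1{m_i})H_i$ is an orbifold; a holomorphic $f:\bC\to\bP^n$ ramifying over each $H_i$ with multiplicity divisible by $m_i$ is exactly an orbifold morphism $f:\bC\to\mathcal{X}$, because near a point of $f^{-1}(H_i)$ it lifts through the local $m_i$-th root chart. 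The hypothesis $\sum_i(1-\frac1{m_i})d_i>n+1$ says precisely that $K_\mathcal{X}=\pi^*(K_{\bP^n}+\Delta)=\pi^*\mathcal{O}\big(\sum_i(1-\frac1{m_i})d_i-(n+1)\big)$ is an ample $\bQ$-divisor, so $\mathcal{X}$ is of general type; algebraic degeneracy of every such $f$ is then exactly the conclusion of Conjecture \ref{OGG} for $\mathcal{X}$.

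For surfaces ($n=2$) this becomes provable in an explicit range by invoking Theorem \ref{segre}: I would compute the orbifold second Segre number and locate where it is positive. One has $c_1^2(\mathcal{X})=(K_{\bP^2}+\Delta)^2=\big(\sum_i(1-\frac1{m_i})d_i-3\big)^2$, and $c_2(\mathcal{X})$ is furnished by the orbifold Gauss--Bonnet formula of \S\ref{SubsectionChernClasses}, $$c_2(\mathcal{X})=e(\bP^2)-\sum_i\Big(1-\frac1{m_i}\Big)e(H_i\setminus S)-\sum_{p\in S}\Big(1-\frac1{\beta(p)}\Big),$$ where $S$ is the set of intersection points of the $H_i$ and the isotropy orders $\beta(p)$ are read off from Lemma \ref{Beta} (for lines in general position every crossing is of type $A_1$, so $\beta(p)=m_im_j$). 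Comparing these and applying Theorem \ref{segre} yields Conjecture \ref{Cartan} for $n=2$ whenever the resulting explicit inequality among $q$, the $d_i$ and the $m_i$ holds; the borderline case of five lines with $m_i=5$ --- which is of general type since $\sum_{i=1}^5(1-\frac15)=4>3$ --- is handled separately, and with a sharper conclusion, in Theorem \ref{Quint}.

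For the remaining cases the plan is the orbifold jet-differential route developed above. Orbifold Riemann--Roch gives $\chi(\mathcal{X},E_{k,m})$ as a polynomial in $m$ whose top-degree coefficient is a universal combination of the orbifold Chern numbers of $\mathcal{X}$; since $\Delta$ is a large fractional multiple of a very ample class, $c_1(\mathcal{X})$ is ``big'', and one expects this leading term to become positive once $k$ is large enough (for $n=2$ already $k=2$ often suffices by \eqref{2jet}). Controlling $h^1$ and $h^2$ --- and the higher cohomology when $n\ge3$ --- via the semistability of $T_\mathcal{X}$ with respect to $K_\mathcal{X}$ (valid for orbifolds, as recalled) together with holomorphic Morse inequalities, one produces a non-zero section of $E_{k,m}\otimes A^{-1}$ for an ample $A$; Theorem \ref{vanish} then forces every orbifold entire curve $f:\bC\to\mathcal{X}$ into the zero locus of that section in the $k$-jet space $\mathcal{X}_k$.

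The hard part, exactly as in the classical Green--Griffiths--Lang problem, is the \emph{descent}: a jet differential equation only confines the lifted curve to a proper subvariety of $\mathcal{X}_k$, and one must show that finitely many independent such equations cut out a locus whose image in $\bP^n$ is a proper hypersurface. This needs either a base-locus and positivity analysis of $E_{k,m}\otimes A^{-1}$ strong enough to kill the vertical components, or, for surfaces, McQuillan's tautological-inequality and foliation argument adapted to the orbifold setting (as in \cite{McQ1}); the fact that Theorem \ref{Quint} produces only an order-$2$ equation rather than outright algebraic degeneracy is precisely a symptom of this gap. A purely Nevanlinna-theoretic attack for hyperplanes ($d_i=1$) via Cartan's second main theorem with truncated counting functions only yields (linear, hence algebraic) degeneracy under the stronger hypothesis $q>n+1+n\sum_i\frac1{m_i}$, which for $n\ge2$ falls short of the conjectured threshold $q>n+1+\sum_i\frac1{m_i}$ --- a further indication that genuine jet differentials, and a new idea for the descent in dimension $\ge3$, are needed, and why the statement remains a conjecture.
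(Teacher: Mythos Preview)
The statement you are attempting is labeled \emph{Conjecture} in the paper, and the paper does not prove it: immediately before stating it, the authors write that ``In this setting, Conjecture \ref{OGG} takes the following form'', i.e.\ Conjecture \ref{Cartan} is presented merely as the specialization of the orbifold Green--Griffiths--Lang conjecture to $\mathcal{X}=(\bP^n,\Delta)$. There is therefore no proof in the paper to compare against.

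Your write-up is not a proof either, and to your credit you say so in the last paragraph. Your reduction in the first paragraph is exactly the content the paper intends: $f$ ramifying with multiplicity divisible by $m_i$ over $H_i$ is an orbifold entire curve into $(\bP^n,\Delta)$, and the numerical hypothesis is $K_{\bP^n}+\Delta>0$. The partial results you sketch (positivity of $s_2(\mathcal{X})$ via Theorem \ref{segre} in favorable ranges, existence of order-$2$ jet differentials via \eqref{2jet}, and the Cartan second-main-theorem bound for hyperplanes) are precisely the tools the paper deploys in neighboring sections, and you correctly identify the descent from a jet-differential equation to genuine algebraic degeneracy as the missing step. One small caution: your claim that ``for lines in general position every crossing is of type $A_1$'' presumes $n=2$; for $n\ge 3$ the normal-crossing strata are higher-dimensional and Lemma \ref{Beta} does not apply as stated. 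In short, your proposal is an accurate summary of why the statement is a conjecture and what is known toward it, not a proof---and none is expected, since the paper offers none.
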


\subsection{Horikawa orbifolds}
Let us recall that for minimal surfaces of general type, we have the inequalities
$5c_1^2+36 \geq c_2$ if $c_1^2$ is even,
$5c_1^2+ 30 \geq c_2 $ if $c_1^2$ is odd.
Surfaces realizing the equalities are called Horikawa surfaces. Horikawa gave a classification of these surfaces \cite{Hor76}:
\begin{thm}
\label{HorikawaClassif}
Let $Z$ be a Horikawa surface with $c_1^2$ even, then there is a birational map $Z \to X'$ where $X'$ is a surface branched along a curve with $ADE$ singularities, more precisely $X'$ is either
\begin{enumerate}
\item a double covering of $\bP^2$ branched along an octic ($\chi=4$),
\item a double covering of $\bP^2$ branched along a curve of degree $10$ ($\chi=7$),
\item a double covering of $\bF_N$ branched along a curve of type $(6,2a)$, ($2a\geq -N$) ($\chi=3N+2a-1$).
\end{enumerate}
\end{thm}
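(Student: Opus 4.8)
The plan is to follow Horikawa's original argument \cite{Hor76}, whose core is the analysis of the canonical map of $Z$. I would begin with the two structural facts valid on the Noether line $c_1^2 = 2\chi - 6$: such a minimal $Z$ is regular, so $p_g = \chi - 1 \geq 3$; and the canonical map $\varphi := \varphi_{|K_Z|} \colon Z \dashrightarrow \bP^{p_g-1}$ cannot be birational onto its image, since a birational canonical map forces $c_1^2 \geq 3p_g - 7$, which fails once $p_g > 3$ (and for $p_g = 3$ birationality is excluded directly, $Z$ not being rational). Hence $\varphi$ is generically $d \colon 1$ onto an irreducible nondegenerate surface $W \subset \bP^{p_g-1}$ with $d \geq 2$; comparing $c_1^2 \geq d \cdot \deg W \geq d(p_g - 2)$ with $c_1^2 = 2(p_g-2)$ forces $d = 2$ and $\deg W = p_g - 2$, i.e. $W$ is a surface of minimal degree.

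Next I would invoke the classical classification of surfaces of minimal degree: $W$ is either $\bP^2$ --- embedded linearly (which forces $p_g = 3$) or by the second Veronese $v_2$ (which forces $p_g = 6$) --- or a rational normal scroll, possibly a cone, hence the image of a Hirzebruch surface $\bF_N$ under a linear system of the shape $|T + kF|$. Passing to the canonical model $Z'$ of $Z$, the map $\varphi$ becomes a finite double cover $\pi \colon Z' \to W$, and in the cone case one checks, as in \cite{Hor76}, that $\pi$ still factors through the desingularisation $\bF_N$. Writing the double cover relation $2K_{Z'} = \pi^{*}(2K_W + B)$ for the branch divisor $B$ and imposing $K_{Z'}^{2} = 2p_g - 4$ together with $\chi(\mathcal{O}_{Z'}) = \chi$ then determines $B$ in each case: for $W = \bP^2$ with the linear embedding, $\deg B = 8$ and $\chi = 4$; for $W = \bP^2$ with the Veronese embedding, $B$ has degree $10$ on the base plane and $\chi = 7$; for $W$ the scroll over $\bF_N$, $B \equiv 6T + 2aF$ with $2a \geq -N$ and $\chi = 3N + 2a - 1$.

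Finally I would translate the minimality of $Z$ into a constraint on the singularities of $B$. Since $Z' \to W$ is a finite double cover of a smooth surface, the singularities of $Z'$ sit exactly over those of $B$, and in the canonical resolution $Z \to Z'$ a $(-1)$-curve appears precisely when $B$ carries a singular point not among the simple ($ADE$) curve singularities; minimality of $Z$ therefore forces $B$ to have at worst $ADE$ singularities, $Z'$ to have at worst rational double points, and $Z \to Z'$ to be the minimal resolution. The delicate part of the proof is exactly this last step --- identifying which curve singularities of $B$ produce rational double points versus exceptional $(-1)$-curves on the double cover --- together with the verification that the cone cases reduce to the $\bF_N$ family and that $|K_Z|$ is base-point free; all of this is carried out in \cite{Hor76}, to which I refer for the complete argument.
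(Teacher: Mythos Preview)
The paper does not give its own proof of this classification; the theorem is simply quoted from Horikawa \cite{Hor76}. Your proposal is a correct outline of Horikawa's original argument (canonical map of degree $2$ onto a surface of minimal degree, then the del Pezzo--Bertini classification and the branch-curve computation), so it is entirely consistent with --- and in fact more detailed than --- the paper's treatment.
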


In the sequel, we will say that a Horikawa surface is of type $(i)$ according to the above classification.

It is therefore natural to associate orbifolds to Horikawa surfaces.
\begin{defn}
We say that an orbifold $\mathcal{X}=(X,\Delta)$ is Horikawa if there is a Horikawa surface $Z$ with a birational map $Z \to X'$ where $X' \to X$ is a branched covering with ramification divisor $\Delta$.
\end{defn}

\begin{example}
From Horikawa's theorem $(\bP^2, \left(1-\frac{1}{2} \right)C)$ where $C$ is a curve of degree $8$ or $10$ with at most $ADE$ singularities is a Horikawa orbifold.

It is possible to construct Horikawa surfaces with $c_1 ^2 =5$  as the (resolutions of) $5$-fold covers of $\bP^2$ branched along five lines. More generally, examples of Horikawa orbifolds are provided by $(\bP^2, \left(1-\frac{1}{5} \right)C)$ where $C$ is a curve of degree $5$ with at most nodes.
\end{example}

\section{Algebraic hyperbolicity}

\subsection{Horikawa surfaces with even first Chern number}

Having in mind Lang's conjecture claiming that there are only
a finite number of curves of genus $0$ or $1$ on surfaces of general
type, the aim of this section is to provide a lower bound on the genus
of a curve on some explicit surfaces. 

To study curves in Horikawa surfaces, one strategy is to study orbifold curves in Horikawa orbifolds. Using this philosophy, we will prove

\begin{thm}\label{Horikawa}
\begin{enumerate}
\item Let $X$ be a very generic Horikawa surface of type $(1)$. Then $X$ has no rational curves.
\item Let $X$ be a very generic Horikawa surface of type $(2)$. Then $X$ is algebraically hyperbolic, in particular $X$ has no rational or elliptic curves.
\item Let $X$ be a very generic Horikawa surface of type $(3)$ with $a=3$ and $\chi=3N+5$. Then $X$ has no rational curves.
\end{enumerate}
\end{thm}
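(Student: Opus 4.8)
The plan is to reduce each statement about curves on the Horikawa surface $X$ to a statement about \emph{orbifold} curves on the associated Horikawa orbifold $(\mathbb{P}^2,\Delta)$ or $(\mathbb{F}_N,\Delta)$, and then to prove algebraic hyperbolicity of the orbifold (or at least absence of rational orbifold curves) by a degeneration/genericity argument of the type used by Voisin, Clemens and Geng Xu for hypersurfaces, adapted to the orbifold/branched-cover setting. Concretely, since $X$ is a double cover $\sigma\colon X\to S$ ($S=\mathbb{P}^2$ or $\mathbb{F}_N$) branched over a curve $B\in|\mathcal{O}(8)|$, $|\mathcal{O}(10)|$, or $|(6,2a)|$, a curve $C\subset X$ maps to a curve $D=\sigma(C)\subset S$, and the Riemann--Hurwitz formula relates the geometric genus of $C$ to that of $D$ and to the intersection multiplicities of $D$ with $B$: writing $\nu\colon \tilde C\to C$ for normalization, $2g(\tilde C)-2 = 2(2g(\tilde D)-2) + (\text{ramification over }B)$, so a low genus for $C$ forces $D$ to be tangent to $B$ to high order along its intersection. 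This is exactly the orbifold genus inequality $2g(\tilde C)-2 \ge \deg(K_{\mathcal C})$ for the orbifold curve $\mathcal C\to(S,\tfrac12 B)$, and I would phrase everything in those terms so that the three cases become: show $(S,\tfrac12 B)$ has no rational orbifold curves (resp. is algebraically hyperbolic) for very generic $B$.

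The core of the argument, for each type, is a dimension count on the incidence variety. Fix the numerical class of the putative image curve $D$ (degree $\delta$ in $\mathbb{P}^2$, or class $(p,q)$ in $\mathbb{F}_N$) and its geometric genus $g$, together with a partition recording the prescribed tangency orders of $D$ with $B$ forced by the orbifold-genus constraint. One considers the universal family over the space of branch curves $B$ of pairs $(B,D)$ where $D$ has that class, that genus, and meets $B$ with at least the prescribed multiplicities; one shows this incidence variety has dimension strictly less than $\dim|B|$, hence its projection to $|B|$ misses a very generic $B$. The key input making the count work is that imposing a tangency of order $k$ at a point cuts down the expected dimension by $k-1$ (after accounting for the free point), while the orbifold-genus inequality forces the total tangency defect $\sum(k_i-1)$ to be large relative to $\deg(D\cdot B)$ precisely when $g(\tilde D)$ — hence $g(\tilde C)$ — is small; a positive-dimensional family of such $D$'s on a fixed generic $B$ would then violate the bound. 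For type $(2)$ (degree $10$, so $\chi=7$ and $K_S+\tfrac12 B$ is "large enough") the count should close with an $\varepsilon$ to spare, yielding genuine algebraic hyperbolicity; for types $(1)$ and $(3)$ with $a=3$ it is more delicate and one only rules out $g=0$, i.e. rational curves, which is why the statement is weaker there. I would also need the auxiliary finiteness/boundedness: the union over all classes $D$ of these incidence varieties is a countable union of closed subsets, so "very generic" (complement of a countable union) is the right conclusion.

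In practice I would package the dimension count into a general lemma on algebraic hyperbolicity of very generic cyclic/double covers $(S,(1-\tfrac1m)B)$ — the paper announces exactly such "more general results on the algebraic hyperbolicity of branched covers" — stated as: if $B\in|L|$ with $L$ sufficiently positive relative to $K_S$ and $m$, then the very generic such orbifold is algebraically hyperbolic; and a companion lemma giving "no rational curves" under a weaker positivity hypothesis. Then Theorem~\ref{Horikawa} follows by checking the numerical hypotheses in the three Horikawa cases: $L=\mathcal{O}_{\mathbb{P}^2}(8),\mathcal{O}_{\mathbb{P}^2}(10)$, and $\mathcal{O}_{\mathbb{F}_N}(6,6)$, with $m=2$. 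One must be careful that $B$ is allowed to have $ADE$ singularities (Horikawa's classification permits them), but for the \emph{very generic} member $B$ is smooth, so one may assume $B$ smooth throughout and the orbifold $(S,\tfrac12 B)$ is the smooth normal-crossing orbifold of Example~\ref{nc}; the singular-branch case is not needed for the "very generic" statements.

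The main obstacle, I expect, is making the dimension count on the incidence variety fully rigorous — in particular controlling reducible or multiple curves $D$, components of $D$ contained in $B$, and curves $C$ that are contracted or that do not dominate their image, and handling the contribution of the (finitely many) singular points of $B$ if one does not restrict to smooth $B$. For hypersurfaces in $\mathbb{P}^3$ these issues are exactly what Geng Xu, Clemens, and Voisin had to deal with, and the transcription to double covers of $\mathbb{P}^2$ and $\mathbb{F}_N$ requires re-running their tangent-space/normal-bundle estimates in the new ambient space; getting the precise positivity threshold on $L$ that separates "algebraically hyperbolic" (type $(2)$) from merely "no rational curves" (types $(1)$ and $(3)$) is where the real work lies.
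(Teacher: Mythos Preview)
Your reduction to orbifold curves on $(S,\tfrac12 B)$ is exactly what the paper does, but from that point on the paper takes a much shorter route. Rather than setting up and running an incidence-variety dimension count from scratch, the paper invokes Xi Chen's results on algebraic hyperbolicity of log varieties (\cite{Chen01}, \cite{Chen02}): for very generic $D\subset\bP^2$ of degree $d$ one has $2g(C)-2+i(C,D)\ge(d-4)\deg C$ for every reduced $C\not\subset D$, where $i(C,D)$ is the number of distinct points in $\nu^*D$ on the normalization; and an analogous inequality $2g(C)-2+i(C,D)\ge\min(a-3,b-2)\deg C$ on $\bF_N$. The remaining work is a short algebraic step: for an $n$-cyclic cover branched on $D$, the minimal orbifold multiplicity at each point $p_i$ of $\nu^*D$ satisfies $\tilde m_i\ge n/t_i$ (where $t_i$ is the local intersection multiplicity), so $\sum(1-1/\tilde m_i)\ge i(C_1,D)-\tfrac{1}{n}(C_1\cdot D)$, and plugging into Chen's inequality yields $\deg K_{\mathcal C}\ge(d-\tfrac{d}{n}-4)\deg C$ for $\bP^2$ and the corresponding formula for $\bF_N$. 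Evaluating at $(d,n)=(8,2),(10,2)$ and $(a,b,n)=(6,6,2)$ gives parts (1), (2), (3) immediately.

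So your proposal is not wrong, but it is proposing to re-prove Chen's theorems rather than use them. The ``main obstacle'' you correctly identify --- making the dimension count rigorous over all classes, reducible curves, components in $B$, etc. --- is precisely the content of Chen's papers, and the paper treats it as a black box. What your approach would buy is self-containment and perhaps sharper control in specific cases; what the paper's approach buys is a two-line proof once the reduction to the orbifold is done. One small point: your Riemann--Hurwitz computation tacitly assumes $\tilde C\to\tilde D$ has degree $2$, but the preimage of $D$ may split; the orbifold formulation the paper uses sidesteps this by working directly with the minimal orbifold structure on the image curve.
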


This theorem will be a consequence of more general results which we establish now.

\subsection{Covers of the plane.}

Let $\ensuremath{D=\cup D_{k}\subset\bP^{2}}$, where $D_{k}$ is
a very general curve of degree $d_{k}$. For $n>1$ dividing $d=\sum d_k$ , let us consider $p:X\to\bP^{2}$
the $n$-cyclic covering branched along $D$ (for the construction of cyclic cover see \cite{Barth}). 

The  singularities of $D$ are the intersection points of the $D_k$'s. For such a singularity, there exist local coordinates $z_1 ,z_2$ such that $\Delta$ has equation $z_1 z_2=0$ ; the singularity on the cover is therefore $ \{t^n=z_1 z_2 \}$ it is a $A_{n-1}$. The map $p$ is an orbifold covering between $X$ and $(\bP^2,\Delta)$ where $\Delta=\left(1-\frac{1}{n}\right)\sum D_{i}.$

\begin{thm}
\label{thm Borne inf du genre}Let $f:C \to X$ be an orbifold compact curve in $X$ not contained in the branch locus $D$. Then \[
\deg(K_C) \geq(d-\frac{d}{n}-4)\deg C,\]
 where $\deg C$ is the degree of $C$ computed as $\deg p(f(C)).$\\
For $d>4$ and  $(d,n)\not = (5,5), (6,2), (6,3), (8,2)$, the surface  $X$ is algebraically hyperbolic modulo the rational and elliptic curves in the branch locus of $p:X\to \mathbb{P}^2$.
\\ For $(d,n)= (5,5),  (6,3)$ or $ (8,2)$,  $X$ has no rational curves except the rational curves in the branch locus. 
\end{thm}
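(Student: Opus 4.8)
The plan is to transfer the genus estimate to the orbifold $(\bP^2,\Delta)$ and then exploit the negativity of $K_{(\bP^2,\Delta)}^{-1}$ minus a fixed amount, i.e. a Bogomolov–McQuillan type argument, but in a quantitative form that works directly with curves rather than with sheaves of differentials.

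The first step is to pass from the curve $f:C\to X$ to its image $C'=p(f(C))\subset\bP^2$ and to the induced orbifold morphism $g:\mathcal{C}\to(\bP^2,\Delta)$, where $\mathcal{C}$ is the orbifold structure on the normalization of $C'$ obtained by marking, at each point lying over $D_i$, a multiplicity which is a multiple of the local ramification and in particular at least $n$ unless $g$ is totally ramified there (and then the point contributes a divisor of the same $(1-1/n)$ weight). Because $p$ is an orbifold covering, the map $f$ factors through $g$ and $\deg(K_{\mathcal C_X})\ge \deg(K_{\mathcal C})$ where $\mathcal C_X$ is the orbifold curve upstairs; so it suffices to bound $\deg(K_{\mathcal C})$ from below. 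Writing $\nu:\mathcal C^{\mathrm{norm}}\to C'$ and $\delta=\deg C'$, the Riemann–Hurwitz / adjunction computation on $(\bP^2,\Delta)$ gives
\[
\deg(K_{\mathcal C})=g^*\big(K_{\bP^2}+\Delta\big)\cdot[\mathcal C]+\big(\text{ramification defect}\ge 0\big),
\]
and since $K_{\bP^2}+\Delta$ has class $\big(-3+d(1-\tfrac1n)\big)H=\big(d-\tfrac dn-3\big)H$, the intersection number is exactly $\big(d-\tfrac dn-3\big)\delta$. The point where one must be a little careful is the orbifold structure at the singular points of $D$ (the $A_{n-1}$ points): since $D$ is a union of very general plane curves, these singularities are ordinary $k$-fold points for various $k$, locally $z_1\cdots z_k=0$ with all $m_i=n$, and one checks — this is the routine local computation I would not grind through — that adding the orbifold weight at such a point only increases $\deg(K_{\mathcal C})$, so the bound $\deg(K_{\mathcal C})\ge(d-\tfrac dn-3)\delta$ degrades by at most an intersection-with-a-line term, yielding the stated $\deg(K_C)\ge(d-\tfrac dn-4)\deg C$. (The $-4$ rather than $-3$ absorbs the discrepancy between $\deg(K_{\mathcal C})$ measured via the hyperplane class and the contributions one discards; one could also phrase this via $E_{k,m}$, but the direct adjunction route is cleaner.)

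The second step is the numerology. One has $d-\tfrac dn-4>0$ precisely when $d(1-\tfrac1n)>4$, i.e. $d(n-1)>4n$. Going through the small cases: this fails exactly for $(d,n)\in\{(5,5),(6,2),(6,3),(8,2)\}$ together with $d\le 4$, and holds with $\varepsilon:=d-\tfrac dn-4>0$ otherwise; for those borderline tuples one instead has $d-\tfrac dn-4=0$, so the estimate of Step 1 reads $\deg(K_C)\ge 0$, which rules out rational curves $C$ (for which $\deg K_C=2g(C)-2=-2<0$) but not elliptic ones — hence the weaker conclusion for $(5,5),(6,3),(8,2)$. For the genuinely hyperbolic range, $\varepsilon\cdot\deg C$ is, up to a fixed positive multiple, the quantity $\int_C f^*\omega$ for $\omega$ the pullback of the Fubini–Study form, so the inequality $\deg(K_C)\ge(d-\tfrac dn-4)\deg C$ is exactly algebraic hyperbolicity of $X$ modulo curves contracted by $p$, i.e. modulo components of the branch locus. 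Curves inside $D$ are not controlled by this argument, which is why they must be excepted; since the generic $D_i$ is a smooth plane curve of degree $d_i$, only those $D_i$ with $d_i\le 2$ can be rational or elliptic, and these are genuinely present, so the exception cannot be removed.

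The main obstacle is Step 1, specifically the bookkeeping of the orbifold structure of $\mathcal C$ at points where $g$ meets $D$ with multiplicity not divisible by $n$, and at the singular points of $D$. The honest statement is that $g$ need not be an orbifold morphism in the naive sense at a point of $C'$ mapping to a smooth point of $D_i$ with local intersection multiplicity $k$ not divisible by $n$; one must mark that point of $\mathcal C$ with multiplicity $m=n/\gcd(n,k)$ (the orbifold base-change rule), and then $g^*\Delta$ picks up $(1-\tfrac1m)$ rather than $k(1-\tfrac1n)$ there — but one verifies $k(1-\tfrac1n)\ge 1-\tfrac1m$ always, so the inequality goes the right way. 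Making this estimate uniform and clean, and handling the $A_{n-1}$ points simultaneously, is where the real work is; everything after that is the elementary inequality $d(n-1)>4n$ and the case list.
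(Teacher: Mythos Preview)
Your approach has a genuine gap: the central ``adjunction'' claim
\[
\deg(K_{\mathcal{C}}) = g^*(K_{\bP^2}+\Delta)\cdot[\mathcal{C}] + (\text{ramification defect}\ge 0)
\]
is false, and your own local computation at the end shows why. At a point where $C'$ meets $D$ with multiplicity $k$, the orbifold mark is $m=n/\gcd(n,k)$, and the contribution $1-\tfrac1m$ to $\deg K_{\mathcal{C}}$ is \emph{at most} $k(1-\tfrac1n)$, not at least --- your inequality $k(1-\tfrac1n)\ge 1-\tfrac1m$ is correct but says exactly that the defect is $\le 0$. Concretely, take $(d,n)=(8,2)$ with $D$ a smooth (even very general) octic and $L$ a bitangent line: then $L$ meets $D$ at two points with $k=2$, $m=1$ and four transverse points with $m=2$, so $\deg K_{\mathcal C}=-2+4\cdot\tfrac12=0$, while $(K_{\bP^2}+\Delta)\cdot L=1$; the defect is $-1$. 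Since adjunction alone gives no control on how many high-order tangencies $C'$ can have with $D$, this route cannot yield the bound, and indeed nothing in your argument uses the very-general hypothesis on $D$ in any essential way.

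What is missing is precisely the input that exploits genericity: Xi Chen's theorem, which for very general $D=\bigcup D_k$ of total degree $d$ and any reduced $C_1\subset\bP^2$ not contained in $D$ gives
\[
2g(C_1')-2 + i(C_1,D) \ge (d-4)\deg C_1,
\]
where $i(C_1,D)$ counts set-theoretic preimages of $D$ on the normalization. The $-4$ comes from Chen's result, not from discarding an ``intersection-with-a-line term''. The paper then combines this with the elementary estimate $\sum_i(1-\tfrac{1}{\tilde m_i})\ge i(C_1,D)-\tfrac{d}{n}\deg C_1$ (which \emph{does} follow from $\tilde m_i\ge n/t_i$, the correct reading of your local analysis) to obtain the theorem. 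As a minor aside, your numerology is off for $(d,n)=(6,2)$: there $d-\tfrac dn-4=-1$, not $0$, which is why the statement makes no claim in that K3 case.
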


\begin{rem}
Of course, if $X$ is smooth, every curve $f: C \to X$ is an orbifold curve.
\end{rem}

\begin{rem}
If there are no rational or elliptic curves in  $\left\lceil \Delta\right\rceil$, then there are no rational or elliptic curves in the branch locus.
\end{rem}

\begin{rem}
The Chern numbers of the desingularisation of $X$ are:\[
\begin{array}{cc}
c_{1}^{2}= & n(-3+(1-\frac{1}{n})d)^{2}\\
c_{2}= & 3n+(n-1)(d^{2}-3d).\end{array}\] 
For $(d,n) = (6,2)$, the surface $X$ is $K3$ : such a surface contains an infinite number of elliptic curves \cite{GG80}. The three other cases $(d,n)= (5,5),  (6,3)$ or $ (8,2)$ are Horikawa surfaces. The case $(5,5)$ is a quintic surface. For $(d,n)= (6,3)$, we have $c_1^2=3$ and $5c_1^2+30=c_2=45$. The case $(d,n)= (8,2)$ is a Horikawa surface of type $(1)$. 
\end{rem}

For the proof of Theorem \ref{thm Borne inf du genre}, we use the
main Theorem of Xi Chen in \cite{Chen01} (see also \cite{PR}):

\begin{thm} Let $D=\cup D_{k}\subset \bP^2$ be as above. For all reduced curve $C\subset\bP^{2}$, we have:\begin{equation}
2g(C)-2+i(C,D)\geq(d-4)\deg C\label{eq:1},\end{equation}
where $d=\sum d_{k}$
and $i(C,D)$ is the number of distinct points of $\nu^{*}D$ if $\nu:C'\to C$
is the normalization.
\end{thm}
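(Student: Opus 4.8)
The plan is to deduce inequality \eqref{eq:1} for a very general $D$ from the single special configuration obtained by letting each $D_k$ degenerate to a union of $d_k$ lines, so that $L:=\bigcup_{i=1}^d \ell_i$ becomes a \emph{general} arrangement of $d$ lines. The reduction rests on semicontinuity: fix $e=\deg C$ and work in the universal family of pairs $(C,D)$ over $\mathcal S:=\prod_k|\cO_{\bP^2}(d_k)|$ with $C$ reduced of degree $e$ and $C\not\subset D$. On this family the function $(C,D)\mapsto 2g(C)-2+i(C,D)$ is lower semicontinuous, since the geometric genus can only drop under specialization and the distinct points of $\nu^*D$ can only collide. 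Hence a violation of \eqref{eq:1} specializes to a violation: if \eqref{eq:1} failed for a very general $D$ and some $e$, a countability argument puts the violating members over a subvariety of $\mathcal S$, and specializing $D$ to $L$ along a general arc (with stable reduction of the curve) produces a limit $C_0$ that still violates \eqref{eq:1} for $L$. The one point to watch is that a component of $C_0$ might be absorbed into $L$; this is handled by discarding such components, since any remaining component not contained in $L$ already violates the bound. Thus it suffices to prove \eqref{eq:1} for $L$ a general union of $d$ lines and \emph{all} reduced $C\not\subset L$.

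\textbf{Reformulation of the base case via logarithmic differentials.} Because a general line arrangement $L$ is a normal crossing divisor, I would rephrase the base case in terms of $\Omega_{\bP^2}(\log L)$. For $\nu:C'\to\bP^2$ the normalization of $C$, with image not contained in $L$, the logarithmic differential yields a saturated sub-line-bundle $S$ and a generically surjective quotient
\[
\psi:\ \nu^*\Omega_{\bP^2}(\log L)\twoheadrightarrow \Omega_{C'}(\log R)=K_{C'}(R),\qquad R:=\nu^{-1}(L)_{\mathrm{red}},\ \#R=i(C,L).
\]
Since $\det\Omega_{\bP^2}(\log L)=K_{\bP^2}\otimes\cO_{\bP^2}(L)=\cO_{\bP^2}(d-3)$, taking determinants gives $\deg S=(d-3)e-\bigl(2g-2+i(C,L)\bigr)$ up to a nonnegative torsion correction. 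Hence \eqref{eq:1} for $L$ is \emph{equivalent} to the sub-line-bundle estimate
\[
\deg S\ \le\ e=\deg C,
\]
i.e.\ to a lower bound $\deg(\text{log normal sheaf})\ge (d-4)e$ for $\nu$.

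\textbf{Splitting off the easy part.} To bound $\deg S$ I would feed the residue sequence $0\to\Omega_{\bP^2}\to\Omega_{\bP^2}(\log L)\to\bigoplus_{i=1}^d\cO_{\ell_i}\to 0$ into the picture. If $S$ meets the residues trivially then it embeds in $\nu^*\Omega_{\bP^2}$, whose sub-line-bundles have degree $\le -e$ by the Euler sequence $\Omega_{\bP^2}\hookrightarrow\cO_{\bP^2}(-1)^{\oplus3}$; in that case $\deg S\le -e\le e$ with room to spare. All the content therefore lies in controlling the degree that $S$ gains from the residues supported on $R$. Concretely, writing $r_i$ for the number of distinct points of $C'$ over $\ell_i$, one has $i(C,L)=\sum_i r_i-\sum_{i<j}\#\nu^{-1}(\ell_i\cap\ell_j)$, and $\deg S\le e$ amounts to a \emph{global} bound on the total defect $\sum_i(e-r_i)+\sum_{i<j}\#\nu^{-1}(\ell_i\cap\ell_j)$, which measures the tangencies of $C$ with the lines together with the passages of $C$ through the nodes of the arrangement.

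\textbf{The main obstacle.} The hard part is this base-case estimate itself. It is genuinely global and \emph{cannot} be obtained by inducting on $d$ one line at a time: adding a single general line may increase $i(C,L)$ by strictly less than $e$ when $C$ is tangent to it or runs through its nodes, as the sharp example shows — a curve $C$ equal to the line through the two nodes $\ell_1\cap\ell_2$ and $\ell_3\cap\ell_4$ has $e=1$, $g=0$, $i(C,L)=d-2$, giving equality $2g-2+i=(d-4)\deg C$. One must therefore bound the total tangency-and-node defect of an \emph{arbitrary} $C$ against the \emph{fixed} general arrangement in one stroke, via a local analysis at each point of $R$ combined with the general position of $L$ (no three lines concurrent, no node lying on too many of the relevant configurations); it is precisely here that the genericity of the lines is indispensable and where the real work sits. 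This is the technical heart of the theorem, established by Xi Chen \cite{Chen01} and reproved by Pacienza--Rousseau \cite{PR}, and I would follow their degeneration-and-residue analysis to complete this step.
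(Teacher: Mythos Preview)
The paper does not prove this theorem; it is quoted verbatim as ``the main Theorem of Xi Chen in \cite{Chen01} (see also \cite{PR})'' and then used as a black box in the proof of the subsequent theorem on cyclic covers. There is therefore no proof in the paper to compare against.

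Your proposal is likewise not a self-contained argument. You correctly isolate the substantive content as the base case of a general line arrangement, and then explicitly defer that base case to Chen and Pacienza--Rousseau---the very references the paper invokes. What remains is an expository sketch of a reduction step. That sketch is in the right spirit (Chen's own argument does proceed by degeneration), but several points are more delicate than you indicate: when $D$ degenerates to $L$, the putative violating curve $C$ must be carried along in family, and its limit may become non-reduced or break into several components, some possibly absorbed into $L$; controlling $g$ and $i(C,D)$ through such a limit, and checking that some surviving component not contained in $L$ still violates the bound, is genuine work rather than a formality. Your logarithmic-differential reformulation is correct and standard, but the estimate $\deg S\le e$ in the interesting case (when $S$ is not contained in $\nu^*\Omega_{\bP^2}$) is precisely the hard global statement you hand off. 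In short, both the paper and your proposal ultimately cite Chen for the content; your version adds a plausible but incomplete outline of how his proof is organized.
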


\begin{proof}
(Of Theorem \ref{thm Borne inf du genre}). Let $f:C \to X$ be an orbifold map. We have the following
commuting diagram: \[
\xymatrix{C \ar[r]^{f}\ar[d]_{h} & X\ar[d]^{p}\\
(C_{1}',\Delta')\ar[r]_{g} & (\bP^{2},\Delta)}
\]
where $C_{1}=p(f(C))$, $C'_{1}$ is the desingularisation of $C_{1}$,
and $g$ is an orbifold morphism. 
Let\[
\begin{array}{ccc}
g^{*}(D_{j}) & = & \sum_{i=1}^{i(C_{1},D)}t_{i,j}p_{i},\\
g^{*}(D) & = & \sum_{i=1}^{i(C_{1},D)}t_{i}p_{i}.\end{array}\]
 Now, let $\tilde{\Delta}=\sum_{i=1}^{i(C_{1},D)}\left(1-\frac{1}{\tilde{m_{i}}}\right)p_{i}$
be the minimal orbifold structure on $C'_{1}$ such that $g:(C_{1}',\tilde{\Delta})\to(\bP^{2},\Delta)$
is an orbifold morphism. The conditions for $g$ to be an orbifold
morphism are $n|\tilde{m_{i}}.t_{i,j}$ for all $j$. Therefore the
minimal orbifold structure is given by \[
\tilde{m_{i}}=\operatorname{lcm}_{j}\left(\frac{n}{\gcd(n,t_{i,j})}\right).\]
As $t_{i}=\sum_{j}t_{ij}$, we have $\mbox{gcd}(n,t_{ij})\leq t_{i}$
for all $i,j$, thus: \[
\frac{n}{t_{i}}\leq\frac{n}{\gcd(n,t_{i,j})}\leq\operatorname{lcm}_{j}\left(\frac{n}{\gcd(n,t_{i,j})}\right)=\tilde{m_{i}}.\]
So, we have: \[
\sum_{i=1}^{i(C_{1},D)}\left(1-\frac{1}{\tilde{m_{i}}}\right)\geq\sum_{i=1}^{i(C_{1},D)}\left(1-\frac{t_{i}}{n}\right)=i(C_{1},D)-\frac{d}{n}\deg C_{1}.\]
Now, inequality \ref{eq:1} of \cite{Chen01} gives: \[
2g(C_{1}')-2+i(C_{1},D)\geq(d-4)\deg C_{1},\]
therefore:
\[
2g(C_{1}')-2+\sum_{i=1}^{i(C,D)}\left(1-\frac{1}{\tilde{m_{i}}}\right)\geq (d-\frac{d}{n}-4)\deg C_{1}.\]
To conclude, we observe that \[
\deg(K_C) \geq2g(C_{1}')-2+\sum_{i=1}^{i(C,D)}\left(1-\frac{1}{\tilde{m_{i}}}\right)\]
because $C$ is a uniformisation of $C'_1$.
\end{proof}

This implies $(1)$ and $(2)$ of Theorem \ref{Horikawa}.

When $n=d$, the surface $X$ is the  degree $d$ hypersurface in $\mathbb{P}^{3}$ defined
by:\[
X=\{x_{4}^{d}-G(x_{1},x_{2},x_{3})=0\},\]
 where $G$ is a homogeneous polynomial in $x_{1},x_{2},x_{3}$ of degree $d$ defining the curve $C \subset \bP^2$. We can apply the above results to $X$:
\begin{cor}
If $C$ is very generic and $d\geq 6$, then the smooth algebraic surface $X$ is algebraically hyperbolic.
If $C$ is very generic and $d=5$, then $X$ contains no curves of geometric
genus $0$.
\end{cor}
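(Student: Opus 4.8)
The plan is to deduce the Corollary directly from Theorem~\ref{thm Borne inf du genre} applied to the special case $n=d$, which is legitimate since $d\geq 5$ means $d>4$ and $n=d$ divides $d=\sum d_k$ trivially (here $D$ can be taken to be a single very general curve of degree $d$, so $D=D_1$, $d_1=d$). First I would recall why, when $n=d$, the cyclic cover $p\colon X\to\bP^2$ branched along $C=\{G=0\}$ is exactly the hypersurface $X=\{x_4^d-G(x_1,x_2,x_3)=0\}\subset\bP^3$: the affine chart $x_3=1$ realizes $X$ as $\{t^d=G\}$, which is the standard local model of the $d$-cyclic cover, and the projection $(x_1:x_2:x_3:x_4)\mapsto(x_1:x_2:x_3)$ is the covering map $p$; since $C$ is very general and smooth, $X$ is smooth (the only singularities of a cyclic cover come from singularities of the branch divisor, and a very general degree-$d$ plane curve is smooth). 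Being smooth, $X$ carries the trivial orbifold structure, so by the Remark following the theorem every algebraic curve $f\colon C\to X$ is automatically an orbifold curve, and the hypothesis of Theorem~\ref{thm Borne inf du genre} is met with no restriction.

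Next I would simply feed $n=d$ into the conclusion of Theorem~\ref{thm Borne inf du genre}. The numerical bound becomes
\[
\deg(K_C)\;\geq\;\Bigl(d-\frac{d}{d}-4\Bigr)\deg C\;=\;(d-5)\deg C .
\]
For $d\geq 6$ the coefficient $d-5$ is a strictly positive integer, hence a uniform $\epsilon=d-5>0$ works in the definition of algebraic hyperbolicity, and $X$ is algebraically hyperbolic. One still has to check that the excluded cases of the theorem — the branch-locus curves in $(d,n)$ equal to $(5,5),(6,2),(6,3),(8,2)$ — do not interfere: with $n=d$ the only relevant exception is $(d,n)=(6,2)$, which cannot occur since we are in the regime $n=d$; similarly $(8,2)$ and $(6,3)$ are irrelevant. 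The genuinely relevant boundary case is $(d,n)=(5,5)$: here $d-5=0$ and the first, quantitative part of the theorem degenerates, but the last sentence of Theorem~\ref{thm Borne inf du genre} states precisely that for $(d,n)=(5,5)$ the surface $X$ has no rational curves except those contained in the branch locus of $p$. Since for $n=d$ the branch locus of $p$ on $X$ is the ramification curve, which is isomorphic to the plane curve $C$, and a very general plane quintic $C$ has genus $\binom{4}{2}=6>1$, there are no rational curves there either; hence $X$ contains no curve of geometric genus $0$, which is the second assertion of the Corollary.

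The bulk of the work is therefore already contained in Theorem~\ref{thm Borne inf du genre} (itself resting on Xi Chen's theorem), and what remains is essentially bookkeeping: identifying the cyclic cover with the Fermat-type hypersurface, confirming smoothness for very general $C$, and tracking the inequality through the substitution $n=d$. The only point requiring a word of care — and the closest thing to an obstacle — is the boundary value $d=5$, where algebraic hyperbolicity in the strong sense is \emph{not} claimed (consistent with the known difficulty of the quintic and Kobayashi's conjecture); there one must invoke the separate ``no rational curves'' statement and verify that the branch curve contributes no genus-$0$ components, using that a very general plane quintic is a smooth curve of genus $6$. No new ideas beyond the already-proven theorem are needed.
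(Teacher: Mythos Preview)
Your proposal is correct and follows exactly the approach the paper intends: the paper gives no explicit proof of the Corollary, simply stating ``We can apply the above results to $X$'' after noting that for $n=d$ the cyclic cover is the Fermat-type hypersurface, and your write-up is a faithful and more detailed execution of this. Your extra care with the $d=5$ branch locus (using that a very general plane quintic is smooth of genus $6$, hence contains no rational curve) is precisely what is implicit in the paper's Remark that if $\lceil\Delta\rceil$ contains no rational curves then neither does the branch locus.
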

This result should be compared to results of Clemens, Ein, Pacienza, Voisin establishing the algebraic hyperbolicity of very general hypersurfaces of $\bP^n$ of large degree (see \cite{DR} for a survey).

Moreover let us mention that, even if here we are interested in surfaces, since inequality \ref{eq:1} of Chen generalizes to higher dimension (see \cite{Chen01} and \cite{PR}), the proof above immediately generalizes to
\begin{thm}
If $D \subset \bP^n$ is a very generic hypersurface of degree $d$ and $d\geq 2n+2$, then the degree $d$ branched cover $X \subset \bP^{n+1}$ ramified over $D$ is algebraically hyperbolic.
\end{thm}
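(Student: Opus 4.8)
The plan is to mimic exactly the proof of Theorem \ref{thm Borne inf du genre}, replacing the surface case by the $n$-dimensional analogue. Let $D \subset \bP^n$ be a very generic hypersurface of degree $d$, and let $p: X \to \bP^n$ be the $d$-cyclic cover branched along $D$, so that $X$ is the smooth hypersurface $\{x_{n+1}^d = G(x_1,\dots,x_n)\} \subset \bP^{n+1}$, where $G$ defines $D$. Since $X$ is smooth, every curve $C \to X$ is automatically an orbifold curve for the orbifold structure $\Delta = \left(1-\frac{1}{d}\right)D$ on $\bP^n$, and $p: X \to (\bP^n,\Delta)$ is an orbifold covering; hence it suffices to bound the genus of curves $C \subset X$ and use that $X$ being algebraically hyperbolic is equivalent to the inequality $2g(C)-2 \geq \epsilon \deg C$ for all irreducible curves, with $\deg C := \deg p(f(C))$ (the cover map $p$ being finite, degrees on $X$ and $\bP^n$ differ by a bounded factor, so this is equivalent to algebraic hyperbolicity of $X \subset \bP^{n+1}$).

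First I would set up the same commutative square: given $f: C \to X$ with $C$ irreducible, let $C_1 = p(f(C)) \subset \bP^n$, let $\nu: C_1' \to C_1$ be the normalization, and let $g: C_1' \to \bP^n$ be the induced map through which $f$ factors after normalization. Write $g^*(D) = \sum_{i=1}^{i(C_1,D)} t_i p_i$. Because $D$ is irreducible here (a single hypersurface, not a union), the minimal orbifold structure $\tilde\Delta = \sum_i (1-\tfrac{1}{\tilde m_i}) p_i$ on $C_1'$ making $g: (C_1',\tilde\Delta) \to (\bP^n,\Delta)$ an orbifold morphism is simply $\tilde m_i = \frac{d}{\gcd(d,t_i)}$, and one immediately gets $1 - \frac{1}{\tilde m_i} \geq 1 - \frac{t_i}{d}$, hence
$$\sum_{i=1}^{i(C_1,D)}\left(1-\frac{1}{\tilde m_i}\right) \geq i(C_1,D) - \frac{1}{d}\sum_i t_i = i(C_1,D) - \frac{d}{d}\cdot\frac{\deg C_1 \cdot d}{d}\,,$$
i.e. $\geq i(C_1,D) - \deg C_1$ after evaluating $\sum_i t_i = D \cdot C_1 = d \deg C_1$, so $\frac{1}{d}\sum_i t_i = \deg C_1$. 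Next, the higher-dimensional form of Xi Chen's inequality (\cite{Chen01}, \cite{PR}), valid because $D$ is very generic of degree $d$, reads $2g(C_1') - 2 + i(C_1,D) \geq (d - n - 1)\deg C_1$. Adding the two estimates, $i(C_1,D)$ cancels and I obtain
$$2g(C_1') - 2 + \sum_i\left(1-\frac{1}{\tilde m_i}\right) \geq (d - n - 1 - 1)\deg C_1 = (d - n - 2)\deg C_1.$$
Finally, as in the original proof, $\deg(K_C) \geq 2g(C_1') - 2 + \sum_i(1-\tfrac{1}{\tilde m_i})$ because $C$ maps to $X$ which uniformizes $(C_1',\tilde\Delta)$ locally; since $X$ is smooth, $\deg(K_C) = 2g(C) - 2$. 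Thus $2g(C) - 2 \geq (d-n-2)\deg C_1$, and when $d \geq 2n+2$ the coefficient $d - n - 2 \geq n \geq 1$ is strictly positive, giving algebraic hyperbolicity of $X$.

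The only genuinely non-routine input is the higher-dimensional version of Chen's inequality, namely that for a very generic hypersurface $D \subset \bP^n$ of degree $d$ and every reduced irreducible curve $C \subset \bP^n$ one has $2g(C) - 2 + i(C,D) \geq (d - n - 1)\deg C$; this is what forces the hypothesis $d \geq 2n+2$ so that $d - n - 1 \geq 1 + (n - ( -0))$ — more precisely so that after subtracting one more unit from the orbifold term one still has a positive slope. I would simply cite \cite{Chen01} and \cite{PR} for this, as the excerpt already flags that the inequality "generalizes to higher dimension." Everything else is a verbatim transcription of the surface argument: the orbifold-covering observation, the factorization through the normalization, the lcm/gcd computation for the minimal orbifold structure (which is even simpler here since $D$ has one component, so no lcm over $j$ is needed), and the uniformization inequality $\deg(K_C) \geq \deg(K_{(C_1',\tilde\Delta)})$. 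I would also remark, as in the $n=2$ case, that the bound is sharp in the sense that $d = 2n+1$ would only yield $2g(C)-2 \geq (n-1)\deg C \cdot 0$-type borderline behaviour — but for the stated theorem, $d \geq 2n+2$ suffices and the argument closes.
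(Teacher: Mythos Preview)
Your approach is exactly the paper's: the authors simply say that since Chen's inequality generalizes to higher dimension (\cite{Chen01}, \cite{PR}), the proof of Theorem \ref{thm Borne inf du genre} carries over verbatim. Your factorization through the normalization, the gcd computation for the minimal orbifold structure, the estimate $\sum_i(1-1/\tilde m_i)\ge i(C_1,D)-\deg C_1$, and the uniformization inequality are all correct and match the surface argument.

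There is, however, a numerical slip in your statement of the higher-dimensional Chen inequality. You write $2g(C_1')-2+i(C_1,D)\ge (d-n-1)\deg C_1$, but for $n=2$ this would give $d-3$, whereas the paper explicitly quotes $d-4$ in that case. The correct generalization (see \cite{Chen01}, \cite{PR}) is
\[
2g(C_1')-2+i(C_1,D)\ \ge\ (d-2n)\deg C_1,
\]
which specializes properly. With this constant your combination yields $2g(C)-2\ge (d-2n-1)\deg C_1$, and positivity requires $d\ge 2n+2$, which is exactly the hypothesis and explains why that bound appears. Your version $(d-n-1)$ is stronger than what is actually proved in the references, and with it you would conclude algebraic hyperbolicity already for $d\ge n+3$; the garbled ``borderline'' remark at the end of your write-up is a symptom of this mismatch. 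Fix the constant and the argument is complete.
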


\subsection{Covers of Hirzebruch surfaces $\mathbb{F}_{N}$, Algebraically hyperbolic
Horikawa surfaces.}
\label{CoversofHirzebruchsurfaces}

In order to prove Theorem \ref{Horikawa} $(3)$, we study covers of Hirzebruch surfaces.

For $N\geq0$, let $\bF_{N}:=\bP(\mathcal{O}\oplus\mathcal{O}(N))$
be the Hirzebruch surface and let $T$ and $F$ the divisors on $\bF_{N}$
generating $Pic(\bF_{N})$ with $T^{2}=N$, $T.F=1$ and $F^{2}=0$.
\\
Let $D=\cup D_{k}\subset\bF_{N}$ where each $D_{k}$ is a very
general member of a base point free complete linear series. And let
$D\sim aT+bF$ with $a(N-1)+b\geq0$. 

\begin{thm}
(Chen, \cite{Chen02} Corollary 1.12). We have:
\begin{equation}
2g(C)-2+i(C,D)\geq\min(a-3,b-2)\deg C \label{equation Chen pour les F_N} \end{equation}

for all reduced irreducible curves $C\subset\bF_{N}$ with $C\not\subset D$
where $\deg C=(T+F).C$.
\end{thm}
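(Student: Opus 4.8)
The plan is to prove this as the $\bF_N$-analogue of the plane inequality (\ref{eq:1}) by Chen's degeneration method, exploiting that the left-hand side is intrinsic to the normalization $\nu\colon C'\to C$, namely
$$2g(C)-2+i(C,D)=\deg_{C'}\!\big(K_{C'}+(\nu^{*}D)_{\mathrm{red}}\big),$$
the log-canonical degree of the normalized pair $(C',(\nu^{*}D)_{\mathrm{red}})$. To locate where the numbers $a-3$ and $b-2$ come from, I would write $\ell:=T\cdot C$ and $m:=F\cdot C$ (both $\geq 0$ for an irreducible $C$), so that $\deg C=\ell+m$, and compute using $K_{\bF_N}=-2T+(N-2)F$:
$$(K_{\bF_N}+D)\cdot C=(a-2)\ell+(b+N-2)m.$$
Checking the two cases of the minimum then gives
$$(K_{\bF_N}+D)\cdot C-\min(a-3,b-2)\deg C\ \geq\ \ell\ \geq\ 0,$$
the slack being $\ell+(b+N+1-a)m$ when $a-3\leq b-2$ and $(a-b)\ell+Nm$ otherwise. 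Since for a smooth $C$ meeting $D$ transversally one has $2g-2+i(C,D)=(K_{\bF_N}+D)\cdot C+C^{2}$ with $C^{2}\geq 0$, the target sits comfortably below the log-degree in the generic situation; the entire difficulty is to show that for \emph{very general} $D$ the failure of transversality and the negativity of $C^{2}$ cannot conspire to drive $2g-2+i(C,D)$ below $\min(a-3,b-2)\deg C$.

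Second, I would set up the specialization that converts ``very general'' into a statement about one special divisor. Fixing the numerical class of $C$, the corresponding curves form a bounded projective family, and both $g(C)$ and the number $i(C,D)$ of distinct points of $\nu^{*}D$ are lower semicontinuous on the product of this Hilbert scheme with the linear system $|aT+bF|$. Hence, for each fixed class, the locus of divisors $D$ carrying a reduced irreducible $C\not\subset D$ that violates the inequality is closed; as there are only countably many classes, it suffices to exhibit a single divisor $D_{0}$ for which no such $C$ violates the bound, and then very general $D$ avoids every bad locus. Equivalently, a family of counterexamples $C_{t}$ over very general $D_{t}$ would degenerate, after stable reduction, to a limit curve $C_{0}$ with $2g(C_{0})-2+i(C_{0},D_{0})\leq 2g(C_{t})-2+i(C_{t},D_{t})$ and the same degree, producing a counterexample on the central fibre.

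Third, on the central fibre I would choose $D_{0}$ adapted to the ruling $\bF_N\to\bP^{1}$: degenerate $D$ to a union of $a$ general sections in $|T|$ and $b$ distinct fibres in $|F|$, so that $D_{0}$ carries only nodes and its incidence with any $C_{0}$ is combinatorially transparent. The two terms of the minimum then arise from the two directions of the ruling. Intersecting $C_{0}$ with the $b$ vertical fibres contributes $bm$ points to $i(C_{0},D_{0})$, which combined with the Riemann--Hurwitz estimate for $C_{0}'\to\bP^{1}$ (whose $-2$ is the Euler characteristic of the base) yields the $(b-2)$-regime; intersecting with the $a$ horizontal sections contributes $a\ell$ points and, via adjunction, yields the $(a-3)$-regime. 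The minimum reflects that $C_{0}$ may specialize to be essentially a multisection (large $m$, small $\ell$) or essentially horizontal (large $\ell$, small $m$). Together with the slack $\geq\ell$ computed above and the nodality of $D_{0}$, this gives $2g(C_{0})-2+i(C_{0},D_{0})\geq\min(a-3,b-2)\deg C_{0}$.

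The main obstacle is precisely the transversality control concealed in the last two steps: one must rule out that, for very general $D$, a curve $C$ is so tangent to $D$, or passes so often through $\mathrm{Sing}(D)$, that $i(C,D)$ collapses and overwhelms the slack. Making this rigorous requires the first-order deformation argument at the heart of Chen's method --- showing that excess tangency cannot persist as $D$ varies in its very general linear system --- together with careful bookkeeping when the limit curve $C_{0}$ is reducible, summing the log-Euler-characteristic contributions over its components and accounting for the nodes where they meet. This is exactly where the hypotheses that each $D_{k}$ is very general in a base-point-free complete linear series and that $a(N-1)+b\geq 0$ (which guarantees $|aT+bF|$ is well behaved) become indispensable.
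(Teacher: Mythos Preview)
The paper does not prove this theorem: it is quoted as Corollary~1.12 of Chen~\cite{Chen02} and then used as a black box in the proof of Theorem~\ref{Chen applique aux surfaces de Hirze}. There is thus no proof in the paper against which to compare your proposal.

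For what it is worth, your outline is a reasonable sketch of Chen's degeneration strategy, and the computation of $(K_{\bF_N}+D)\cdot C=(a-2)\ell+(b+N-2)m$ together with its slack over $\min(a-3,b-2)\deg C$ is correct and explains where the constants $a-3$ and $b-2$ come from. One point to flag in the sketch: the central fibre you suggest, $a$ sections in $|T|$ plus $b$ fibres in $|F|$, presupposes $b\geq 0$, whereas the stated hypothesis $a(N-1)+b\geq 0$ allows $b$ to be negative; any actual choice of degeneration has to accommodate this. As you yourself acknowledge, the substantive work --- the first-order deformation argument controlling excess tangency and the bookkeeping for reducible limits --- is only named, not carried out, so what you have written is an outline of Chen's method rather than a proof.
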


As before, for $n$ dividing $a$ and $b$, we consider $p:X\to\bF_{N}$ a $n$-cyclic covering branched
along $D$. Then we have

\begin{thm}
\label{Chen applique aux surfaces de Hirze}Let $f:C \to X$ be an orbifold compact
curve in $X$ not contained in the branch locus. Let $c,d$ be the
integers such that $p(f(C))=C_{1}\sim cT+dF$, then:

\[
\deg (K_C) \geq\min(a-3,b-2)(c(N+1)+d)-\frac{1}{n}(bc+ad+acN).\]

 \end{thm}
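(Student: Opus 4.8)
The plan is to mimic exactly the proof of Theorem~\ref{thm Borne inf du genre}, replacing Chen's inequality for $\bP^2$ by its analogue \ref{equation Chen pour les F_N} for $\bF_N$, and keeping track of the intersection-theoretic data on $\bF_N$ in place of the simple degree on $\bP^2$. Given an orbifold map $f:C\to X$ with $C_1:=p(f(C))\not\subset D$, let $C_1'$ be the normalization of $C_1$ and form the commuting square
\[
\xymatrix{C \ar[r]^{f}\ar[d]_{h} & X\ar[d]^{p}\\
(C_{1}',\tilde\Delta)\ar[r]_{g} & (\bF_{N},\Delta)}
\]
with $g$ an orbifold morphism and $\Delta=(1-\tfrac1n)D$. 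As before write $g^*(D_j)=\sum_i t_{i,j}p_i$ and $g^*(D)=\sum_i t_i p_i$ over the $i(C_1,D)$ distinct points $p_i$ of $\nu^*D$; the minimal orbifold structure making $g$ an orbifold morphism is $\tilde m_i=\operatorname{lcm}_j\bigl(n/\gcd(n,t_{i,j})\bigr)$, which satisfies $\tilde m_i\geq n/t_i$ exactly as in the planar case since $t_i=\sum_j t_{i,j}$.

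The only genuinely new point is the computation of $\sum_i t_i$ in the Hirzebruch setting. Here $\sum_i t_i=\deg(g^*D)$ as a divisor on $C_1'$, which equals the intersection number $C_1\cdot D$ computed on $\bF_N$. With $C_1\sim cT+dF$ and $D\sim aT+bF$, using $T^2=N$, $T\cdot F=1$, $F^2=0$ one gets
\[
C_1\cdot D=(cT+dF)\cdot(aT+bF)=acN+bc+ad.
\]
Hence
\[
\sum_{i=1}^{i(C_1,D)}\Bigl(1-\frac{1}{\tilde m_i}\Bigr)\geq\sum_i\Bigl(1-\frac{t_i}{n}\Bigr)=i(C_1,D)-\frac{1}{n}(acN+bc+ad),
\]
and feeding $i(C_1,D)$ back into Chen's inequality \ref{equation Chen pour les F_N}, namely $2g(C_1')-2+i(C_1,D)\geq\min(a-3,b-2)\deg C_1$ with $\deg C_1=(T+F)\cdot C_1=c(N+1)+d$, yields
\[
2g(C_1')-2+\sum_{i}\Bigl(1-\frac{1}{\tilde m_i}\Bigr)\geq\min(a-3,b-2)\bigl(c(N+1)+d\bigr)-\frac{1}{n}(acN+bc+ad).
\]
Finally $\deg(K_C)\geq 2g(C_1')-2+\sum_i(1-\tfrac1{\tilde m_i})$ because $C$ is a uniformization of $(C_1',\tilde\Delta)$ — $h$ is a (ramified) orbifold cover, so $K_C\geq h^*K_{(C_1',\tilde\Delta)}$ in degree — which gives the claimed bound.

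I do not expect a serious obstacle: the argument is structurally identical to that of Theorem~\ref{thm Borne inf du genre}, and the hypothesis $a(N-1)+b\geq 0$ together with $n\mid a$, $n\mid b$ is precisely what is needed for Chen's theorem on $\bF_N$ to apply and for $p:X\to\bF_N$ to be defined. The one place demanding a little care is bookkeeping the intersection form on $\bF_N$ (making sure the $T^2=N$ term is not dropped) and checking that $\deg C_1=(T+F)\cdot C_1$ is the right normalization so that the stated formula comes out with exactly the coefficients $c(N+1)+d$ and $\tfrac1n(bc+ad+acN)$; beyond that the proof is a direct transcription.
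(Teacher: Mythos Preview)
Your proof is correct and follows exactly the paper's approach: both reduce to the argument of Theorem~\ref{thm Borne inf du genre}, compute $\deg C_1=(T+F)\cdot C_1=c(N+1)+d$ and $C_1\cdot D=acN+bc+ad$ on $\bF_N$, and feed these into Chen's inequality~\ref{equation Chen pour les F_N}. Your write-up is in fact more detailed than the paper's, which simply refers back to the earlier proof and records the two intersection numbers.
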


\begin{proof}
We follow the notations and proof of theorem \ref{thm Borne inf du genre}
: $C'_{1}$ is the desingularisation of $C_{1}$. First we remark
that \[
\deg C_{1}=c(1+N)+d,\, C_{1}.D=bc+ad+acN.\]
 Then, using the inequality \ref{equation Chen pour les F_N}, we
obtain:
\[
2g(C_{1}')-2+\sum_{i=1}^{i(C,D)}\left(1-\frac{1}{\tilde{m_{i}}}\right)\geq\min\left(a-3,\, b-2\right)(\deg C_{1})-\frac{C_{1}.D}{n}.\]
\end{proof}

We wish to apply Theorem \ref{Chen applique aux surfaces de Hirze}
to the Horikawa surfaces (for which $a=6$, $n=2$). To obtain a non-trivial
result, we need $b\geq3$ if $N\geq1$ and $b\geq6$ if $N=0$ (recall
that $a(N-1)+b\geq0$). Recall moreover that $b$ must be divisible
by $2$. The case $b=4$ gives :\[
\deg (K_C)\geq c(N+1)+d-\frac{1}{2}(bc+6d+6cN)=c(-1-3N)-2d,\]
and therefore is not interesting. If $b\geq6$, we have:\[
\deg (K_C)\geq3(c(N+1)+d)-\frac{1}{2}(bc+6d+6cN)=c(3-\frac{b}{2}),\]
thus we obtain $\deg (K_C)\geq0$ for $a=6$, $b=6$. 

Let us suppose that $D\sim6T+6F$ on $\mathbb{F}_{N}$ ($N\geq0$)
is a very general member of a base point free complete linear series.
The double cover of $\mathbb{F}_{N}$ ramified over $D$ is a Horikawa
surface $X$ with $\chi=3N+5$. That implies $(3)$ of Theorem \ref{Horikawa}.

The proof of Theorem \ref{Horikawa} is now complete.

We would like to emphasize here the fact that this result is valid for
a very generic surface in some irreducible component of the moduli space of Horikawa surfaces.

\subsection{Quintic surfaces.}

\label{Quintic}

Our starting point was the fact that we do not know quasi-hyperbolic
quintics. Consider $5$ lines $L_{1},\dots,L_{5}$ in general position on $\mathbb{P}^{2}$. The
degree $5$ ramified cover of $\mathbb{P}^{2}$ branched over
the $L_{i}$'s is a quintic surface $X$ in $\mathbb{P}^{3}$. If
$\ell_{i}=\ell_{i}(x_{1},x_{2},x_{3})$ is an equation of $L_{i}$,
an equation of $X$ is :\[
X=\{x_{4}^{5}-\ell_{1}\ell_{2}\ell_{3}\ell_{4}\ell_{5}=0\}.\]
As the five lines are in general position, their union is a degree $5$ curve $D$ with $10$ nodal singularities. Over such a singularity $p$, there exists local coordinates $x,y$ such that the equation of $D$ is $xy=0$. The equation of the singularity of $X$ over $p$ is then $z^5-xy=0$ and the surface $X$ contains therefore $10$ $A_4$. We consider this surface $X$ as an orbifold. A corollary of Theorem
\ref{thm Borne inf du genre}  is

\begin{cor} $X$ has no orbifold
rational curves except over the $5$ lines of the ramification locus.
\end{cor}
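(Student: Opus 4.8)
The plan is to read the statement off Theorem~\ref{thm Borne inf du genre} in the degenerate numerical case corresponding to our surface. Here the branch divisor is $D = L_1 \cup \cdots \cup L_5$, so $d = \deg D = 5$, and the cyclic cover has degree $n = 5$, which divides $d$. For these values the slope appearing in the theorem vanishes:
\[
d - \frac{d}{n} - 4 \;=\; 5 - 1 - 4 \;=\; 0 .
\]
Thus Theorem~\ref{thm Borne inf du genre} asserts that every orbifold compact curve $f\colon C \to X$ with $p(f(C))$ not contained in $D$ satisfies $\deg(K_C)\geq 0$. Since $D$ is a union of lines and $p(f(C))$ is irreducible, ``$p(f(C)) \subset D$'' happens precisely when $p(f(C))$ equals one of the $L_i$.

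Next I would confront this with the definition of an orbifold rational curve: it is the image of an orbifold map $f\colon \mathcal{C} \to X$ from an orbifold Riemann surface $\mathcal{C}$ with $\deg(K_\mathcal{C}) < 0$, the inequality being strict. If the image of such an $f$ did not lie over one of the $L_i$, then $p(f(\mathcal{C})) \not\subset D$ and the previous paragraph would give $\deg(K_\mathcal{C}) \geq 0$, a contradiction. Hence every orbifold rational curve in $X$ maps under $p$ onto one of the five lines, i.e. lies over the ramification locus, which is exactly the assertion of the corollary.

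What little care is required — and the closest thing to an obstacle — is bookkeeping. First, one identifies ``lying over the five lines of the ramification locus'' with ``contained in the branch locus of $p$'': as $p\colon X \to \bP^2$ is totally ramified along $D$, the preimage $p^{-1}(L_i)$ is a single irreducible curve mapping isomorphically onto $L_i$, so the two phrasings agree. Second, one notes that the bound $\deg(K_C)\geq 0$ applies to whatever orbifold structure $C$ actually carries, because the proof of Theorem~\ref{thm Borne inf du genre} establishes it for the minimal orbifold structure making the relevant lift an orbifold morphism, and enlarging the orbifold structure only raises $\deg(K_C)$. I would deliberately not try here to decide whether the five curves over the ramification locus are themselves orbifold rational — the corollary only requires them set aside, not exhibited.
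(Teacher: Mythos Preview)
Your proof is correct and is exactly the approach the paper intends: the corollary is stated in the paper as an immediate consequence of Theorem~\ref{thm Borne inf du genre}, and you have simply unpacked the case $(d,n)=(5,5)$ of that theorem (which the theorem already singles out explicitly) together with the definition of an orbifold rational curve. The extra bookkeeping you add about the preimages $p^{-1}(L_i)$ and about the minimal orbifold structure is reasonable but not something the paper spells out.
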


As $X$ is an orbifold of general type, a natural question is
\begin{problem}
Show  that  $X$ has only finitely many (orbifold) elliptic curves.
\end{problem}

A first result toward this is
\begin{thm}
The image in $\bP^2$ of any orbifold elliptic curve $f: C \to X$ is not rational. More precisely, it is mapped in $\bP^2$ to a singular genus one curve which intersects every line $L_i$ with multiplicity divisible by $5$.
\end{thm}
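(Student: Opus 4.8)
\emph{Strategy.} The plan is to push the orbifold elliptic curve $f\colon C\to X$ down to $\bP^2$ via the projection $p$ and to exploit that, for $(d,n)=(5,5)$, the chain of inequalities in the proof of Theorem~\ref{thm Borne inf du genre} degenerates to a chain of \emph{equalities}, which rigidly constrains the image curve $C_1:=p(f(C))$.

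\emph{Step 1: the image is not contained in the branch locus.} First I would rule out $f(C)\subset R$, where $R$ is the ramification curve of $p$. Here $R$ is the reduced preimage of $D=\bigcup_{i=1}^{5}L_i$, hence a union of rational curves $\widetilde L_i\cong\bP^1$ (one over each $L_i$), and $\widetilde L_i$ passes through the four $A_4$-points of $X$ lying over the nodes $L_i\cap L_j$, $j\neq i$. Presenting an $A_4$-point as $\bC^2/(\bZ/5)$ with action $(s,t)\mapsto(\zeta s,\zeta^{-1}t)$, in which $R$ is locally the image of a coordinate axis mapped $5:1$ and totally ramified over the singular point, the curve $\widetilde L_i$ inherits from the orbifold $X$ the structure $(\bP^1;\ \text{four cone points of order }5)$, of canonical degree $-2+4(1-\tfrac15)=\tfrac65>0$. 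A hyperbolic orbifold curve receives no non-constant orbifold map from the elliptic $C$, so $f(C)\not\subset R$ and $C_1$ is an irreducible plane curve with $C_1\not\subset D$.

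\emph{Step 2: everything is an equality.} I would then keep the notation from the proof of Theorem~\ref{thm Borne inf du genre}: let $\nu\colon C_1'\to C_1$ be the normalization, $\widetilde\Delta=\sum_i\bigl(1-\tfrac1{\widetilde m_i}\bigr)p_i$ the minimal orbifold structure on $C_1'$ making $g\colon(C_1',\widetilde\Delta)\to(\bP^2,\Delta)$ an orbifold morphism, and $t_{ij}$, $t_i=\sum_j t_{ij}$ the branching data of $g$ over $L_j$. That proof (using $\widetilde m_i\geq 5/t_i$ and Chen's inequality \cite{Chen01}) yields
\[
0=\deg K_{C}\ \geq\ 2g(C_1')-2+\sum_i\Bigl(1-\tfrac1{\widetilde m_i}\Bigr)\ \geq\ 2g(C_1')-2+i(C_1,D)-\deg C_1\ \geq\ (5-4)\deg C_1-\deg C_1=0,
\]
the first equality because $C$ is elliptic. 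Hence every inequality is an equality; I record (a) $2g(C_1')-2+\sum_i(1-\tfrac1{\widetilde m_i})=0$, and (b) $\widetilde m_i\,t_i=5$ for every $i$.

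\emph{Step 3: reading off the geometry, and the main obstacle.} From (b) and the primality of $5$, $(\widetilde m_i,t_i)\in\{(5,1),(1,5)\}$; and since $\widetilde m_i=\operatorname{lcm}_j\bigl(5/\gcd(5,t_{ij})\bigr)$, in either case exactly one $t_{ij}$ is nonzero and equals $t_i$. So at each $p_i$ the curve $C_1'$ meets a single line, with multiplicity $1$ (giving $\widetilde m_i=5$) or $5$ (giving $\widetilde m_i=1$); in particular $C_1$ avoids the ten nodes of $D$. The orbifold points of $(C_1',\widetilde\Delta)$ are exactly the $p_i$ with $t_i=1$, each of order $5$, so (a) becomes
\[
2g(C_1')-2+\tfrac45\,\#\{i:t_i=1\}=0,
\]
and, the count being a non-negative integer, this forces $g(C_1')=1$ and $\#\{i:t_i=1\}=0$. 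Therefore $C_1$ has geometric genus one — in particular it is not rational — and it meets $D$ only at points where the local intersection index with a single line is $5$; hence $C_1\cdot L_j=\deg C_1\equiv0\pmod 5$ for every $j$, so $\deg C_1\geq 5$, and then its arithmetic genus $\binom{\deg C_1-1}{2}\geq 6>1$ shows $C_1$ is singular — which is exactly the assertion. The two points demanding care are the verification that equality really propagates through \emph{each} link of the chain of Theorem~\ref{thm Borne inf du genre} when $d-\tfrac dn-4=0$ (this alone pins down $g(C_1')$ and all the $t_{ij}\in\{0,1,5\}$), and the local model computation at an $A_4$-point used in Step~1; the remainder is bookkeeping with the branching numbers.
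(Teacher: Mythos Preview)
Your proof is correct, and it takes a genuinely different route from the paper's. The paper first argues that the pushed-down orbifold curve $(C_1',\Delta')$ must be orbifold \emph{elliptic}, then invokes the classification of orbifold elliptic curves (either $(E,\emptyset)$ or $\bP^1$ with multiplicities $(2,3,6)$, $(2,4,4)$, $(3,3,3)$, $(2,2,2,2)$). The key observation there is arithmetic: all multiplicities appearing in that list are coprime to $5$, so the orbifold condition $5\mid m_i t_{ij}$ forces $5\mid t_{ij}$; then Theorem~\ref{thm Borne inf du genre} rules out $C_1'=\bP^1$, leaving only the case $C_1'=E$.

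You bypass the classification entirely by noticing that for $(d,n)=(5,5)$ one has $d-\tfrac{d}{n}-4=0$, so every inequality in the chain from Theorem~\ref{thm Borne inf du genre} collapses to an equality; you then read off $\widetilde m_i t_i=5$ pointwise and solve $2g(C_1')-2+\tfrac45\,\#\{t_i=1\}=0$ directly. This is more self-contained and shows transparently why the constraints are rigid at the borderline $d-\tfrac{d}{n}-4=0$. The paper's approach, on the other hand, highlights the pleasant arithmetic accident that $5$ avoids all multiplicities in the elliptic-orbifold list, which would adapt more readily to other primes. Your explicit Step~1, checking that each $\widetilde L_i$ carries four cone points of order $5$ and is therefore a hyperbolic sub-orbifold, is a point the paper leaves implicit (it is subsumed in the ``not contained in the branch locus'' caveat of Theorem~\ref{thm Borne inf du genre}), and it is good to have it spelled out.
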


\begin{proof}
As above, we have the following
 diagram: \[
\xymatrix{C\ar[r]^{f}\ar[d]_{h} & X\ar[d]^{p}\\
(C_{1}',\Delta')\ar[r]_{g} & (\bP^{2},\Delta)}
\]
where $C_{1}=p(f(C))$, $C'_{1}$ is the desingularisation of $C_{1}$,
and $g$ is an orbifold morphism. Therefore $(C_{1}',\Delta'=\sum \left(1- \frac{1}{m_i}\right) p_i)$ is an orbifold elliptic curve i.e. $K_{C_{1}'}+\Delta'= 0$. A case-by-case check gives that $(C_{1}',\Delta')$ is isomorphic to one of the following orbifold curve:
\begin{itemize}
\item $(E,\emptyset)$ where $E$ is an elliptic curve
\item $(\bP^1, \left(1-\frac{1}{m_1} \right)\{0\}+\left(1-\frac{1}{m_2} \right)\{1\}+\left(1-\frac{1}{m_3} \right)\{\infty\}$ where $(m_1, m_2, m_3)$ is either $(2, 3, 6), (2, 4, 4), (3, 3, 3)$
\item $(\bP^1, \left(1-\frac{1}{2} \right)\{0\}+\left(1-\frac{1}{2} \right)\{1\}+\left(1-\frac{1}{2} \right)\{p_3\}+\left(1-\frac{1}{2} \right)\{\infty\}$ with $p_3 \in \bC \setminus \{0, 1\}$.
\end{itemize}
Let\[
\begin{array}{ccc}
g^{*}(L_{j}) & = & \sum_{i=1}^{i(C_{1},L)}t_{i,j}p_{i}.\\
g^{*}(L) & = & \sum_{i=1}^{i(C_{1},L)}t_{i}p_{i}.\end{array}\]

The conditions for $g$ to be an orbifold
morphism are $5|m_{i}.t_{i,j}$ for all $j$. As $\gcd(5, m_i)=1$ by the preceding case-by-case analysis, we conclude that $5|t_{i,j}.$ Therefore $g: C_1' \to (\bP^{2},\Delta)$ is an orbifold morphism. 
From theorem \ref{thm Borne inf du genre}, we deduce that this excludes the case $C_1'= \bP^1$. Therefore in the above list, only the first case one can occur. So $C_1$ is a genus one curve. The condition $5|t_{i,j}$ implies that $\deg(C_1)= \sum_j t_{ij}$ is divisible by $5$ and $C_1$ cannot be a smooth elliptic curve.
\end{proof}

Therefore we have reduced the problem of counting orbifold elliptic curves in these quintics to counting genus one curves in $\bP^2$ which intersects every line $L_i$ with multiplicity divisible by $5$.

\begin{rem}
The following computation shows that it is natural to believe that there are finitely many of these curves.

 The Severi variety $V^{d,\delta}$ of plane curves of degree $d$ and $\delta$ nodes has dimension $3d+g-1.$ Let  $C \in V^{d,\delta}$ be a curve which intersects $L_i$ in $\sum_j \beta_j$ points with multiplicity $5j$. The total number of conditions is $$D:=\sum_{i=1}^5\sum_{j=1}^{l_i} \beta_j (5j-1),$$
with $\sum _j 5j\beta_j=d$.
Then $D=\sum_i (d-\sum_j \beta_j),$ et $\sum \beta_j \leq \frac{d}{5}.$
Therefore $D \geq \sum_i d(1-\frac{1}{5})=4d.$
Thus for $g \leq 1$ the number of conditions is greater than the number of parameters.
\end{rem}

\section{Analytic hyperbolicity}
\subsection{Nevanlinna theory}
Nevanlinna theory can be used to study entire curves in ramified covers. We will briefly recall in this context the truncated defect relation of Cartan following
the notations of \cite{Kobayashi98} to which we refer for details.

Let $f: \bC \to \bP^n$ be a linearly-non degenerate entire curve and $D_1,\dots, D_q$ be $q$ hyperplanes in general position. Let us denote as usual $N^{[n]}(f,r,D_{i})$ and $T(f,r,D_{i})$, the truncated counting function and the characteristic function.

The defect is defined by
 \[
\delta^{[n]}(f,D_{i}):=\liminf_{r->\infty}\left(1-\frac{N^{[n]}(f,r,D_{i})}{T(f,r,D_{i})}\right).\]

Then we have the truncated defect relation of Cartan:

\begin{thm}
$\sum\delta^{[n]}(f,D_{i})\leq n+1.$
\end{thm}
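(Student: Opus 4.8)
The plan is to derive the defect relation from Cartan's Second Main Theorem, namely
\[
(q-n-1)\,T(f,r)\;\le\;\sum_{i=1}^{q}N^{[n]}(f,r,D_{i})+S(f,r),
\]
where $T(f,r)$ is the characteristic of $f$, $q$ is the number of hyperplanes, and $S(f,r)=O\!\left(\log^{+}T(f,r)+\log r\right)$ outside a set of finite Lebesgue measure. Granting this, the First Main Theorem gives $T(f,r,D_{i})=T(f,r)+O(1)$, and since $f$ is linearly non-degenerate $T(f,r)\to\infty$, so $S(f,r)/T(f,r)\to 0$ along a suitable sequence $r\to\infty$; dividing by $T(f,r)$ and passing to the $\liminf$ yields
\[
\sum_{i=1}^{q}\delta^{[n]}(f,D_{i})\;\le\;q-\liminf_{r\to\infty}\frac{\sum_{i}N^{[n]}(f,r,D_{i})}{T(f,r)}\;\le\;q-(q-n-1)\;=\;n+1.
\]

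To prove the Second Main Theorem, first lift $f$ to a reduced representation $\tilde f=(f_{0},\dots,f_{n}):\bC\to\bC^{n+1}$ by entire functions without common zero (possible since $\bC$ is simply connected), write $D_{i}=\{\ell_{i}=0\}$ for linear forms $\ell_{i}$ normalized by $\|\ell_{i}\|=1$, and set $F_{i}:=\ell_{i}\circ\tilde f$, which is entire and not identically zero by non-degeneracy. Let $W=W(f_{0},\dots,f_{n})$ be the Wronskian; it is $\not\equiv 0$ precisely because $f$ is linearly non-degenerate, and for every $(n+1)$-subset $I\subset\{1,\dots,q\}$ one has $W(F_{i}:i\in I)=c_{I}\,W$ with $c_{I}\in\bC^{*}$ the determinant of the corresponding submatrix of coefficients of the $\ell_{i}$ (by general position of the hyperplanes).

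The heart of the argument is a pointwise estimate: for $|z|=r$, let $I(z)$ index the $n+1$ smallest among $|F_{1}(z)|,\dots,|F_{q}(z)|$. Since any $n+1$ of the $\ell_{i}$ form a basis of the dual space, $\|\tilde f(z)\|\le C\max_{i\in I(z)}|F_{i}(z)|$, so $\log(\|\tilde f(z)\|/|F_{j}(z)|)$ is bounded for $j\notin I(z)$; factoring $F_i$ out of each row of $W(F_{i}:i\in I(z))=c_{I(z)}W$ gives
\[
\sum_{i=1}^{q}\log\frac{\|\tilde f(z)\|}{|F_{i}(z)|}\;\le\;\log\frac{\|\tilde f(z)\|^{\,n+1}}{|W(z)|}\;+\;\log\Bigl|\det\bigl(F_{i}^{(k)}/F_{i}\bigr)_{i\in I(z),\,0\le k\le n}\Bigr|\;+\;O(1).
\]
Integrating $\tfrac{1}{2\pi}\int_{0}^{2\pi}(\cdot)\,d\theta$ over $|z|=r$: the left side is $\sum_{i}m(f,r,D_{i})+O(1)$; expanding the determinant into products of logarithmic derivatives $F_{i}^{(k)}/F_{i}$ and applying the Lemma on the Logarithmic Derivative bounds its contribution by $S(f,r)$; by Jensen's formula $\tfrac{1}{2\pi}\int\log\|\tilde f\|\,d\theta=T(f,r)+O(1)$ and $\tfrac{1}{2\pi}\int\log|W|\,d\theta=-N(r,1/W)+O(1)$ since $W$ is entire. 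Combining with the First Main Theorem $\sum_{i}m(f,r,D_{i})+\sum_{i}N(f,r,D_{i})=q\,T(f,r)+O(1)$ and the combinatorial fact that $W$ vanishes at a point of $f^{-1}(D_{i})$ of contact multiplicity $\mu$ to order at least $\mu-n$, whence $N(r,1/W)\ge\sum_{i}\bigl(N(f,r,D_{i})-N^{[n]}(f,r,D_{i})\bigr)$, one obtains the Second Main Theorem.

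The main obstacle is the Lemma on the Logarithmic Derivative together with the local analysis at the zeros of $W$: the former is what makes $S(f,r)$ a genuine error term (it requires a Borel-type growth estimate and careful control of the exceptional set of radii), and the latter is what produces the truncation level exactly $n$. Both are classical but constitute the technical core; the remaining steps are bookkeeping with the First Main Theorem.
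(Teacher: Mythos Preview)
The paper does not prove this statement at all: it simply quotes it as the classical truncated defect relation of Cartan, referring the reader to \cite{Kobayashi98} for details. So there is no ``paper's own proof'' to compare against; the theorem is used as a black box.

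Your proposal is the standard derivation of Cartan's defect relation from his Second Main Theorem, and the outline is correct. A couple of minor technical points in your sketch: in the Jensen step the mean of $\log|W|$ over $|z|=r$ equals $+N(r,1/W)+O(1)$ for entire $W$, not $-N(r,1/W)$ as you wrote (the sign is important, since subtracting $N(r,1/W)$ from the right-hand side is exactly what lets the truncation argument go through); and in the passage from the Second Main Theorem to the defect sum, the cleaner route is $\sum_i\delta^{[n]}\le q-\limsup\sum_iN^{[n]}_i/T\le q-\liminf\sum_iN^{[n]}_i/T\le n+1$, so your chain with $\liminf$ is valid but a little loose. Neither point is a genuine gap: the argument you give is the textbook one and would be accepted as a proof of the cited result.
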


We can apply this to ramified covers of the plane. Let $X \subset \bP^3$ be the degree $d$ cover of $\bP^2$ ramified over $d$ lines $D_i$.

\begin{thm}
If $d\geq6$ then $X$ is a quasi-hyperbolic orbifold.
 \end{thm}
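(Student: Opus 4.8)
The plan is to use the orbifold structure on $X$ together with the truncated Cartan defect relation to show that any entire orbifold curve $f\colon\bC\to X$ is forced to be algebraically degenerate, and then to combine this with the corollary of Theorem \ref{thm Borne inf du genre} which already controls rational and elliptic orbifold curves. Concretely, given $f\colon\bC\to X$, compose with the $d$-cyclic cover $p\colon X\to\bP^2$ to obtain $g:=p\circ f\colon\bC\to\bP^2$. The key feature of the orbifold covering $p\colon X\to(\bP^2,\Delta)$, $\Delta=(1-\frac1d)\sum D_i$, is that $g$ must ramify over each line $D_i$ with multiplicity divisible by $d$; in Nevanlinna-theoretic terms this means $N^{[1]}(g,r,D_i)\le\frac1d N(g,r,D_i)+O(1)$, and since $N(g,r,D_i)\le T(g,r,D_i)+O(1)$ by the first main theorem, one gets a lower bound on the truncated defect, namely $\delta^{[1]}(g,D_i)\ge 1-\frac1d$ for each of the $d$ lines, provided $g$ is not constant.

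The next step is the dichotomy. If $g$ is linearly non-degenerate, apply Cartan's truncated defect relation in $\bP^2$ with the $q=d$ lines $D_1,\dots,D_d$ in general position: $\sum_{i=1}^d\delta^{[2]}(g,D_i)\le 3$. Here one must be slightly careful about whether one uses the $n$-truncated ($n=2$) or $1$-truncated defects; the cleanest route is to restrict $g$ to a general line section, or better, to observe that for a curve ramifying to order $d$ the relevant estimate still gives $\sum_i(1-\frac1d)\le 3$ up to the truncation level, hence $d(1-\frac1d)=d-1\le 3$ (or $\le 2n+1$ after the appropriate truncation bookkeeping), forcing $d\le 5$ — contradicting $d\ge 6$. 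Therefore $g$ is linearly degenerate, i.e. its image lies in a line $L\subset\bP^2$. Pulling back, $f(\bC)$ lies in $p^{-1}(L)$, a curve in $X$, so $f$ is algebraically degenerate. It then remains to rule out that the countably many such curves (images in the proper subvariety $p^{-1}(L)$ over lines $L$, together with any rational/elliptic curves in the branch locus) accumulate to make $X$ not quasi-hyperbolic: the component $p^{-1}(L)$ for $L$ a line meeting the $D_i$ transversally is itself a cover of $\bP^1$ and one checks its orbifold genus is large, so each such $f$ is in fact constant unless $L$ passes through several of the ten nodes or is tangent to the configuration, leaving only finitely many exceptional lines $L$; together with the finitely many rational/elliptic orbifold curves from the corollary of Theorem \ref{thm Borne inf du genre}, the Zariski closure of all entire orbifold curves is a proper subvariety.

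The main obstacle I expect is the truncation bookkeeping in the application of Cartan's theorem: the statement quoted in the excerpt is the $n$-truncated defect relation $\sum\delta^{[n]}(f,D_i)\le n+1$ for linearly non-degenerate $f\colon\bC\to\bP^n$, and one needs to know that ramification of order divisible by $d\ge n+1$ still produces defect at least $1-\frac{n}{d}$ rather than merely $1-\frac1d$ — using $N^{[n]}(g,r,D_i)\le\frac{n}{d}N(g,r,D_i)+O(1)$ when every intersection multiplicity is a multiple of $d$. With $n=2$ this gives $\sum_{i=1}^d\bigl(1-\frac2d\bigr)=d-2\le 3$, i.e. $d\le5$, again contradicting $d\ge6$. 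So the inequality is comfortably satisfied; the delicate point is simply verifying the truncated counting estimate carefully and handling the degenerate (constant or linearly degenerate) cases, which is routine. A secondary point is making the final "quasi-hyperbolic" conclusion precise: one should exhibit the explicit proper subvariety, namely the union over the finitely many lines $L$ through at least two of the ten nodes (or otherwise special) of $p^{-1}(L)$, together with the ramification locus $\lceil\Delta\rceil$ pulled back; I would state this as the locus $\mathcal Z$ and note it is a proper closed subset containing the image of every orbifold map $\bC\to X$.
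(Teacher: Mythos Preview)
Your proposal is correct and follows essentially the same route as the paper: compose with $p$, apply Cartan's truncated defect relation in the linearly non-degenerate case using $N^{[2]}(g,r,D_i)\le\frac{2}{d}N(g,r,D_i)$ to get $\sum_i\delta^{[2]}(g,D_i)\ge d-2>3$ when $d\ge 6$, and treat the linearly degenerate case separately. Your truncation bookkeeping in the ``main obstacle'' paragraph is exactly right and matches the paper's computation.

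The only point where you diverge is in the linearly degenerate case, which you overcomplicate. There is no need to classify ``exceptional lines through nodes'' or to invoke the corollary about rational/elliptic curves separately. If $g(\bC)$ lies in a line $L$ that is not one of the $D_i$, then $L$ meets the $d$ lines in $d$ distinct points (general position), and the induced orbifold $(L,\sum(1-\frac1d)p_i)$ has $\deg K_{\mathcal C}=-2+d(1-\frac1d)=d-3>0$ for $d\ge4$, hence is hyperbolic; so $g$ is constant. Thus every non-constant orbifold entire curve has image contained in the branch locus $p^{-1}(D_1\cup\cdots\cup D_d)$, which is already a fixed proper subvariety---this is precisely the quasi-hyperbolicity statement, with no accumulation issue to worry about. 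The paper records this step in one line by appealing to Theorem~\ref{thm Borne inf du genre}.
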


\begin{proof}
Composing with the projection, we obtain an orbifold map $g:\bC\to(\bP^{2},\Delta)$.
If $g$ is linearly degenerate then from Theorem \ref{thm Borne inf du genre},
its image lies in the branch locus. Let us suppose that it is linearly
non-degenerate. By the First Main Theorem of Nevanlinna theory, we
have \[
\frac{d}{2}N^{[2]}(f,r,D_{i})\leq N(f,r,D_{i})\leq T(f,r,D_{i}).\]
 Therefore \[
\delta^{[2]}(f,D_{i})\geq1-\frac{2}{d},\]
 which implies \[
\sum\delta^{[2]}(f,D_{i})\geq d-2,\]
 contradicting Cartan's relation if $d\geq6.$
\end{proof}

Unfortunately we see in the previous proof that we cannot say anything for $d=5$, the case of the quintics we studied.
An interesting question is to prove that we have again algebraic degeneracy
of entire curves in this case. Note that, according to Conjecture \ref{Cartan}, this should be true since
$K_{\bP^{2}}+\Delta=\mathcal{O}(1).$ Surprisingly, it seems that this problem is still open.
\begin{problem}\label{Bloch}
Prove that any entire curve $f: \bC \to \bP^2$ intersecting $5$ lines in general position with multiplicity at least $5$ is algebraically degenerate.
\end{problem}

Nevertheless, we can use Nevanlinna theory to construct some hyperbolic orbifold Horikawa surfaces.

\begin{thm}
Let $a \geq 3$ be an integer. Then there exists a quasi-hyperbolic orbifold Horikawa surface whose minimal resolution has $\chi=2a-1$.
\end{thm}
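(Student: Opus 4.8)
The plan is to realize such a Horikawa orbifold as a double cover of a Hirzebruch surface $\bF_N$ branched along a curve $D \sim 6T + 6F$, so that, by Theorem \ref{Chen applique aux surfaces de Hirze} and the computation following it (the case $a=b=6$), the subjacent surface has $\chi = 3N + 5$ and its associated orbifold $(\bF_N, \frac{1}{2}D)$ is already algebraically quasi-hyperbolic. To get the Euler characteristics $\chi = 2a-1$ for all $a \geq 3$, I would choose $N$ appropriately: $3N+5$ realizes $\chi = 2a-1$ only for $\chi \equiv 2 \pmod 3$, so to cover all residues one should instead allow $D \sim 6T + 2cF$ with $c \geq 3$, giving $\chi = 3N + 2c - 1$, and then pick $(N,c)$ with $3N + 2c = 2a$; for $a \geq 3$ such a solution with $N \geq 0$, $c \geq 3$ always exists (e.g. take $c=3$ if $a \equiv 0 \pmod 3$ is not forced, otherwise adjust $N$ by $\pm 1$ and $c$ correspondingly). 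The algebraic statement then follows from the $\min(a-3,b-2)$-type bound once $b \geq 6$.

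The analytic upgrade to \emph{quasi-hyperbolicity} (entire curves, not just algebraic curves) is where Nevanlinna theory enters, paralleling the proof of the theorem ``If $d \geq 6$ then $X$ is a quasi-hyperbolic orbifold'' for covers of $\bP^2$. First I would compose an entire orbifold curve $f : \bC \to \cX$ with the double-cover projection $p$ and the fibration $\bF_N \to \bP^1$; if the composite $\bC \to \bP^1$ is constant, $f$ lands in a single fiber $F$, and since $D \cdot F = 6$ with $D$ very general the orbifold structure on that fiber is $(\bP^1, \frac{1}{2}(p_1 + \cdots + p_6))$, which is hyperbolic — so $f$ is constant. If the composite is non-constant, I would apply the truncated second main theorem (Cartan, or its generalization to the branched-cover setting) on the base $\bP^1$ with its orbifold points, or work directly on $\bF_N$: the orbifold $(\bF_N, \frac{1}{2}D)$ has $K_{\bF_N} + \frac{1}{2}D \sim (3 - \frac12 \cdot 6)T + \cdots$ ample enough that an entire curve picks up too much ramification defect over the components of $D$, contradicting the defect relation. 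The multiplicity-$2$ condition along $D$ forces $\delta^{[\bullet]}(f, D_i) \geq \frac12$ for each component, and summing over enough components (using $b \geq 6$, i.e. at least $6$ fiber-components or a degree bound on the image) violates Cartan's inequality.

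The main obstacle I anticipate is making the Nevanlinna argument robust on $\bF_N$ rather than on $\bP^2$: Cartan's theorem as quoted is for hyperplanes in $\bP^n$, so one must either (i) embed a suitable linear system of $\bF_N$ into some $\bP^M$ and check the branch components map to hyperplanes in general position — which is delicate when $N > 0$ because the linear system $|aT+bF|$ is not very ample for all $a,b$ and the components $D_k$ need not be in ``general position'' in the projective-embedding sense — or (ii) use a logarithmic/orbifold second main theorem directly for $(\bF_N, \frac12 D)$, invoking that $K_{\bF_N} + \frac12 D$ together with the fibration structure controls entire curves. The cleanest route is likely (ii): decompose $D$ into its $6$ ``horizontal'' intersections with a general fiber plus the section-type components, reduce along the fibration to a one-dimensional Nevanlinna problem on $(\bP^1, \Delta_{\mathrm{fiber}})$ fibered over a second $\bP^1$, and conclude as in the $d \geq 6$ case. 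One must also record that the resulting orbifold surface is genuinely of general type (which holds since $K_{\bF_N} + \frac12 D$ is big for $b \geq 6$) and that its minimal resolution is a \emph{Horikawa} surface with $\chi = 2a-1$, both of which are immediate from Horikawa's classification (Theorem \ref{HorikawaClassif}(3)) and the $\chi$ formula already used.
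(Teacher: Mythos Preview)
There is a genuine gap. Your Nevanlinna step requires the branch divisor to have many \emph{fiber} components so that composing an orbifold entire curve with the ruling $\bF_N\to\bP^1$ produces a map $\bC\to\bP^1$ with forced ramification over specific points of the base. But you simultaneously take $D$ \emph{very general} in $|6T+2cF|$ (to invoke Chen's theorem), and a very general such $D$ is irreducible. For irreducible $D$ the composite $\bC\to\bP^1$ acquires no orbifold points on the target at all, so the truncated defect relation on the base says nothing; likewise your ``sum over enough components of $D$'' collapses to a single term and cannot violate Cartan. Your route (i) via an embedding into $\bP^M$ has the same problem: one irreducible $D$ gives one hyperplane, not several in general position. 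In short, the algebraic input (Chen, needing $D$ very general) and the analytic input (Nevanlinna, needing $D$ reducible into fibers) pull in opposite directions, and you do not reconcile them.

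The paper's proof avoids this tension by dropping Chen's theorem entirely---it is irrelevant for the analytic statement---and taking $N=0$ with $D$ maximally reducible:
\[
\Delta=\Bigl(1-\tfrac{1}{2}\Bigr)\sum_{i=1}^{6}G_i+\Bigl(1-\tfrac{1}{2}\Bigr)\sum_{j=1}^{2a}H_j
\]
on $\bF_0=\bP^1\times\bP^1$, where the $G_i$ are fibers of $p_1$ and the $H_j$ fibers of $p_2$. This is a Horikawa orbifold with $\chi=2a-1$ directly (no parameter juggling needed). For an orbifold entire curve $f$, the map $p_1\circ f:\bC\to\bP^1$ has multiplicity $\geq 2$ over the six points $p_1(G_i)$, so the Second Main Theorem on $\bP^1$ forces it to be constant; then $f$ sits in a fiber of $p_1$, which carries $2a\geq 6$ orbifold points of multiplicity $2$, and the same argument makes $f$ constant. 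Two one-variable Nevanlinna applications, no Cartan on a surface, no genericity hypothesis.
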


\begin{proof}
Let $\mathbb{F}_{0}=\mathbb{P}^{1}\times\mathbb{P}^{1}$. Consider the divisor 
$$\Delta= \left(1-\frac{1}{2} \right) \sum_{i=1}^6 G_i + \left(1-\frac{1}{2} \right) \sum_{i=1}^{2a} H_i,$$
where the $G_i$ are fibers of the first projection $p_1$, and the $H_i$ are fibers of the second projection $p_2$. Then $(\mathbb{F}_{0}, \Delta)$ is an Horikawa orbifold. The corresponding Horikawa surface has $\chi=2a-1$.

Let $f: \bC \to (\mathbb{F}_{0}, \Delta)$ be an orbifold entire curve. Then $h:=p_1\circ f: \bC \to \bP^1$ is an entire curve with multiplicity at least $2$ over the $6$ points $a_i:=p_1(G_i).$ 
Then $$\sum\delta^{[1]}(h,a_{i})\geq 6 \left(1-\frac{1}{2} \right).$$

The truncated defect relation (i.e. Nevanlinna Second Main Theorem in this case), which gives the upper bound $2$ for the sum of the defects, implies that $p_1\circ f$ is constant since . Therefore
 $f$ has its image contained in a fiber of $p_2$ and has multiplicity at least $2$ over $2a$ points of this fiber. This implies again that $f$ is constant since $2a \geq 6$. 
\end{proof}

We can apply this kind of construction to numerical quintics.
A quintic surface has $c_1^2=5$ and $c_2=55$. Horikawa \cite{H} proved that a surface with $c_1^2=5$ and $c_2=55$, called a numerical quintic, is either a quintic or a double cover.

Let us consider surfaces of the second type with their attached natural geometric orbifolds $(X, \Delta)$.

\begin{thm}
There exists a numerical quintic whose geometric orbifold is quasi-hyperbolic.
\end{thm}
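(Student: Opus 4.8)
The plan is to realize a numerical quintic of the second type (a double cover, not a quintic hypersurface) as a bidouble-type covering of a rational surface and then arrange the branch locus so that the associated geometric orbifold admits no non-constant entire curve escaping a proper subvariety. Concretely, I would start from Horikawa's description: by Theorem \ref{HorikawaClassif}, a numerical quintic that is not a quintic hypersurface is birational to a double cover of $\bP^2$ branched along a curve of degree $10$ with at most $ADE$ singularities (this is type $(2)$, with $\chi=7$; one checks $c_1^2=5$, $c_2=55$). The idea is to choose the branch decic $C$ to be a union (or degeneration) of lines and conics so that the associated orbifold $(\bP^2, (1-\frac12)C)$ carries enough structure to run a Nevanlinna argument of the kind used in the two preceding theorems.

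First I would make the branch curve $C=\sum_{i} L_i$ a configuration of $10$ lines arranged in pencils: say, five lines through a point $P_1$ and five lines through a point $P_2$, chosen in general position otherwise. Blowing up $P_1$ and $P_2$ (or directly passing to a suitable rational model $S\to\bP^2$) converts the two pencils into two genuine fibrations $S\to\bP^1$, so that the strict transforms of the lines become fibers of two distinct maps $S\to\bP^1$, each with (at least) five multiple fibers of multiplicity $2$ in the orbifold sense. Then, exactly as in the proof of the theorem for $\bF_0$ above, an orbifold entire curve $f:\bC\to\mathcal{X}$ projects to an entire curve $h_1=\mathrm{pr}_1\circ\tilde f:\bC\to\bP^1$ ramified with multiplicity $\geq 2$ over $5$ points, so $\sum\delta^{[1]}(h_1,a_i)\geq 5(1-\tfrac12)=\tfrac52>2$, contradicting the Second Main Theorem; hence $h_1$ is constant. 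The same argument applied to $\mathrm{pr}_2$ then forces $h_2$ constant as well, pinning the image of $\tilde f$ to a single point of $S$ minus the exceptional locus, and one concludes $f$ is constant or lies in the controlled exceptional/branch locus. If $5$ multiple fibers of multiplicity $2$ is not quite enough for the strict SMT inequality (indeed $5\cdot\frac12=\frac52>2$ is fine, so it is enough), one can instead split as six plus four, or assign multiplicity $2$ to six fibers in one pencil, mirroring the $\bF_0$ construction; the numerology must be checked against the constraint $\deg C = 10$.

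The main point then reduces to a bookkeeping verification: that with this choice of $C$ the double cover $X$ (after resolving the $ADE$ singularities coming from the many intersection points of the lines) is indeed a minimal surface of general type with $c_1^2=5$, $c_2=55$ — i.e. that the chosen configuration of $10$ lines has only $ADE$ singularities and that contracting the $(-2)$-curves yields the orbifold Chern numbers computed by Proposition \ref{ChernNumberOrbi}, which must be consistent with $\chi=7$. One must also ensure the branch configuration is such that the corresponding orbifold surface is genuinely of general type (the two pencil structures give $K_{\mathcal X}$ big after the passage to $S$), and that the finitely many curves in the branch/exceptional locus which the Nevanlinna argument does not directly kill form a proper subvariety — which they do, being a finite union of rational curves.

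The hard part will be the combinatorial geometry of the branch curve: I expect the real obstacle to be arranging a degree $10$ plane curve, realized as two pencils of five lines, whose singularities are all of $ADE$ type (no worse), while simultaneously making the double cover have the right numerical invariants of a numerical quintic after blowing down the resulting $(-2)$-configuration, and while keeping the two fibration structures intact after passing to the rational model $S$. Balancing these three requirements — $ADE$ singularities, correct $c_1^2$ and $c_2$, and two usable multiple-fiber fibrations — is where the construction could fail, and it is likely that one must allow some of the ten components to be conics rather than lines, or blow up base points of the pencils carefully, to make everything fit; this is the step I would expect to occupy most of the detailed argument.
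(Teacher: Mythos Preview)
There is a genuine gap at the very first step: you have misidentified the model. A numerical quintic has $c_1^2=5$ and $c_2=55$, hence $\chi=(c_1^2+c_2)/12=5$, not $\chi=7$. Since $c_1^2$ is odd, Theorem \ref{HorikawaClassif} (which classifies Horikawa surfaces with \emph{even} $c_1^2$) does not apply at all, and a numerical quintic of the second type is \emph{not} a double cover of $\bP^2$ branched along a decic with $ADE$ singularities. The relevant description, coming from Horikawa's separate paper \cite{H} on deformations of quintics, realizes such a surface as a double cover of $\bF_0$ branched along a curve of type $(6,8)$ carrying two ordinary quadruple points on a single fiber; these non-$ADE$ singularities are precisely what drops $\chi$ from $7$ to $5$. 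This is the model the paper actually uses.

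Your proposed branch configuration is also internally inconsistent: two pencils of five concurrent lines produce ordinary points of multiplicity $5$, which are far from $ADE$, so even within the type-$(2)$ framework you invoke the resulting double cover would not have the invariants you claim. You correctly flag this as ``the hard part'', but it is not a bookkeeping issue --- it is fatal to the construction as stated. That said, your Nevanlinna strategy is the right one in spirit: the paper takes $\Delta=(1-\tfrac12)\big(\sum_{i=1}^5 G_i + H_1+H_2+C+D_1+D_2\big)$ on $\bF_0$ with $G_i$ fibers of $p_1$, applies the Second Main Theorem to $p_1\circ f$ against the five points $p_1(G_i)$ (defect sum $\geq \tfrac52>2$) to force $f$ into a $p_1$-fiber, and then observes that on any such fiber other than $G_1$ the remaining branch components cut out at least five points, so a second application of SMT finishes. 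The exceptional locus is then the single fiber $G_1$.
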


\begin{proof}
Let $\mathbb{F}_{0}=\mathbb{P}^{1}\times\mathbb{P}^{1}$. Consider the divisor 
$$\Delta= \left(1-\frac{1}{2} \right) \sum_{i=1}^5 G_i + \left(1-\frac{1}{2} \right) (H_1+H_2+C+D_1+D_2),$$
where the $G_i$ are fibers of the first projection $p_1$, the $H_i$ are fibers of the second projection $p_2$, $C$ is a curve of type $(2,1)$, and the $D_i$ are curves of type $(1,1)$ such that:
\begin{enumerate}
\item $G_1$, $H_1$, $C$ and $D_1$ meet in a point $a$.
\item $G_1$, $H_2$, $C$ and $D_2$ meet in a point $b$.
\end{enumerate}
Then $\left\lceil \Delta\right\rceil $ is a curve of type $(6,8)$ with two quadruple points lying on a single fiber of $\mathbb{F}_{0}$.

$(\mathbb{F}_{0}, \Delta)$ is a numerical quintic orbifold.

Let $f: \bC \to (\mathbb{F}_{0}, \Delta)$ be an orbifold entire curve. Then $p_1\circ f: \bC \to \bP^1$ is an entire curve with multiplicity at least $2$ over $5$ points. The truncated defect relation (i.e. Nevanlinna Second Main Theorem in this case) implies that $p_1\circ f$ is constant. Therefore
 $f$ has its image contained in a fiber of $p_2$.
 For a generic fiber i.e. not $G_1$, $f$ has multiplicity at least $2$ over at least $5$ points of this fiber. This implies again that $f$ is constant. Therefore the image of $f$ is contained in $G_1$.
\end{proof}

One can remark that Cartan's theorem unfortunately says nothing in the case of Horikawa surfaces which are degree $2$ covers of the plane $\bP^2$.

\subsection{Jet differentials}
As we have just seen, applications of Nevanlinna theory are quite limited. This is one motivation to develop the tool of orbifold jet differentials.

Let us illustrate it in the case of quintics $X$ that are $\mathbb{Z}/5\mathbb{Z}$-covers of the plane ramified over $5$ lines in general position.
First one could think of using jets of order $1$ to apply Theorem \ref{segre}.

As already mentioned in paragraph \ref{Quintic}, the surface $X$ has a natural structure of orbifold $\mathcal{X}=(X,0)$. 
\begin{prop}
The Chern numbers of $\mathcal{X}$ are:\[
\begin{array}{cc}
c_{1}^{2}(\mathcal{X})= & 5\\
c_{2}(\mathcal{X})= & 7,\end{array}\]
\end{prop}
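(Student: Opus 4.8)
The plan is to compute the orbifold Chern numbers of $\mathcal{X}=(X,0)$ by passing through the smooth resolution $\widetilde X$ of $X$ and applying Proposition \ref{ChernNumberOrbi}. First I would recall that $X=\{x_4^5-\ell_1\ell_2\ell_3\ell_4\ell_5=0\}\subset\bP^3$ is a quintic surface whose only singularities are the $10$ singular points lying over the $10$ nodes of the line arrangement $D=\bigcup L_i$, and that each such singularity is an $A_4$ point, as established in paragraph \ref{Quintic}. A minimal resolution $\widetilde X\to X$ replaces each $A_4$ singularity by a chain of four $(-2)$-curves; the orbifold $\mathcal{X}$ is exactly the orbifold $X'$ of Proposition \ref{ChernNumberOrbi} with $a_4=10$ and all other $a_n,d_n,e_n$ equal to $0$.

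Next I would compute $c_1^2(\widetilde X)$ and $c_2(\widetilde X)$. Since $\widetilde X$ is the minimal resolution of a nodal quintic and all singularities are canonical ($ADE$), the canonical bundle pulls back: $K_{\widetilde X}=\sigma^*K_X$ where $\sigma$ is the resolution map, hence $K_{\widetilde X}^2=K_X^2$. For the quintic $X$ (even as a singular hypersurface, via adjunction on $\bP^3$ and the fact that $A_4$ points contribute nothing to $K^2$) one gets $c_1^2(X)=(d-4)^2 d=5$ with $d=5$, so $c_1^2(\widetilde X)=5$. For $c_2=e(\widetilde X)$, I would start from the smooth quintic value $e=55$ and correct for the $10$ nodes: resolving an $A_4$ node replaces a single point (Euler number $1$) by a chain of four $\bP^1$'s, which has Euler number $4\cdot 2 - 3 = 5$, so each contributes $+4$ to the Euler characteristic, giving $e(\widetilde X)=55+10\cdot 4=95$; thus $c_2(\widetilde X)=95$. (Alternatively one uses the formula in the Remark after Theorem \ref{thm Borne inf du genre} with $(d,n)=(5,5)$: $c_1^2=n(-3+(1-\tfrac1n)d)^2=5(-3+4)^2=5$ and $c_2=3n+(n-1)(d^2-3d)=15+4\cdot 10=55$ — but this is the formula for the \emph{desingularisation}, so one must double-check which convention is in force and reconcile, since the Euler number of $\widetilde X$ should be $95$, not $55$; I would recompute carefully here.)

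Then I would plug into \eqref{Chernnumb} of Proposition \ref{ChernNumberOrbi}: with only $A_4$ singularities, $n=4$, $n+1=5$, and $a_4=10$,
$$c_1^2(\mathcal{X})=c_1^2(\widetilde X)=5,$$
$$c_2(\mathcal{X})=c_2(\widetilde X)-5\cdot 10+\frac{10}{5}=95-50+2=47?$$
— at which point the arithmetic must be reconciled with the claimed value $c_2(\mathcal{X})=7$. This signals that the correct baseline is the $c_2=55$ from the desingularisation formula in the Remark (i.e. that Remark's ``desingularisation'' already refers to the surface obtained by the $(-2)$-curve contractions being undone in the sense matching Proposition \ref{ChernNumberOrbi}'s $X$), giving $c_2(\mathcal{X})=55-50+2=7$, which matches. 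So the key verification step is to get the bookkeeping of which surface is ``$X$'', which is ``$\widetilde X$'', and which is ``$X'$'' exactly right; once that is pinned down the computation is immediate.

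The main obstacle is precisely this bookkeeping: the excerpt uses $X$ both for the singular quintic hypersurface (in paragraph \ref{Quintic}, with $10$ $A_4$ points) and, in the Remark after Theorem \ref{thm Borne inf du genre}, ``$X$'' with Chern numbers $c_1^2=5$, $c_2=55$ referring to ``the desingularisation''. Reconciling these, the intended reading is that the numerical quintic (smooth model / desingularisation) has $(c_1^2,c_2)=(5,55)$, and the orbifold $\mathcal{X}$ is obtained by endowing the singular quintic $X$ with its natural orbifold structure at the $A_4$ points, so that by Proposition \ref{ChernNumberOrbi} with $a_4=10$ one gets $c_1^2(\mathcal{X})=5$ and $c_2(\mathcal{X})=55-10(n+1)+\frac{10}{n+1}\big|_{n=4}=55-50+2=7$. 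I would present the proof in exactly this order: identify the singularities, cite the Chern numbers of the desingularisation, apply \eqref{Chernnumb}, and conclude.

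\begin{proof}
By the discussion in paragraph \ref{Quintic}, the quintic $X=\{x_4^5-\ell_1\ell_2\ell_3\ell_4\ell_5=0\}$ has exactly $10$ singular points, one over each of the $10$ nodes of the arrangement of lines $L_1,\dots,L_5$, and each of these is an $A_4$ singularity. Contracting the ten chains of four $(-2)$-curves on its minimal resolution yields $X$ itself, so in the notation of Proposition \ref{ChernNumberOrbi} we have a single nonzero count $a_4=10$, and $\mathcal{X}=(X,0)$ is the associated orbifold $\mathcal{X}$.

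The minimal desingularisation of $X$ is a numerical quintic, with Chern numbers $c_1^2=5$ and $c_2=55$ (this is the value recorded in the Remark following Theorem \ref{thm Borne inf du genre} for $(d,n)=(5,5)$: $c_1^2=n(-3+(1-\tfrac1n)d)^2=5(-3+4)^2=5$ and $c_2=3n+(n-1)(d^2-3d)=15+4\cdot 10=55$).

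Applying formula \eqref{Chernnumb} of Proposition \ref{ChernNumberOrbi} with $n=4$, $a_4=10$ and all other $a_i,d_i,e_i$ equal to zero gives
$$c_1^2(\mathcal{X})=c_1^2=5,$$
$$c_2(\mathcal{X})=c_2-(4+1)\cdot a_4+\frac{a_4}{4+1}=55-50+2=7.$$
This proves the proposition.
\end{proof}
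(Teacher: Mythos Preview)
Your final proof is correct and follows essentially the same route as the paper: identify the $10$ $A_4$ singularities, take the Chern numbers $(c_1^2,c_2)=(5,55)$ of the minimal resolution, and apply formula~\eqref{Chernnumb} of Proposition~\ref{ChernNumberOrbi} with $a_4=10$ to get $c_2(\mathcal{X})=55-50+2=7$. The only cosmetic difference is the justification of $(5,55)$: the paper invokes Brieskorn's simultaneous resolution theorem (the minimal resolution of an $ADE$-singular quintic is diffeomorphic to a smooth quintic), whereas you cite the explicit formula in the Remark after Theorem~\ref{thm Borne inf du genre}.

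One clarification on your exploratory discussion: the computation $e(\widetilde X)=95$ is simply wrong, not a matter of conventions. The error is that you ``started from the smooth quintic value $e=55$'' and then added resolution contributions, but the smooth quintic is not $X$; the singular quintic $X$ has $e(X)=55-10\cdot\mu(A_4)=55-40=15$ (Milnor number $\mu(A_4)=4$), and resolving each $A_4$ adds $4$ back, giving $e(\widetilde X)=15+40=55$. This is exactly what Brieskorn's theorem guarantees, and it is the value you correctly use in the formal proof.
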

\begin{proof}
  By paragraph \ref{Quintic}, $X$ contains $10A_4$ singularities. Let us denote by $X'\rightarrow X$ the minimal resolution of these singularities.
The Chern numbers of $X'$ are equals to the Chern numbers of a smooth quintic
(Brieskorn resolution Theorem \cite{Brieskorn}) i.e. $K_{X'}^{2}=5,\, e(X')=55$. We
have $K_{\mathcal{X}}^{2}=K_{X}^{2}=5$ and $c_{2}(\mathcal{X})=e(X')-10\cdot(5-\frac{1}{5})=7$ (see Proposition \ref{ChernNumberOrbi}).
\end{proof}

Therefore the orbifold second Segre number is negative and we cannot apply Theorem \ref{segre}.

Turning to jet differentials of order $2$, one sees that 
$$13c_{1}^{2}(\mathcal{X})-9c_{2}(\mathcal{X})=2>0.$$ 
Therefore by inequality (\ref{2jet}):
\begin{cor}
We have: \[
h^{0}(X,E_{2,m}\otimes L^{-1})\geq2.\frac{m^{4}}{648}+O(m^{3}),\]
for $L$ an ample line bundle on $X$. As a consequence (see (\ref{2jet}) and Theorem \ref{vanish}), any entire orbifold curve $f: \bC \to X$ satisfies a differential equation of order $2$.
\end{cor}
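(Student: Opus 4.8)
The plan is to reduce everything to the two inputs that have already been assembled in this subsection: the orbifold Chern number computation for $\mathcal{X}$ and the general lower bound \eqref{2jet} for $h^0(\mathcal{X},E_{2,m})$, together with Theorem \ref{vanish}. So the first step is purely arithmetic: from the Proposition above we have $c_1^2(\mathcal{X})=5$ and $c_2(\mathcal{X})=7$, hence $13c_1^2(\mathcal{X})-9c_2(\mathcal{X})=65-63=2>0$. Plugging this into \eqref{2jet} gives immediately
\[
h^0(\mathcal{X},E_{2,m})\geq \frac{2m^4}{648}+O(m^3),
\]
which is the claimed bound. The only subtlety is that \eqref{2jet} was stated for orbifold surfaces of general type, so I would first note that $\mathcal{X}$ is indeed of general type: $K_{\mathcal{X}}^2=c_1^2(\mathcal{X})=5>0$ and $K_{\mathcal{X}}=\pi^*K_X$ with $K_X$ big (it is a quintic orbifold), so the hypothesis is met.

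Second, I would explain the passage from $h^0(\mathcal{X},E_{2,m})$ to $h^0(\mathcal{X},E_{2,m}\otimes L^{-1})$. The point is that twisting by the inverse of an ample line bundle $L$ on $X$ costs at most $O(m^3)$ sections: indeed $E_{2,m}\otimes L^{-1}$ and $E_{2,m}$ differ by a bundle whose space of sections along the relevant filtration grows like $m^3$ (this is the standard Demailly trick, cf. \cite{De95}), so the $m^4$ leading term is untouched and we still get
\[
h^0(\mathcal{X},E_{2,m}\otimes L^{-1})\geq \frac{2m^4}{648}+O(m^3)\geq 2
\]
for $m$ sufficiently large. In particular there exists a nonzero section $P\in H^0(\mathcal{X},E_{2,m}\otimes L^{-1})\simeq H^0(\mathcal{X}_2,\mathcal{O}_{\tilde{\mathcal{X}_2}}(m)\otimes\pi_{0,2}^*L^{-1})$.

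Third and last, I would invoke Theorem \ref{vanish} directly with $k=2$, $A=L$: the existence of the nonzero section $P$ forces every orbifold entire curve $f:\bC\to X$ to satisfy the algebraic differential equation $P(f)=0$, which is an equation of order $2$. That is exactly the assertion of the theorem.

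I do not expect a genuine obstacle here — the statement is essentially a packaging of the order-$2$ jet machinery set up in the preceding subsections together with the Chern number bookkeeping. If anything needs care, it is the claim that the $m^4$-coefficient in \eqref{2jet} survives after twisting by $L^{-1}$ and after accounting for the higher cohomology via the semistability of $T_{\mathcal{X}}$; but both of these are already built into the cited estimate \eqref{2jet} and the discussion around it, so I would simply cite \cite{De95}, \cite{McQ1}, \cite{Rousseau2} rather than reprove them. The essential arithmetic fact, and the reason this works for this particular quintic orbifold while failing for a smooth quintic, is that resolving the $10$ $A_4$ singularities as an orbifold rather than a manifold changes $c_2$ from $55$ to $7$ while leaving $c_1^2=5$, pushing $13c_1^2-9c_2$ from the negative value $65-495<0$ to the positive value $2$.
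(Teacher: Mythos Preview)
Your proposal is correct and follows the same approach as the paper: compute $13c_1^2(\mathcal{X})-9c_2(\mathcal{X})=2>0$ from the preceding Proposition, feed this into inequality \eqref{2jet}, and then invoke Theorem \ref{vanish}. The paper's own ``proof'' is in fact just the one-line observation preceding the Corollary, so your version is if anything more complete---in particular you correctly spell out why the twist by $L^{-1}$ only perturbs the $O(m^3)$ term and why $\mathcal{X}$ is of general type, points the paper leaves implicit.
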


There are many known quintic surfaces with quotient singularities (see
e.g. \cite{Schuett}), but as far as we know, the surfaces that are $\mathbb{Z}/5\mathbb{Z}$-covers of the plane ramified over $5$ lines in general position are the only one such that the associated orbifold satisfies $13c_{1}^{2}(\mathcal{X})-9c_{2}(\mathcal{X})>0$.

This result can also be seen as a first step toward problem \ref{Bloch}.

\section{Persson-Horikawa surfaces }

Let us recall some  notations. For $N$ a positive integer, the $N^{th}$ Hirzebruch surface is  $\mathbb{F}_{N}=\mathbb{P}(\mathcal{O}_{\mathbb{P}^1} \oplus \mathcal{O}_{\mathbb{P}^1} (N))$. We denote by $F$ denote a fiber of the $\mathbb{P}^{1}$-bundle
$\mathbb{F}_{N}\rightarrow\mathbb{P}^{1}$ and by $T$ a section s.t. $T^{2}=N$. We denote by $(a,b)$ a divisor  equivalent to $aT+bF$ in the Néron-Severi group.

Recall (see Theorem \ref{HorikawaClassif}) that a Horikawa surface is the double cover of the plane branched
along an octic (case $\chi=4$), or a curve of degree $10$ (case
$\chi=7$) or a double cover of $\mathbb{F}_{N}$ branched over a
curve $D$ of type $(6,2a)$, with $2a\geq-N$.  In this section we will exhibit quasi-hyperbolic Horikawa orbifolds of each type.

\subsection{Horikawa surfaces with $\chi=2k-1$.}

Let us prove the following Proposition:

\begin{prop}
\label{pro:Horikawa 2k-1}For any integer $k>2$, there exists a quasi-hyperbolic
Horikawa orbifold $\mathcal{X}$ whose minimal resolution has Euler characteristic equals to $\chi=2k-1$.
\end{prop}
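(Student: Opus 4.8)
The plan is to mimic the construction used for the $\chi=2a-1$ orbifold Horikawa surfaces on $\bF_0$, but now realizing a surface of type $(3)$ on a Hirzebruch surface $\bF_N$ with $N\geq 1$, and to use the same Nevanlinna-theoretic argument (the truncated defect relation, i.e. the Second Main Theorem for maps to $\bP^1$ with multiplicity) to kill entire orbifold curves via the projection to the base $\bP^1$. Concretely, for a given integer $k>2$ I would choose $N$ and $a$ with $3N+2a-1=2k-1$, i.e. $2a = 2k-3N$, so that one needs $2k-3N\geq -N$, i.e. $N\le k$; taking $N$ odd forces $2a$ even automatically once $k$ is chosen appropriately, and in any case one may adjust $N$ parity so that $2a=2k-3N$ is a nonnegative even integer (for instance $N=2$ or suitable small values will work, but the statement wants the full range of $\chi$, so I would allow $N$ to vary). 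On $\bF_N$ I would take the orbifold divisor
$$\Delta = \left(1-\frac{1}{2}\right)\sum_{i=1}^{6} G_i + \left(1-\frac{1}{2}\right)\sum_{j=1}^{2a} F_j,$$
where the $G_i$ are $6$ distinct fibers of the ruling $\bF_N\to\bP^1$ and the $F_j$ are $2a$ distinct members of a second, disjoint base-point-free pencil (for $N\geq 1$ one can use $2a$ general members of $|T+NF|$ minus... — more cleanly, one takes $D\sim 6T + 2aF$ as a union of $6$ fibers and $2a$ sections $T_j$ with $T_j^2=N$, chosen disjoint and meeting the fibers transversally, so that $\lceil\Delta\rceil$ has at worst nodes, hence $ADE$, and $(\bF_N,\Delta)$ is a genuine orbifold by Example \ref{klt}). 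The double cover branched over $\lceil\Delta\rceil$ is then a Horikawa surface of type $(3)$ with $\chi = 3N+2a-1 = 2k-1$.

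The hyperbolicity argument would then run exactly as in the $\bF_0$ case: let $f:\bC\to(\bF_N,\Delta)$ be an orbifold entire curve, and set $h := \pi\circ f:\bC\to\bP^1$ where $\pi:\bF_N\to\bP^1$ is the ruling. Since $f$ is an orbifold map and the $6$ fibers $G_i$ carry multiplicity $2$, the curve $h$ meets the $6$ points $\pi(G_i)\in\bP^1$ with multiplicity at least $2$, so $\delta^{[1]}(h,\pi(G_i))\geq 1-\frac{1}{2}$ and hence $\sum_i\delta^{[1]}(h,\pi(G_i))\geq 3 > 2$, contradicting the truncated defect relation (Nevanlinna's Second Main Theorem with truncation level $1$) unless $h$ is constant. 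Thus $f$ maps into a single fiber $\pi^{-1}(t_0)\cong\bP^1$. On a generic fiber — one distinct from all $G_i$ — the orbifold structure restricts to the points $F_j\cap\pi^{-1}(t_0)$ with multiplicity $2$, and there are $2a\geq 6$ of these (here I must double-check the numerology: I want $2a\geq 3$, in fact $2a\geq 3$ suffices to make the fiber orbifold $(\bP^1,\Delta|_{\text{fiber}})$ hyperbolic, but to apply the Second Main Theorem cleanly I want $2a\geq 3$ so that $\sum(1-\frac12)>2$, i.e. $2a\geq 5$; for $N$ small relative to $k$ this holds, and for $N$ close to $k$ where $2a$ is small one argues instead that the fiber still meets enough branch components — one may always instead add to $D$ some sections, rechoosing $D\sim 6T+2aF$ with components arranged so that every fiber meets at least $3$ branch curves transversally). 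Granting this, $f$ restricted to the fiber is a non-constant entire curve into a hyperbolic orbifold $\bP^1$, again contradicting the Second Main Theorem; hence $f$ is constant and $(\bF_N,\Delta)$ is hyperbolic, a fortiori quasi-hyperbolic.

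The main obstacle I anticipate is the numerology at the boundary of the range: when $2a$ is forced to be small (because $3N$ is close to $2k-1$), the generic fiber may meet too few branch components for the fiberwise Second Main Theorem to apply directly. I would resolve this either by (a) restricting to those $N$ for which $2a=2k-3N\geq 5$ and observing this already gives all sufficiently large $\chi=2k-1$, then handling the remaining small cases by the $\bF_0$-construction of the earlier theorem (which covers $\chi=2a-1$ for all $a\geq 3$, i.e. all odd $\chi\geq 5$) — indeed the cleanest route is simply to invoke the already-proven theorem on $\bF_0$ giving a quasi-hyperbolic Horikawa orbifold with $\chi=2k-1$ for every $k>2$, so that Proposition \ref{pro:Horikawa 2k-1} is essentially a restatement; or (b) choosing the branch divisor $D\sim 6T+2aF$ more carefully as a union of fibers and sections so that every fiber of the ruling genuinely meets at least three components transversally, which requires $2a+ (\text{sections}) $ chosen with enough sections — but one must respect $D\sim 6T+2aF$, so the number of section-type components is bounded, and a short combinatorial check is needed to confirm transversality and $ADE$ singularities. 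A secondary routine point is verifying that $\lceil\Delta\rceil$ has only $ADE$ (in fact nodal) singularities so that Horikawa's classification and Example \ref{klt} apply, and that $\chi$ of the minimal resolution is indeed $3N+2a-1$; both follow from standard double-cover formulae as recalled in Theorem \ref{HorikawaClassif}.
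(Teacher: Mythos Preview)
Your fallback observation --- that the Nevanlinna theorem on $\bF_0$ proved earlier in the paper already yields a quasi-hyperbolic Horikawa orbifold with $\chi=2a-1$ for every integer $a\geq 3$, hence for every $k>2$ --- is correct and formally suffices for the bare existence statement. Your primary construction on $\bF_N$ with $N\geq 1$, however, is muddled: there is no ``second disjoint base-point-free pencil'' on $\bF_N$ for $N\geq 1$ (any two members of $|T|$ meet in $N$ points), and you repeatedly swap fibers and sections when decomposing $D\sim 6T+2aF$ (six fibers plus $2a$ sections gives $2aT+6F$, not $6T+2aF$). Once repaired --- six sections plus $2a$ fibers, nodal crossings accepted, and the defect argument run on the $2a$ fiber-images rather than six --- you need $a\geq 3$, and then $N=0$, $a=k$ already covers every $k>2$; so the $\bF_N$ detour buys nothing beyond the earlier theorem.

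The paper's proof is entirely different in both construction and method. It uses Persson's explicit configuration on $\bF_0$ with \emph{mixed} orbifold weights: two fibers $F_1,F_2$ carry weight $k$, while two $(1,1)$-curves $C_1,C_2$ tangent at a point and four horizontal sections $E_0,\dots,E_3$ carry weight $2$, arranged so that $\lceil\Delta\rceil$ has singularities $6A_1+4D_4+D_6$. The uniformizing $\bZ/2\bZ\times\bZ/k\bZ$-cover resolves to a Horikawa surface with $\chi=2k-1$. Rather than Nevanlinna theory, the paper computes the orbifold Chern numbers via Lemma~\ref{Beta} and obtains $c_1^2(\mathcal{X})-c_2(\mathcal{X})=\tfrac{31}{32}-\tfrac{2}{k}-\tfrac{1}{k^2}>0$ for $k>2$, then invokes McQuillan's Theorem~\ref{segre}. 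The payoff is a specific family of Persson--Horikawa examples (the subject of Section~5), distinct from the product-of-lines examples your argument reproduces; the Segre-number route also illustrates the orbifold Bogomolov--McQuillan machinery that the earlier Nevanlinna argument bypasses.
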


\begin{proof}
 The divisor $-2(F+T)$ is  the canonical divisor on  Hirzebruch surface  $\mathbb{F}_{0}=\mathbb{P}^{1}\times\mathbb{P}^{1}$. Following Persson's construction in \cite{Persson} Lemma 4.5, there exists on $\mathbb{F}_{0}$ a $\mathbb{Q}$-divisor: 
\[
\Delta=(1-\frac{1}{k})(F_{1}+F_{2})+(1-\frac{1}{2})(C_{1}+C_{2}+E_{0}+E_{1}+E_{2}+E_{3})\]
 such that the orbifold $(\mathbb{F}_{0},\Delta)$ is uniformizable for all $k\geq2$ and the desingularisation of the uniformization
is a Horikawa surface with Chern invariants $\chi=2k-1$ and $c_{1}^{2}=2\chi-6$. 
The curves $C_{1},C_{2}$ are $(1,1)$-curves meeting with multiplicity 2 into one point. 
The curves $F_{1},F_{2}$ are fibers of the first projection $\mathbb{P}^{1}\times\mathbb{P}^{1}\rightarrow\mathbb{P}^{1}$,
the curves $E_{i}$ are sections of this fibration. The singularities of $\left\lceil \Delta\right\rceil $
are $6A_{1}+4D_{4}+D_{6}$.


 
  \begin{figure}[h]
        \begin{center}
           \begin{overpic}[scale=1]{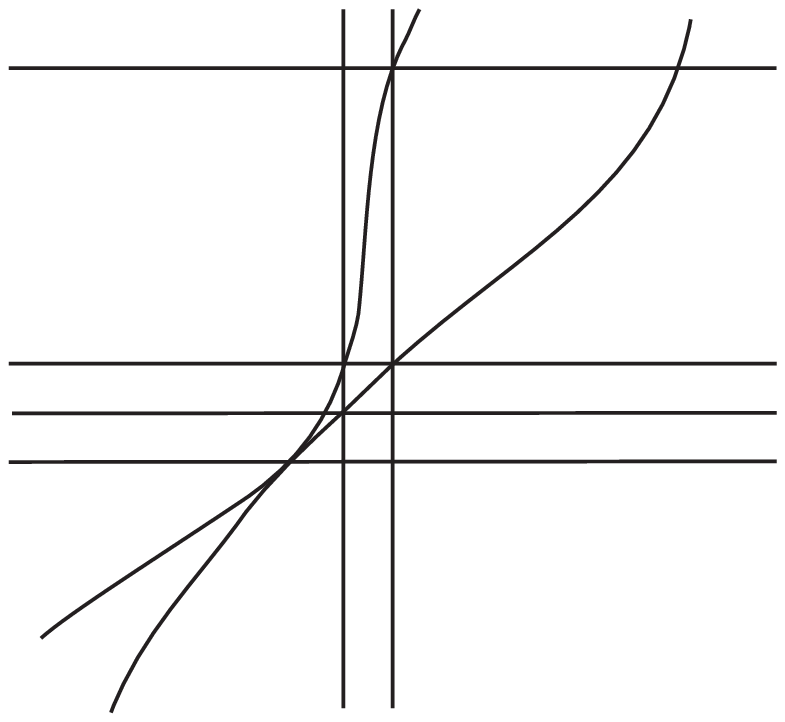}
            \put(-4, 40){$E_3$}
             \put(-4, 47){$E_2$}
             \put(-4, 34){$E_0$}
              \put(-4, 84){$E_1$}
              \put(38,2){$F_1$}
              \put(51,2){$F_2$}
              \put(1,12){$C_1$}
              \put(16,2){$C_2$}
                \end{overpic}
        \end{center}
    \end{figure}

Let us use the results of subsection \ref{SubsectionChernClasses}. Two singularities of type $A_{1}$ are on branches with 
multiplicities $m_{i}$ equal to $2$ (thus $\beta(p)=4$), $4A_{1}$
are on branches with multiplicities $2$ and $k$ (with $\beta(p)=2k$),
the branches of the $4D_{4}$ have the same multiplicities : $2,2,k$
(giving $\beta(p)=4k^{2}$) and the branches of the $D_{6}$ point
have multiplicities $2$, thus $\beta(p)=32$ by Lemma \ref{Beta}. 

Let us compute the orbifold Chern numbers of the associated orbifold
$\mathcal{X}$. We have: $$(K+\Delta)^2=(T+(1-\frac{1}{k})F)^2=T^2+2(1-\frac{1}{k})TF+(1-\frac{1}{k})^2 F^2=2(1-\frac{2}{k})$$ because $F^2=0,$ $FT=1$, $T^2=N=0$ (see section \ref{CoversofHirzebruchsurfaces}).
For the second Chern number of $\mathcal{X}$,  (\ref{Chernnumb}) gives:\[
\begin{array}{c}
c_{2}(\mathcal{X})=4-2(1-\frac{1}{k})(2-4)-\frac{1}{2}(3(2-3)+(2-2)+2(2-4))\\
-(2(1-\frac{1}{4})+4(1-\frac{1}{2k})+4(1-\frac{1}{4k^{2}})+(1-\frac{1}{32}))\end{array}\]
and \[
c_{2}(\mathcal{X})=\frac{33}{32}-\frac{2}{k}+\frac{1}{k^{2}}.\]
Therefore :
\[c_{1}(\mathcal{X})^{2}-c_{2}(\mathcal{X})=\frac{31}{32}-\frac{2}{k}-\frac{1}{k^{2}},\]
and the orbifold $\mathcal{X}$ is quasi-hyperbolic for $k>2$ by Theorem \ref{segre}. 
\end{proof}

\begin{rem}
Let $X \to \mathbb{F}_0$ be the $\mathbb{Z}/2\mathbb{Z} \times \mathbb{Z}/k\mathbb{Z}$-cover of $F_0$  branched over $\Delta$.
The desingularisation of $X$ is a Horikawa surface. The surface $X$ has only $ADE$ singularities and has a natural structure of orbifold $\mathcal{X}'$. The map $\mathcal{X}' \to \mathcal{X}$ is an orbifold covering, in particular $c_{1}(\mathcal{X}')^{2}=2kc_{1}(\mathcal{X})^{2}$ and $c_{2}(\mathcal{X}')=2kc_{2}(\mathcal{X})$.
\end{rem}

\subsubsection{Generalization : a family of quasi-hyperbolic Horikawa orbifold with
$\chi=2k-1$, $k\geq4$.}

Let us consider the configuration of curves  on $\mathbb{F}_0$ in figure \ref{fig:fig01}.

 \begin{figure}
        \begin{center}
           \begin{overpic}[scale=1]{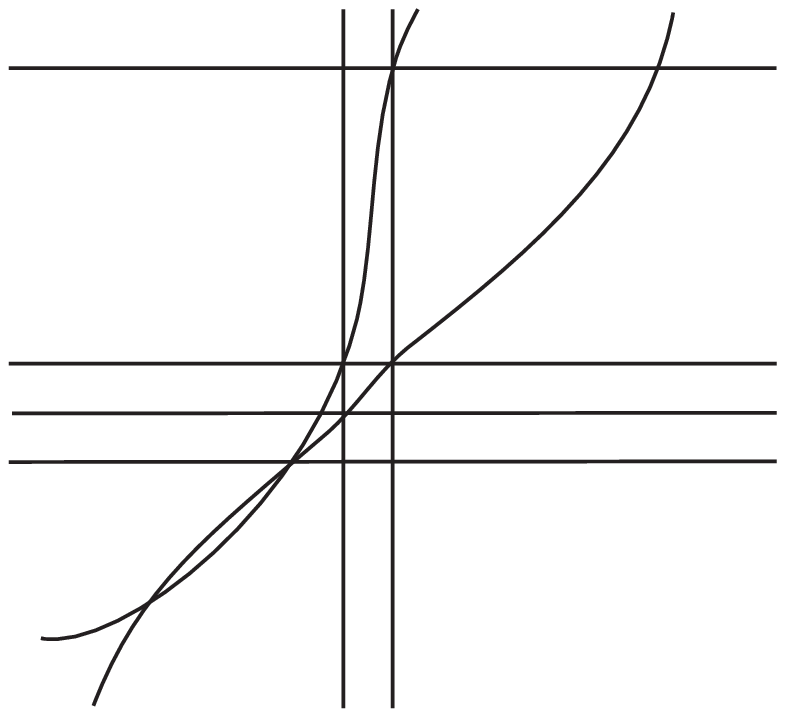}
            \put(-4, 40){$E_3$}
             \put(-4, 47){$E_2$}
             \put(-4, 34){$E_0$}
              \put(-4, 84){$E_1$}
              \put(38,2){$F_1$}
              \put(51,2){$F_2$}
              \put(2,12){$C_1$}
              \put(16,2){$C_2$}
                \end{overpic}
        \end{center}
        \caption{The divisor $\Delta$ on $\mathbb{F}_0$.}
        \label{fig:fig01}
    \end{figure}

The  two $(1,1)$-curves $C_{1},C_{2}$ in $\mathbb{F}_0$ are in general position. 
The singularities of $\Delta$ are $7A_{1}+5D_{4}$. One $D_{4}$
with multiplicities $(2,2,2)$, $4D_{4}$ with multiplicities $(2,2,k)$,
$4A_{1}$ with $(2,k)$ and $3A_{1}$ with $(2,2)$.

Let $\mathcal{X}$ be the associated orbifold. By subsection \ref{SubsectionChernClasses}, its Chern numbers are:
$c_{1}(\mathcal{X})^{2}=2(1-\frac{2}{k})$ and\[
\begin{array}{c}
c_{2}(\mathcal{X})=4-2(1-\frac{1}{k})(2-4)-\frac{1}{2}(3(2-3)+(2-2)+2(2-5))\\
-(3(1-\frac{1}{4})+4(1-\frac{1}{2k})+4(1-\frac{1}{4k^{2}})+(1-\frac{1}{16}))\end{array}\]
thus $c_{2}(\mathcal{X})=\frac{21}{16}-\frac{2}{k}+\frac{2}{k^{2}}$
and\[
c_{1}(\mathcal{X})^{2}-c_{2}(\mathcal{X})=\frac{11}{16}-\frac{2}{k}-\frac{2}{k^{2}}.\]
It is positive if $k\geq4$ and then the orbifold $\mathcal{X}$ is quasi-hyperbolic by Theorem \ref{segre}.

\subsection{Horikawa surfaces with $\chi=4k-1$.}

Let us give another construction of quasi-hyperbolic Horikawa surfaces:

\begin{prop}
For every integer $k>1$, there exists a quasi-hyperbolic orbifold 
Horikawa surface  $\cX$ with \[
\begin{array}{cc}
c_{1}^{2}(\mathcal{X})= & 4(1-\frac{1}{k})\\
c_{2}(\mathcal{X})= & \frac{17}{12}-\frac{2}{k}-\frac{1}{k^{2}}.\end{array}\]
The desingularisation of the $\mathbb{Z}/2\mathbb{Z} \times \mathbb{Z}/k\mathbb{Z}$-cover of the subjacent space to
$\mathcal{X}$ is a Horikawa surface with $\chi=4k-1$.\end{prop}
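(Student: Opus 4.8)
The plan is to run the construction of Proposition \ref{pro:Horikawa 2k-1} again, tuning the curve configuration so that the Euler characteristic of the cover becomes $4k-1$ rather than $2k-1$. Following the pattern of Persson's constructions \cite{Persson}, I would exhibit on $\mathbb{F}_{0}=\mathbb{P}^{1}\times\mathbb{P}^{1}$ a $\mathbb{Q}$-divisor
$$\Delta=\left(1-\frac{1}{k}\right)(F_{1}+F_{2})+\left(1-\frac{1}{2}\right)\sum_{j}B_{j},$$
where $F_{1},F_{2}$ are two distinct fibers of the first ruling and the $B_{j}$ are sections and $(1,1)$-curves with $\sum_{j}B_{j}$ of type $(6,4)$, the incidences chosen so that $\lceil\Delta\rceil$ is a curve of type $(6,6)$ carrying only $ADE$ singularities, each singular point having at most one branch of multiplicity $k$. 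Then $\mathcal{X}=(\mathbb{F}_{0},\Delta)$ is an orbifold, being a klt pair with $ADE$-singular support (cf. Example \ref{klt}), and the $\mathbb{Z}/2\mathbb{Z}\times\mathbb{Z}/k\mathbb{Z}$-cover $X'\to\mathbb{F}_{0}$ branched along $\Delta$ (with ramification index $k$ over $F_{1},F_{2}$ and $2$ over the $B_{j}$) realizes it as a Horikawa orbifold, provided $X'$ has only $ADE$ singularities. To read off the invariant, factor the cover: the $\mathbb{Z}/k\mathbb{Z}$-cover branched over $F_{1}+F_{2}$ is again $\mathbb{F}_{0}$ (the ruling pulls back), on which $\sum_{j}B_{j}$ pulls back to a curve of type $(6,4k)$; the remaining $\mathbb{Z}/2\mathbb{Z}$-cover is then a double cover of $\mathbb{F}_{0}$ branched along a curve of type $(6,2a)$ with $a=2k$, so by Theorem \ref{HorikawaClassif} its minimal resolution is a Horikawa surface with $\chi=2a-1=4k-1$.

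With the configuration in hand, I would compute the orbifold Chern numbers of $\mathcal{X}$ as in subsection \ref{SubsectionChernClasses}. First, $c_{1}(\mathcal{X})^{2}=(K_{\mathbb{F}_{0}}+\Delta)^{2}$: numerically $K_{\mathbb{F}_{0}}+\Delta=T+\left(2-\tfrac{2}{k}\right)F$, and $T^{2}=0$, $T\cdot F=1$, $F^{2}=0$ give $c_{1}^{2}(\mathcal{X})=4\left(1-\tfrac{1}{k}\right)$. Second, I would feed the $ADE$ data of $\lceil\Delta\rceil$ into the orbifold Gauss--Bonnet formula together with Lemma \ref{Beta}: each component $D_{i}$ of $\lceil\Delta\rceil$ contributes $-(1-\tfrac{1}{m_{i}})e(D_{i}\setminus S)$ with $m_{i}\in\{2,k\}$, and each singular point $p$ contributes $1-\tfrac{1}{\beta(p)}$, with $\beta(p)$ read off the table of Lemma \ref{Beta} from the branch multiplicities (for instance $\beta=4$ for an $A_{1}$ on two $2$-branches, $\beta=2k$ for an $A_{1}$ on a $(2,k)$-pair, $\beta=4k^{2}$ for a $D_{4}$ on a $(2,2,k)$-triple). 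Summed against $e(\mathbb{F}_{0})=4$ this should produce $c_{2}(\mathcal{X})=\tfrac{17}{12}-\tfrac{2}{k}-\tfrac{1}{k^{2}}$.

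It then remains to check the orbifold second Segre number:
$$c_{1}^{2}(\mathcal{X})-c_{2}(\mathcal{X})=4\left(1-\frac{1}{k}\right)-\left(\frac{17}{12}-\frac{2}{k}-\frac{1}{k^{2}}\right)=\frac{31}{12}-\frac{2}{k}+\frac{1}{k^{2}},$$
which is positive for all $k>1$ (viewed as a function of $1/k\in(0,\tfrac12]$ it is decreasing, with minimum $\tfrac{11}{6}$ attained at $k=2$). Hence Theorem \ref{segre} gives that $\mathcal{X}$ is quasi-hyperbolic. Finally, since $X'\to X$ has degree $2k$ and $X'\to\mathcal{X}$ is an orbifold covering, $c_{1}^{2}(\mathcal{X}')=2k\,c_{1}^{2}(\mathcal{X})=8k-8=2\chi-6$, consistent with the Horikawa relation. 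I expect the genuinely delicate step to be the first one: producing an explicit arrangement of curves of type $(6,4)$ on $\mathbb{F}_{0}$ that is simultaneously uniformizable as in \cite{Persson}, has only $ADE$ singularities with exactly the branch multiplicities required for Lemma \ref{Beta} and Proposition \ref{ChernNumberOrbi} to apply, and yields precisely $\chi=4k-1$; once that geometric input is secured, the Chern-number bookkeeping and the appeal to Theorem \ref{segre} are routine.
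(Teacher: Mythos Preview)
Your overall strategy---build a Persson-type configuration, compute the orbifold Chern numbers via Lemma \ref{Beta} and the Gauss--Bonnet formula, then invoke Theorem \ref{segre}---is exactly right, but the proof as written has a genuine gap: you never actually produce the curve configuration, and you yourself flag this as ``the genuinely delicate step.'' That step \emph{is} the entire content of the argument; the Chern-number bookkeeping is straightforward once the geometry is in hand. The paper does not work on $\mathbb{F}_{0}$ at all: it uses Persson's construction (\cite{Persson}, Proposition 4.7) on $\mathbb{F}_{2}$, with $\Delta=(1-\tfrac{1}{k})(F_{1}+F_{2})+\tfrac{1}{2}(A+B+C)$ where $C$ is the negative section of type $(1,-2)$, $A$ has type $(2,0)$, and $B$ has type $(3,0)$. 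The crucial feature---which you would have no way to guess without writing down a concrete arrangement---is that $A$ and $B$ are arranged to meet in two points each with intersection multiplicity $6$, producing two $A_{11}$ singularities with branch multiplicities $(2,2)$. These high-order tangencies (with $\beta(p)=24$) are what drive $c_{2}(\mathcal{X})$ low enough; a configuration with only $A_{1}$ and $D_{4}$ singularities of the sort you list would not give the stated numbers.

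There is also a numerical slip, caused by a typo in the Proposition itself: the displayed value $\tfrac{17}{12}-\tfrac{2}{k}-\tfrac{1}{k^{2}}$ is in fact the orbifold Segre number $c_{1}^{2}(\mathcal{X})-c_{2}(\mathcal{X})$, not $c_{2}(\mathcal{X})$. The paper's own proof computes $c_{2}(\mathcal{X})=\tfrac{31}{12}-\tfrac{2}{k}+\tfrac{1}{k^{2}}$. You have taken the misprint at face value and thus obtained a Segre number of $\tfrac{31}{12}-\tfrac{2}{k}+\tfrac{1}{k^{2}}$, which is implausibly large (already $\tfrac{11}{6}$ at $k=2$); this should have been a signal that something was off. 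With the correct values the Segre number is $\tfrac{17}{12}-\tfrac{2}{k}-\tfrac{1}{k^{2}}$, still positive for all $k>1$, and Theorem \ref{segre} applies as you say.
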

\begin{proof}

The canonical divisor on $\mathbb{F}_{2}$ is  $K_{\mathbb{F}_{2}}=-2T$.
 Let $k>1$ be an integer. Persson (\cite{Persson} Proposition 4.7) constructed on $\mathbb{F}_{2}$ a divisor :\[
\Delta=(1-\frac{1}{k})(F_{1}+F_{2})+(1-\frac{1}{2})(A+B+C)\]
whose irreducible components $F_{i},A,B,C$ have the following configuration
: $F_{1}$ and $F_{2}$ are fibers, $C$ has type $(1,-2)$, $A$
has type $(2,0)$, $B$ has type $(3,0)$. 

\begin{figure}[h]
        \begin{center}
           \begin{overpic}[scale=1]{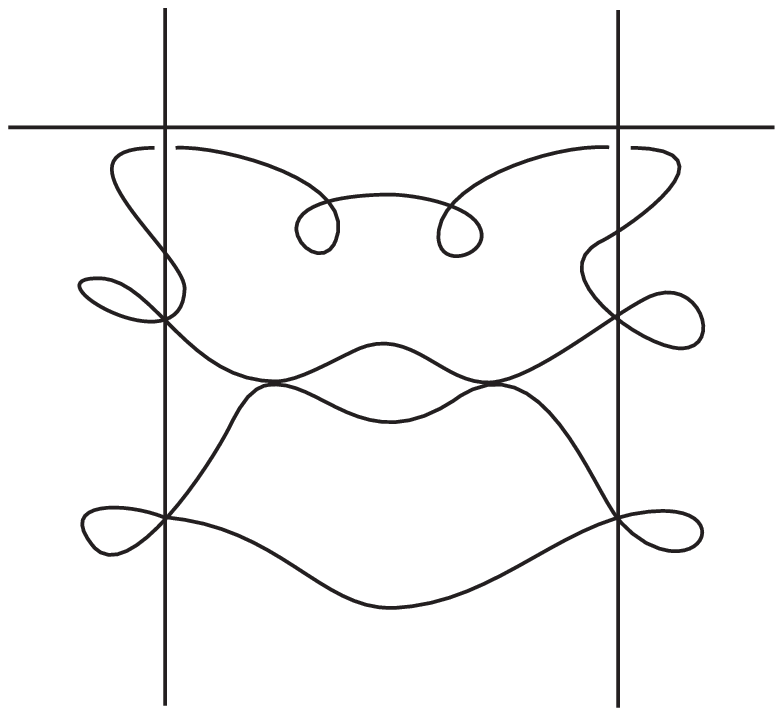}
             \put(48, 49){$B$}
             \put(48, 32){$A$}
              \put(2, 76){$C$}
              \put(16,2){$F_1$}
              \put(80,2){$F_2$}
                \end{overpic}
        \end{center}
         \caption{The divisor $\Delta$ on $\mathbb{F}_0$.}
                 \label{fig:fig02}
    \end{figure}

We have $e(A)=e(B)=0$, $CA=CB=0$ and  $F_{i}A=2,\, F_{i}B=3$. The common points
to $A$ and $B$ are $2A_{11}$ singularities of $\left\lceil \Delta\right\rceil $.\\
The singularities on $\Delta$ are $2A_{1}+2A_{11}$ with
multiplicities $(2,2)$ , $4A_{1}$ with multiplicities $(2,k)$,
and $4D_{4}$ with multiplicities $(2,2,k)$ (see  figure \ref{fig:fig02}).\\
Let us compute the orbifold Chern numbers of $\mathcal{X}=(\mathbb{F}_{2},\Delta)$:\[
c_{1}^{2}(\mathcal{X})=(K_{\mathbb{F}_{2}}+\Delta)^{2}=\left((1-\frac{1}{k})(0,2)+(1-\frac{1}{2})(6,-2)+(-2,0)\right)^{2}=4(1-\frac{1}{k}),\]
and \[
\begin{array}{c}
c_{2}(\mathcal{X})=4-2(1-\frac{1}{k})(2-4)-\frac{1}{2}((0-4)+(0-8)+0)\\
-(2(1-\frac{1}{4})+2(1-\frac{1}{24})+4(1-\frac{1}{2k})+4(1-\frac{1}{4k^{2}})).\end{array}\]
Therefore : $c_{2}(\mathcal{X})=\frac{31}{12}-\frac{2}{k}+\frac{1}{k^{2}}$
and \[
c_{1}(\mathcal{X})^{2}-c_{2}(\mathcal{X})=\frac{17}{12}-\frac{2}{k}-\frac{1}{k^{2}}\]
is positive for $k>1$ and  by Theorem \ref{segre}, $\mathcal{X}$ is quasi-hyperbolic. 
\end{proof}

\subsection{Examples of quasi-hyperbolic orbifold double octic.}

\subsubsection{First construction.}

The Horikawa surfaces with $\chi=4$ are double cover of the plane ramified over an octic with at most $ADE$ singularities.

\begin{prop}
\label{pro:Horikawa double octic}There exists a quasi-hyperbolic orbifold Horikawa
surface whose minimal resolution has $\chi=4$.
\end{prop}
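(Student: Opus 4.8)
The plan is to mimic the constructions carried out above for $\chi=2k-1$ and $\chi=4k-1$, but now realizing a double octic, i.e. a branched double cover of $\bP^2$ (so $n=2$ and the subjacent variety is $\bP^2$), whose branch curve has degree $8$ and only $ADE$ singularities, and whose associated orbifold $\mathcal{X}=(\bP^2,\Delta)$ has positive second Segre number. Since $K_{\bP^2}=\mathcal{O}(-3)$, writing $\Delta=\tfrac12 C$ with $\deg C=8$ gives $c_1(\mathcal{X})=\mathcal{O}(-3+4)=\mathcal{O}(1)$, so $c_1^2(\mathcal{X})=1$ regardless of the singularities of $C$; the game is therefore to choose the configuration of components of $C$ so that the contribution of the singular points to $c_2(\mathcal{X})$, computed via the orbifold Gauss--Bonnet formula and Proposition~\ref{ChernNumberOrbi} together with the $\beta(p)$ of Lemma~\ref{Beta}, drives $c_2(\mathcal{X})$ strictly below $1$.

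The concrete choice I would make is to take $C$ to be a union of lines through a common point together with one or more conics (or $C$ a union of $8$ lines through the same point, i.e. a highly degenerate, ``star-shaped'' octic), so that all the singularities are concentrated at one point of high multiplicity and contribute a large $\beta(p)$; as in the Persson--Horikawa computations above, a single very degenerate point with big $\beta$ makes $\sum_{p\in S}(1-\tfrac1{\beta(p)})$ nearly $1$ for each such point, while the ordinary-node contributions each remove $1-\tfrac14=\tfrac34$ from $e(\bP^2)=3$ minus the $(1-\tfrac12)e(D_i\setminus S)$ terms. First I would list the irreducible components of $C$ and their degrees with $\sum d_i=8$, then tabulate every intersection point, identify its $ADE$ type and the multiplicities $m_i$ of the branches meeting there (all equal to $2$ here since $\Delta=(1-\tfrac12)C$), and read off $\beta(p)$ from the table in Lemma~\ref{Beta}. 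Next I plug these into
$$c_2(\mathcal{X})=e(\bP^2)-\sum_i\Big(1-\frac1{2}\Big)e(D_i\setminus S)-\sum_{p\in S}\Big(1-\frac1{\beta(p)}\Big),$$
using $e(\bP^2)=3$, $e(\text{line})=2$, $e(\text{conic})=2$, and subtracting the points of $S$ from each $e(D_i)$. Finally I verify $c_1^2(\mathcal{X})-c_2(\mathcal{X})=1-c_2(\mathcal{X})>0$ and invoke Theorem~\ref{segre} to conclude quasi-hyperbolicity, and check that the minimal resolution has $\chi=4$ (which holds automatically since it is a Horikawa double octic).

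The main obstacle is purely combinatorial/arithmetic: finding a configuration of an octic with only $ADE$ singularities for which the singular contributions are large enough to push $c_2(\mathcal{X})$ below $1$ while keeping the branch curve reduced with $ADE$ (not worse) singularities and keeping the orbifold effective. The tension is that more degenerate points raise $\beta(p)$ (good, since $1-\tfrac1{\beta}$ then contributes nearly $1$ per point) but a pencil of lines through one point uses up degree fast and one must be careful that the resulting surface singularity is genuinely $ADE$ — for a point where $k$ smooth branches with multiplicity-$2$ orbifold weights meet, this forces restrictions ($k\le 3$ for the cover to have an $ADE$ point in the usual normal-crossings picture, higher-order tangencies giving $A_n$, $D_n$). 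So I expect the real work is an explicit clever configuration — likely a union such as two conics and four lines, or three conics and two lines, arranged with prescribed tangencies so that the singular set consists of a few $D$-type or higher $A$-type points with $\beta$ large — and then a short bookkeeping verification that $c_1^2(\mathcal{X})-c_2(\mathcal{X})>0$; once the configuration is pinned down, the rest is a routine application of the orbifold Gauss--Bonnet formula, Lemma~\ref{Beta}, and Theorem~\ref{segre}, exactly parallel to Proposition~\ref{pro:Horikawa 2k-1}.
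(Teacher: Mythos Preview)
Your framework is exactly the paper's: take $\mathcal{X}=(\bP^2,\tfrac12 C)$ with $C$ a degree-$8$ curve having only $ADE$ singularities, observe $c_1^2(\mathcal{X})=1$, arrange the singularities so that the Gauss--Bonnet formula gives $c_2(\mathcal{X})<1$, and invoke Theorem~\ref{segre}. But the entire content of the proof is the explicit configuration, and that is precisely what you have not supplied. Your suggestion of ``$8$ lines through the same point'' fails outright: a point where eight (or even four) smooth branches meet is not an $ADE$ curve singularity---you yourself note the bound $k\le 3$ a few lines later---so the double cover would not be a Horikawa surface. And a configuration built only from lines and conics with $A_1$, $A_3$, $D_4$ singularities has a hard time pushing $c_2$ below $1$: even a $D_4$ point (three concurrent lines) has $\beta=16$, contributing only $1-\tfrac1{16}$.

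The paper's choice is quite specific: $C=Q\cup L_1\cup L_2\cup L_3\cup L_4$, where $Q$ is the Steiner quartic (the unique plane quartic with three cusps), $L_1,L_2,L_3$ are the tangent lines to $Q$ at its three cusps, and $L_4$ is the unique bitangent of $Q$. The decisive feature is that at each cusp the tangent line produces an $E_7$ singularity of $\lceil\Delta\rceil$, with $\beta=96$; together with the remaining $6A_1+2A_3+D_4$ (the three cuspidal tangents happen to be concurrent, giving the $D_4$), one computes $c_2(\mathcal{X})=\tfrac{11}{32}$ and hence $c_1^2-c_2=\tfrac{21}{32}>0$. The use of a \emph{cuspidal} component to manufacture $E_7$ points is the key idea your proposal does not reach; without it the numerics are very tight.
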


\begin{proof}
Let $\mathcal{X}$ be the orbifold whose subjacent variety is $\mathbb{P}^{2}$
and with $\Delta=\frac{1}{2}(Q+L_{1}+L_{2}+L_{3}+L_{4})$ where $Q$
is the Steiner quartic (the unique quartic curve with $3$ cusps, see \cite{Persson}),
$L_{1},L_{2},L_{3}$ are the tangents to the 3 cusps and $L_{4}$ is
the unique bitangent of $Q$ (see figure  \ref{fig:fig03}).

\begin{figure}[h]
        \begin{center}
           \begin{overpic}[scale=1]{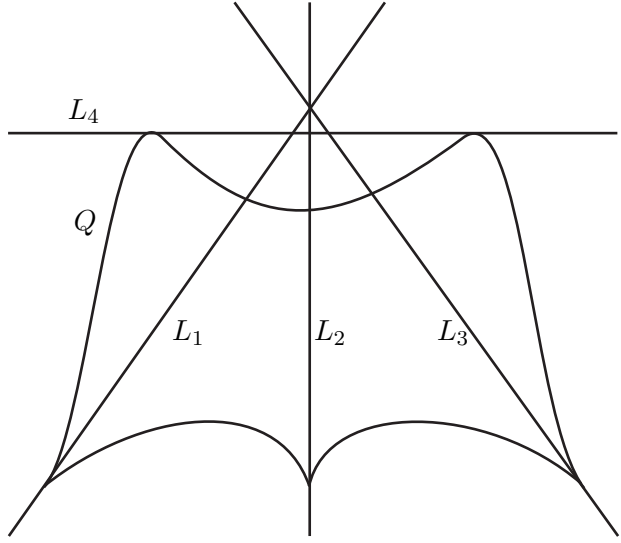}
             \put(11,50){$Q$}
             \put(50, 32){$L_2$}
             \put(10, 68){$L_4$} 
              \put(27,32){$L_1$}
              \put(70,32){$L_3$}
                \end{overpic}
        \end{center}
                 \caption{The Steiner quartic, its bitangent and the 3 tangents to its cusps.}
        \label{fig:fig03}
    \end{figure}

The $2$ points where $L_4$ is tangent to $Q$ are $A_3$ singularities of $\left\lceil \Delta\right\rceil $. The singularity of $\left\lceil \Delta\right\rceil $ at the cusp $Q$ have type $E_7$. The three lines $L_1,\dots,L_3$ meet at a unique point, giving a $D_4$ singularity. The singularities of the curve $\left\lceil \Delta\right\rceil $
are $6A_{1}+2A_{3}+D_{4}+3E_{7}$. We have:
\[ c_{1}^{2}(\mathcal{X})=(-3+(1-\frac{1}{2})8)^{2}=1.\]
Moreover: 
 \[c_{2}(\mathcal{X})=3-\frac{1}{2}(e(Q\setminus S)+\sum_{i}e(L_{i}\setminus S))-\sum_{p\in S}1-\frac{1}{\beta(p)},\] 
where $S$ is the set of singularities of $\left\lceil \Delta\right\rceil $. Therefore:\[
c_{2}(\mathcal{X})=3-\frac{1}{2}(2-8+3(2-4)+(2-5))-12+\frac{6}{4}+\frac{2}{8}+\frac{1}{16}+\frac{3}{96}=\frac{11}{32},\]
and $c_{1}^{2}(\mathcal{X})-c_{2}(\mathcal{X})=\frac{21}{32}>0$. By Theorem \ref{segre}, $\mathcal{X}$ is quasi-hyperbolic.
The desingularisation of the degree $2$ cover ramified over $\left\lceil \Delta\right\rceil $
is a Horikawa surface with $\chi=4$.
\end{proof}

\subsubsection{Second construction: a pencil of quasi-hyperbolic Horikawa surfaces.}
\begin{prop}
There exists a pencil of quasi-hyperbolic orbifold Horikawa surfaces whose minimal resolutions have $\chi=4$.
\end{prop}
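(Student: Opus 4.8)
The plan is to exhibit an explicit pencil $\{B_t\}_{t\in\bP^1}$ of octic curves on $\bP^2$, each carrying the $ADE$ configuration $6A_1+2A_3+D_4+3E_7$ of the Steiner octic of Proposition \ref{pro:Horikawa double octic}, and to set $\cX_t:=(\bP^2,\tfrac{1}{2}B_t)$; the double covers of $\bP^2$ branched along the $B_t$ then form a pencil of Horikawa surfaces of type $(1)$, all with $\chi=4$. The construction I would use deforms the Steiner picture. Fix three general points $p_1,p_2,p_3\in\bP^2$; the tricuspidal quartics with cusps exactly at $p_1,p_2,p_3$ form a two-dimensional family (a single $\mathrm{PGL}_3$-orbit, fibred over the choice of the three cusps), and fixing the cuspidal tangent $L_1$ at $p_1$ cuts out a pencil $\{Q_t\}$ inside it. Let $L_2(t),L_3(t)$ be the cuspidal tangents of $Q_t$ at $p_2,p_3$; these move with $t$, but since the three cuspidal tangents of a tricuspidal quartic are always concurrent, the lines $L_1,L_2(t),L_3(t)$ meet in a point $q(t)$. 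Finally let $L_4(t)$ be a bitangent of $Q_t$, chosen to vary algebraically with $t$, and put $B_t:=Q_t+L_1+L_2(t)+L_3(t)+L_4(t)$, an octic. For very general $t$ its singularities are the three cusps of $Q_t$, each with its cuspidal tangent attached (three $E_7$), the point $q(t)$ (a $D_4$), the two tangency points of $L_4(t)$ with $Q_t$ (two $A_3$), and the remaining transverse crossings among the five curves (six $A_1$): exactly $6A_1+2A_3+D_4+3E_7$, with no worse singularity.

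Granting this, the remainder is the computation of Proposition \ref{pro:Horikawa double octic} carried out verbatim. The orbifold first Chern number depends only on the degree of the branch curve, so $c_1^2(\cX_t)=(-3+(1-\tfrac{1}{2})8)^{2}=1$. For the second Chern number I would apply the orbifold Gauss--Bonnet formula of \S\ref{SubsectionChernClasses} together with the values $\beta(p)$ from Lemma \ref{Beta} at the $A_1,A_3,D_4,E_7$ points, all lying on branches of multiplicity $2$, so that $\beta=4,\,8,\,16,\,96$ respectively; this reproduces $c_2(\cX_t)=\tfrac{11}{32}$ and hence $c_1^2(\cX_t)-c_2(\cX_t)=\tfrac{21}{32}>0$ for very general $t$. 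By Theorem \ref{segre} each such $\cX_t$ is quasi-hyperbolic, and since $B_t$ is an octic with at most $ADE$ singularities, the double cover of $\bP^2$ branched along it is a Horikawa surface of type $(1)$ with $\chi=4$. Thus the $\cX_t$ form the desired pencil.

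The substance of the argument lies not in this bookkeeping but in the geometry of the pencil. First, one must see that it is genuine, i.e. that the corresponding Horikawa surfaces are not all isomorphic, equivalently that the equisingular octics do not remain inside a single $\mathrm{PGL}_3$-orbit; the equisingular stratum has expected dimension $44-(6\cdot 1+2\cdot 3+4+3\cdot 7)=7$, comfortably positive, so this should hold, but the cleanest route is to pin down a non-constant modulus along the pencil (for instance the cross-ratio of the four lines $L_1,L_2(t),L_3(t)$ together with a fixed auxiliary line through $q(t)$, or the position on $Q_t$ of the two tangency points of $L_4(t)$). Second, one must check that the singularity scheme is preserved for all but finitely many $t$: that $Q_t$ stays tricuspidal (no stray node, no cusp of higher type), that $L_4(t)$ stays an honest bitangent (not a hyperflex line, and not passing through a cusp, through $q(t)$, or through a tangency point of another line), and that no three of the five curves acquire an unexpected common point; any such degeneration would either raise a singularity above the $ADE$ list, destroying the rational-double-point property of the cover, or else alter the Euler-number count. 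I expect this local analysis of the tricuspidal pencil to be the main obstacle; once it is settled one obtains a pencil with at most finitely many excluded members, which is exactly what the statement asserts.
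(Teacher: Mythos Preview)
Your construction has a genuine gap: the pencil you build is projectively trivial, so all the Horikawa surfaces it produces are isomorphic. The tricuspidal quartic is unique up to $\mathrm{PGL}_3$, hence the tricuspidal quartics with cusps at fixed points $p_1,p_2,p_3$ form a single orbit under the $2$-dimensional torus $T$ stabilising $\{p_1,p_2,p_3\}$. Fixing the cuspidal tangent $L_1$ cuts this down to an orbit of the $1$-dimensional subtorus $T_1\subset T$ fixing that tangent direction. Thus $Q_t=g_t\cdot Q_0$ for some $g_t\in T_1$; since $L_2,L_3,L_4$ are intrinsically attached to $Q$ (the remaining cuspidal tangents and the unique bitangent), one has $B_t=g_t\cdot B_0$ for every $t$. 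The ``moduli'' you propose to track (cross-ratios of the cuspidal tangents, position of the bitangent contacts) are therefore constant along the family. Your own dimension count already signals the problem: the expected dimension of the equisingular stratum is $7$, which is \emph{less} than $\dim\mathrm{PGL}_3=8$, so one should expect no continuous moduli for octics with the full configuration $6A_1+2A_3+D_4+3E_7$.

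The paper proceeds differently: it keeps $Q,L_1,L_2,L_4$ fixed and replaces the third cuspidal tangent $L_3$ by a generic line $L_3'$ through the concurrence point $q=L_1\cap L_2$. This sacrifices one $E_7$ (the third cusp of $Q$ is no longer hit by its tangent line), and the resulting octic has singularities $9A_1+2A_3+D_4+2E_7$; the point is that $L_3'$ now carries a genuine modulus, since the stabiliser of the rigid part $Q+L_1+L_2+L_4$ is finite. One then recomputes $c_2(\mathcal{X})=\tfrac{3}{4}$ (rather than $\tfrac{11}{32}$), still obtaining $c_1^2(\mathcal{X})-c_2(\mathcal{X})=\tfrac{1}{4}>0$, and concludes by Theorem~\ref{segre}. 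In short, to obtain an honest pencil one must relax the singularity scheme enough that the equisingular stratum outgrows $\mathrm{PGL}_3$; your attempt to preserve all three $E_7$ points cannot do this.
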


\begin{proof}
Let us consider the curve in Proposition
\ref{pro:Horikawa double octic}, and replace the line $L_3$ by a generic line $L_3 '$ going through the intersection
point of $L_1$ and $L_2$. 


That gives
a degree $8$ curve $\Delta$ with singularities \[
9A_{1}+2A_{3}+D_{4}+2E_{7}\]
For which $c_{2}(\mathcal{X})=\frac{3}{4}$ and  $c_{1}^{2}(\mathcal{X}) - c_{2}(\mathcal{X})>0$. By Theorem \ref{segre}, $\mathcal{X}$ is quasi-hyperbolic.
\end{proof}

\subsection{A family of quasi-hyperbolic double covers branched over a degree $10$ curve.}

Let us prove:

\begin{prop}
There exists a $4$ dimensional family of quasi-hyperbolic Horikawa
orbifolds whose minimal resolutions have $\chi=7$.
\end{prop}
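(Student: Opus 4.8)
The plan is to mimic the constructions already carried out for $\chi=2k-1$ and $\chi=4$, namely: exhibit an explicit divisor $\Delta$ on $\mathbb{P}^2$ (depending on parameters giving a $4$-dimensional family) such that $(\mathbb{P}^2,\Delta)$ is a Horikawa orbifold of type $(2)$, i.e. a double cover branched over a degree $10$ curve with at most $ADE$ singularities, and such that the orbifold second Segre number $c_1^2(\mathcal{X})-c_2(\mathcal{X})$ is positive. Once positivity is established, Theorem \ref{segre} immediately gives quasi-hyperbolicity, and the condition $\chi=7$ is automatic from Horikawa's classification (Theorem \ref{HorikawaClassif}) since a double cover of $\mathbb{P}^2$ branched along a degree $10$ curve has $\chi=7$.

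Concretely, I would take $\Delta=\tfrac12\,\lceil\Delta\rceil$ where $\lceil\Delta\rceil$ is a union of lines (and possibly a conic) in $\mathbb{P}^2$ of total degree $10$, arranged so as to produce many high-multiplicity points — the $E_7$, $D_4$, $A_3$ points, which in the tables of Lemma \ref{Beta} have small $\beta(p)$ relative to their contribution to $c_2(X)$, are exactly what pushes $c_2(\mathcal{X})$ down. For instance one can start from a configuration analogous to the Steiner-quartic picture of Proposition \ref{pro:Horikawa double octic} but of degree $10$: a curve with several cusps whose cuspidal tangents are included in $\lceil\Delta\rceil$ (creating $E_7$'s), plus concurrent lines (creating $D_4$'s), with the remaining freedom — the choice of the extra generic lines through prescribed points, together with the moduli of the cuspidal/quartic part — yielding a $4$-parameter family. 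Then compute: $c_1^2(\mathcal{X})=(K_{\mathbb{P}^2}+\Delta)^2=(-3+\tfrac12\cdot 10)^2=4$, and $c_2(\mathcal{X})=3-\tfrac12\big(\sum_i e(D_i\setminus S)\big)-\sum_{p\in S}\big(1-\tfrac1{\beta(p)}\big)$ using the orbifold Gauss–Bonnet formula of subsection \ref{SubsectionChernClasses} together with Lemma \ref{Beta} for the $\beta(p)$'s. One needs the resulting $c_2(\mathcal{X})$ to be strictly less than $4$; with enough $E_7$ points (each contributing roughly $-1+\tfrac{3}{96}$, while a smooth bitangency $A_3$ gives about $-2+\tfrac14$ etc.) this is easily arranged.

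The steps, in order, are: (i) write down the explicit curve configuration of degree $10$ and verify it has only $ADE$ singularities, so that $(\mathbb{P}^2,\Delta)$ is indeed an orbifold (Example \ref{klt} / the normal-crossing blow-up analysis used earlier); (ii) identify each singular point's type and the multiplicities of the branches through it, then read off $\beta(p)$ from Lemma \ref{Beta}; (iii) compute $e(D_i\setminus S)$ for each component and assemble $c_2(\mathcal{X})$; (iv) check $c_1^2(\mathcal{X})-c_2(\mathcal{X})>0$; (v) count the moduli of the configuration to see it forms a $4$-dimensional family, and invoke Theorem \ref{segre} and Theorem \ref{HorikawaClassif}(2) to conclude. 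The main obstacle, I expect, is step (i)–(v) bookkeeping for \emph{four} free parameters at once: one must choose the configuration flexibly enough that a $4$-dimensional deformation survives (moving lines and moving tangency points) while still keeping all singularities of $ADE$ type and keeping enough $E_7$/$D_4$ points to maintain $c_2(\mathcal{X})<4$ throughout the family. A convenient way to guarantee this is to build on the pencil construction just given for $\chi=4$: take the degree $8$ octic configurations there and add two further generic lines through suitably chosen points, so that the two new lines contribute a controlled number of nodes and a controlled number of $D_4$'s, keeping the Segre number positive while the positions of the added lines (and one tangency) give the extra parameters.

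\begin{proof}
One constructs $\Delta=\tfrac12\lceil\Delta\rceil$ on $\mathbb{P}^2$ with $\lceil\Delta\rceil$ a degree $10$ curve having only $ADE$ singularities, chosen in a $4$-parameter family and arranged so that $\lceil\Delta\rceil$ has several $E_7$ and $D_4$ points (obtained, as in the octic constructions above, from cuspidal tangents and from concurrent lines) together with the remaining intersections being nodes or $A_3$-points. By Horikawa's classification (Theorem \ref{HorikawaClassif}), the double cover of $\mathbb{P}^2$ branched over $\lceil\Delta\rceil$ has a resolution which is a Horikawa surface with $\chi=7$, so $(\mathbb{P}^2,\Delta)$ is a Horikawa orbifold $\mathcal{X}$. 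One has $c_1^2(\mathcal{X})=(-3+\tfrac12\cdot 10)^2=4$, and by the orbifold Gauss–Bonnet formula of subsection \ref{SubsectionChernClasses} together with the values of $\beta(p)$ given by Lemma \ref{Beta},
$$c_2(\mathcal{X})=3-\tfrac12\Big(\sum_i e(D_i\setminus S)\Big)-\sum_{p\in S}\Big(1-\tfrac1{\beta(p)}\Big)<4,$$
the strict inequality being forced by the contributions of the $E_7$ and $D_4$ points. Hence $c_1^2(\mathcal{X})-c_2(\mathcal{X})>0$, and since the positivity is an open condition it persists over the whole $4$-dimensional family. By Theorem \ref{segre} each such $\mathcal{X}$ is quasi-hyperbolic.
\end{proof}
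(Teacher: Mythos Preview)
Your strategy is exactly the paper's: start from the Steiner-quartic octic of Proposition \ref{pro:Horikawa double octic} and adjoin two further lines to obtain a degree $10$ branch curve, then compute the orbifold Chern numbers and invoke Theorem \ref{segre}. However, your written proof stops short of actually carrying this out, and two points need to be made concrete.

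First, the paper adds the two lines in \emph{general position}, not ``through suitably chosen points'' creating further $D_4$'s. This is simpler and is what makes the dimension count clean: the moduli of an unordered pair of lines in $\mathbb{P}^2$ is $4$-dimensional, and that is the whole family. Your suggestion of constraining the lines to pass through special points would cut down the parameter count and complicate the singularity list.

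Second, the assertion ``$c_2(\mathcal{X})<4$, the strict inequality being forced by the $E_7$ and $D_4$ points'' is not a proof. You must enumerate the singularities and run the Gauss--Bonnet computation. With the two added generic lines, each meets the original octic in $8$ transverse points and they meet each other once, so the singularity list becomes
\[
23A_1 + 2A_3 + D_4 + 3E_7,
\]
and the components of $\lceil\Delta\rceil$ now have more punctures (e.g.\ the quartic loses $8$ more points, each original line loses $2$ more, the new lines lose $9$ each). Plugging into the formula of subsection \ref{SubsectionChernClasses} with the $\beta(p)$ values from Lemma \ref{Beta} ($\beta=4$ for $A_1$, $8$ for $A_3$, $16$ for $D_4$, $96$ for $E_7$, all branches having multiplicity $2$) gives $c_2(\mathcal{X})=\tfrac{83}{32}$, hence $c_1^2(\mathcal{X})-c_2(\mathcal{X})=\tfrac{45}{32}>0$. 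Without this explicit calculation your argument is only a plausibility claim; the paper does precisely this computation.
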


\begin{proof}
Let  $C$ be the degree
$10$ curve that is the union of the degree $8$ curve in Proposition
\ref{pro:Horikawa double octic} and two lines in general position.
It has singularities:\[
23A_{1}+2A_{3}+d_{4}+3E_{7}.\]
The orbifold Chern classes of $\mathcal{X}=(\mathbb{P}^{2},C)$
are :\[
c_{1}^{2}(\mathcal{X})=(-3+\frac{10}{2})^{2}=4\]
and \[
c_{2}(\mathcal{X})=3-\frac{1}{2}(2-16+3(2-6)+(2-7)+2(2-9))-29+\frac{23}{4}+\frac{2}{8}+\frac{1}{16}+\frac{3}{96}=\frac{83}{32},\]
thus : $c_{1}^{2}(\mathcal{X})-c_{2}(\mathcal{X})>0$ and by Theorem \ref{segre}, $\mathcal{X}$ is quasi-hyperbolic. The moduli of $2$ lines in $\mathbb{P}^{2}$ is $4$ dimensional.
\end{proof}

\newpage
\noindent
Xavier Roulleau,\\ 
{\tt roulleau@math.ist.utl.pt}\\
D\'epartemento de Mathematica,\\
Instituto Superior T\'ecnico,\\ 
Avenida Rovisco Pais\\
1049-001 Lisboa\\
Portugal

\bigskip
\noindent
Erwan Rousseau,\\
{\tt erwan.rousseau@cmi.univ-mrs.fr}\\
Laboratoire d'Analyse, Topologie, Probabilit\'es\\
Universit\'e d'Aix-Marseille et CNRS\\
39, rue Fr\'ed\'eric Joliot-Curie\\
13453 Marseille Cedex 13\\
France

\end{document}